\documentclass[11pt]{article}
\usepackage{amsmath,amsthm, amssymb, amsfonts,latexsym,amscd}
\usepackage[dvips]{graphicx}
\usepackage{tikz}
\usetikzlibrary{patterns}
\newtheorem{theorem}{Theorem}[section]
\newtheorem{corollary}[theorem]{Corollary}
\newtheorem{lemma}[theorem]{Lemma}

\setlength{\textwidth}{6.3in}
\setlength{\textheight}{9.3in}
\setlength{\topmargin}{-0.01in}
\setlength{\headsep}{0pt}
\setlength{\headheight}{0pt}
\setlength{\oddsidemargin}{0pt}
\setlength{\evensidemargin}{0pt}

\begin{document}
\title{\bf The core index of a graph}
\author{Dinesh Pandey\footnote{Supported by UGC Fellowship scheme (Sr. No. 2061641145), Government of India} \and Kamal Lochan Patra}
\date{}
\maketitle

\begin{abstract}
For a graph $G,$ we denote the number of connected subgraphs of $G$ by $F(G)$. For a tree $T$, $F(T)$ has been studied extensively and it has been observed that $F(T)$ has a reverse correlation with Wiener index of $T$. Based on that, we call $F(G),$  the core index of $G$.

In this paper, we characterize the graphs which extremize the core index among all graphs on $n$ vertices with $k\geq 0$ connected components. We extend our study of core index to unicyclic graphs and connected graphs with fixed number of pendant vertices. We obtained the unicyclic graphs which extremize the core index over all unicyclic graphs on $n$ vertices. The graphs which extremize the core index among all unicyclic graphs with fixed girth are also obtained. Among all connected graphs on $n$ vertices with fixed number of pendant vertices, the graph which minimizes and the graph which maximizes the core index are characterized.\\

\noindent {\bf Key words:} Tree; Unicyclic graph; Girth; Subtree core; Wiener index \\
{\bf AMS subject classification.} 05C05; 05C07; 05C30; 05C35
\end{abstract}

\section{Introduction}
Throughout this paper, graphs are finite, simple and undirected. Let $G$ be a connected graph with vertex set $V=V(G)$ and edge set $E=E(G).$ We denote by $d(v)$ the degree of a vertex $v\in V$. A tree is a connected acyclic graph. A binary tree is a tree $T$ such that every vertex of $T$ has degree $1$ or $3.$ A vertex of degree one is called a {\it pendant} vertex of $G$(it is referred as a leaf if $G$ is a tree). For $u,v\in V,$  the {\it distance} between $u$ and $v$ in $G$, denoted by $d_G(u,v)$, is the number of edges on the shortest path connecting $u$ and $v$. For $v\in V,$ the distance of $v$, denoted by $d'_G(v)$, is defined as 
$d'_G(v)=\underset{u\in V}{\sum}d_G(v,u).$ 

The {\it Wiener index} of $G$, denoted by $W(G),$ is defined as the sum of distances of all unordered pairs of vertices of $G$. So, $W(G)=\frac{1}{2}\underset{v\in V}{\sum}d'_G(v).$ 

Let $\mathbb{N}$ be the set of natural numbers. For a given connected graph $G$, let $f_G:V\rightarrow\mathbb{N}$ be the function defined by $v\mapsto f_G(v)$, where $f_G(v)$ is the number of connected subgraphs of $G$ containing $v$.  The \textit{subgraph core} of $G$, denoted by $S_c(G),$ is defined as the set of vertices maximizing $f_G(v).$ The concept of subgraph core of a graph is first defined by Szekely and Wang in \cite{sw} for trees only. They called it as subtree core and proved some interesting results on $S_c(G),$ when $G$ is a tree. For a tree $T$, they proved that the function $f_T$ is strictly concave in the following sense(see the proof of Theorem 9.1 of \cite{sw}) 
\begin{lemma}\cite{sw}\label{concave}
If $u,v,w$ are three vertices of a tree $T$ with $\{u,v\},\{v,w\}\in E(T)$, then $2f_T(v)-f_T(u)-f_T(w)>0.$
\end{lemma}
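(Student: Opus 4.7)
The plan is to write each of $f_T(u), f_T(v), f_T(w)$ as a product expansion over the branches of $T$ at $v$, and then simplify the combination $2f_T(v)-f_T(u)-f_T(w)$ algebraically. Since $T$ is a tree, a ``connected subgraph'' is the same as a subtree, and every subtree containing $v$ is specified independently on each branch at $v$.

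Concretely, for each neighbor $y$ of $v$ let $T_y$ denote the component of $T-v$ containing $y$, and let $a_y$ denote the number of subtrees of $T_y$ containing $y$; note $a_y\ge 1$, since $\{y\}$ itself is such a subtree. The first step is to observe the product formula
$$f_T(v)=\prod_{y\in N(v)}(1+a_y),$$
coming from the fact that at each neighbor $y$ of $v$, a subtree containing $v$ either avoids $y$ (one choice) or extends into $T_y$ via a subtree of $T_y$ containing $y$ ($a_y$ choices), and these choices are independent.

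Next I would derive analogous formulas for $f_T(u)$ and $f_T(w)$ by splitting according to whether $v$ lies in the subtree. A subtree of $T$ containing $u$ is either a subtree of $T_u$ containing $u$ (and not $v$), giving $a_u$ choices, or it contains both $u$ and $v$, in which case one chooses a subtree of $T_u$ containing $u$ together with independent choices on the remaining branches of $v$. This yields
$$f_T(u)=a_u\Bigl(1+\prod_{y\in N(v)\setminus\{u\}}(1+a_y)\Bigr),\qquad f_T(w)=a_w\Bigl(1+\prod_{y\in N(v)\setminus\{w\}}(1+a_y)\Bigr).$$

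Finally, setting $P:=\prod_{y\in N(v)\setminus\{u,w\}}(1+a_y)\ge 1$ (an empty product if $d(v)=2$), substitute into $2f_T(v)-f_T(u)-f_T(w)$ and expand. The bracket $2(1+a_u)(1+a_w)-a_u(1+a_w)-a_w(1+a_u)$ simplifies to $2+a_u+a_w$, giving
$$2f_T(v)-f_T(u)-f_T(w)=P\,(2+a_u+a_w)-a_u-a_w\ge (2+a_u+a_w)-a_u-a_w=2>0.$$
There is no serious obstacle; the only mildly delicate point is the short algebraic simplification in the last step, and the key structural input is the independence of branch choices at $v$, which is exactly what the tree structure guarantees.
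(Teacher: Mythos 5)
Your proof is correct. Note that the paper does not actually supply a proof of this lemma---it is quoted from Sz\'{e}kely and Wang \cite{sw} (extracted from their proof of Theorem 9.1)---so there is no in-paper argument to compare against; your branch-decomposition at $v$ is the standard way to establish this concavity. The computation checks out: with $P=\prod_{y\in N(v)\setminus\{u,w\}}(1+a_y)\ge 1$ one gets $2f_T(v)-f_T(u)-f_T(w)=P(2+a_u+a_w)-a_u-a_w\ge 2>0$, and the independence of branch choices is justified because a connected subgraph of a tree is determined by its (connected) vertex set.
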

Using Lemma \ref{concave}, Szekely and Wang proved the following result(\cite[Theorem 9.1]{sw}).

\begin{theorem}\label{thm-score}(\cite{sw},Theorem 9.1)
The subgraph core of a tree consists of either one vertex or two adjacent vertices.
\end{theorem}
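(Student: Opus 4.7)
The plan is to derive Theorem \ref{thm-score} directly from the strict concavity property supplied by Lemma \ref{concave}. Set $M = \max_{v \in V(T)} f_T(v)$, so that by definition $S_c(T) = \{v \in V(T) : f_T(v) = M\}$. I would first show that any two distinct vertices of $S_c(T)$ must be adjacent in $T$, and then use the fact that a tree contains no triangle to conclude $|S_c(T)| \leq 2$.

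For the first (and main) step, suppose towards a contradiction that there exist $x, y \in S_c(T)$ with $d_T(x,y) = k \geq 2$. Let $P : x = v_0, v_1, \ldots, v_k = y$ be the unique $x$--$y$ path in $T$ and write $a_i = f_T(v_i)$. Since $\{v_{i-1}, v_i\}, \{v_i, v_{i+1}\} \in E(T)$ for every $1 \leq i \leq k-1$, Lemma \ref{concave} gives $2a_i > a_{i-1} + a_{i+1}$, i.e.\ the successive differences $a_{i+1} - a_i$ form a strictly decreasing sequence in $i$. Because $a_0 = a_k = M$, the values $a_0, a_1, \ldots, a_k$ form a strictly concave sequence with equal endpoints, and one then derives $a_i > M$ for every interior $1 \leq i \leq k-1$. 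For $k = 2$ this is immediate, as $2a_1 > a_0 + a_2 = 2M$ gives $a_1 > M$; for $k \geq 3$ the same conclusion follows either by a short induction on $k$ or by noting that the piecewise-linear extension of $(a_i)$ to $[0,k]$ is strictly concave and therefore lies strictly above the chord joining $(0, M)$ and $(k, M)$ on the open interval. Any of these yields a vertex $v_i$ with $f_T(v_i) > M$, contradicting the definition of $M$.

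Having established that any two vertices of $S_c(T)$ are adjacent, I would rule out $|S_c(T)| \geq 3$ as follows. If three distinct vertices $x, y, z$ all lie in $S_c(T)$, then by the previous step all three pairs $xy$, $yz$, $xz$ are edges of $T$, producing a triangle, contradicting that $T$ is acyclic. Hence $|S_c(T)| \leq 2$, and when $|S_c(T)| = 2$ its two vertices are adjacent, which is exactly the statement of the theorem.

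The only nontrivial point in this plan is the passage from the pointwise concavity inequality of Lemma \ref{concave} at consecutive triples to the global statement that $f_T$ restricted to a path attains a value strictly greater than its equal endpoints somewhere in the interior; this is the step I expect to have to spell out most carefully, and it is what makes the concavity lemma do the real work.
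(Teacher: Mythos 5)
Your proposal is correct and follows exactly the route the paper indicates: the paper does not reprove this result but cites Sz\'{e}kely and Wang, noting that it follows from the strict concavity in Lemma \ref{concave}, which is precisely the engine of your argument (strictly decreasing differences along the path forcing an interior value above the equal endpoints, plus acyclicity to exclude three pairwise adjacent maximizers). No gaps; the interior-value step you flag as needing care is handled correctly by the decreasing-differences observation.
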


We denote the number of connected subgraphs of $G$ by $F(G)$. For a disconnected graph $H$  with connected components $H_1,H_2,\ldots, H_k$, we have $F(H)=\underset{i=1}{\overset{k}{\sum}}F(H_i).$  In \cite{sw}, the authors have first proved some extremization(both minimization and maximization) results on the number of subtrees of trees. Following is the first result in this regard.

\begin{theorem}(\cite{sw}, Theorem 3.1)\label{tree-F}
Among all trees on $n$ vertices, The number of subtrees is maximized by the star $K_{1,n-1}$ and minimized by the path $P_n.$ Moreover, if $T$ is a tree on $n$ vertices then ${n+1 \choose 2}\leq F(T)\leq 2^{n-1}+n-1$ with left and right equalities happen for $P_n$ and $K_{1,n-1}$, respectively.
\end{theorem}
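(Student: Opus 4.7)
The plan is to prove both extremal bounds by a single induction on $n$, anchored on a standard leaf-removal decomposition. Given a tree $T$ on $n\ge 2$ vertices, pick any leaf $v$ and let $u$ be its unique neighbor. Every subtree of $T$ either avoids $v$, equals $\{v\}$, or contains the edge $uv$ and hence contains $u$; the last family corresponds bijectively to subtrees of $T-v$ containing $u$, so
\[
F(T) \;=\; F(T-v) \;+\; 1 \;+\; f_{T-v}(u).
\]

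The second ingredient is the pair of bounds
\[
m \;\le\; f_{T'}(u) \;\le\; 2^{m-1}
\]
valid for any vertex $u$ in any tree $T'$ on $m$ vertices. The lower bound holds because, for each $w\in V(T')$, the unique $u$-$w$ path is a subtree containing $u$; equality forces that no other subtree contains $u$, which happens only when $T'$ is a path and $u$ is one of its endpoints. The upper bound holds because a subtree containing $u$ is determined by its vertex set, a subset of $V(T')$ containing $u$; equality requires that $\{u,w\}$ induces a subtree for every $w$, i.e.\ $T'=K_{1,m-1}$ with $u$ the center.

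Combining these with the inductive estimates $\binom{n}{2}\le F(T-v)\le 2^{n-2}+n-2$ gives
\[
\binom{n+1}{2} \;\le\; F(T) \;\le\; 2^{n-1}+n-1.
\]
For equality on the lower side, $F(T)=\binom{n+1}{2}$ forces $T-v=P_{n-1}$ by induction and $f_{T-v}(u)=n-1$; by the equality case above, $u$ must be an endpoint of $P_{n-1}$, so $T=P_n$. On the upper side, equality forces $T-v=K_{1,n-2}$ and $f_{T-v}(u)=2^{n-2}$; by the other equality case, $u$ must be the center of $K_{1,n-2}$, so $T=K_{1,n-1}$. The base cases $n=1,2$ are trivial to verify directly.

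The one mildly delicate point, and the main obstacle, is precisely the equality analysis: the inductive hypothesis only identifies the extremal $T-v$, so one still has to locate $u$ inside that extremal tree using the marginal bound on $f_{T-v}(u)$. This is exactly why the two inequalities on $f_{T'}(u)$ must be proved \emph{together with} their equality characterizations; once those are in hand, the induction closes cleanly and uniformly for both extremes.
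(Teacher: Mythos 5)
Your proof is correct. The leaf-removal identity $F(T)=F(T-v)+1+f_{T-v}(u)$ is right (a subtree meeting $v$ nontrivially must use the edge $uv$), the two marginal bounds $m\le f_{T'}(u)\le 2^{m-1}$ are proved with their exact equality cases ($u$ an endpoint of a path, resp.\ the center of a star), and the induction then closes both inequalities and both uniqueness claims simultaneously. This is, however, a genuinely different route from the one the paper takes: the paper imports Theorem~\ref{tree-F} from Sz\'ekely and Wang and, when it re-derives the two extremal statements as corollaries, it does so by \emph{local perturbation} arguments rather than induction --- Lemma~\ref{effect-1} (contracting a bridge and adding a pendant vertex strictly increases $F$) shows any tree other than the star can be improved upward, and Lemma~\ref{effect-3} (edge grafting strictly decreases $F$) shows any tree other than the path can be pushed downward. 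The trade-off is clear: the perturbation lemmas identify the extremal trees but do not by themselves yield the numerical bounds, which must then be computed separately for $P_n$ and $K_{1,n-1}$; on the other hand, those same lemmas are reused throughout the paper for unicyclic graphs, fixed girth, and fixed numbers of pendant vertices, which a bare induction on leaf deletion would not support. Your argument is more self-contained and delivers the explicit bounds $\binom{n+1}{2}$ and $2^{n-1}+n-1$ together with uniqueness in a single pass, and you correctly flag that the delicate point is locating $u$ inside the extremal $T-v$ via the equality cases of the marginal bounds.
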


Since then a lot has been studied for the number of subtrees of trees(see \cite{yan,sw1,kir,zha1,sli,zha,sil,zha2,xia,che}). It is also observed that there is a reverse correlation between number of subtrees and Wiener index of trees. Based on that, for any graph $G,$ we call the number $F(G)$ as the \textit{core index} of $G.$ The problem of studying the core index of trees has received much attention. Many results have been obtained so far related to the core index of trees. Following are some extremization results associated with core index of trees. 

Szekely and Wang in \cite{sw}, obtained the extremal tree which minimizes the core index among all binary trees with $k$ leaves. The extremal tree which maximizes the core index among all binary trees with $k$ leaves is obtained in \cite{sw1}. The extremization results on the core index of trees with a given degree sequence are studied in \cite{zha} and \cite{zha2}. In \cite{zha}, the authors have also obtained the extremal trees which maximizes the core index among all trees on $n$ vertices with fixed independence number(matching number). Yan and Yeh in \cite{yan}, obtained the extremal tree which minimizes the core index among all trees on $n$ vertices with fixed maximal degree. The extremal tree which maximizes the core index among all trees on $n$ vertices with fixed maximal degree is determined by Kirk and Wang in \cite{kir}. In \cite{yan} and \cite{che}, the authors have studied the core index of trees on $n$ vertices with fixed diameter. Some more extremization results on the core index of trees by fixing different graph theoretic constraints are obtained by  Li and Wang in \cite{sli}. In this article we concentrate on the core index of arbitrary graphs.
 
In  Section 2, we study the extremization of the core index of graphs on $n$ vertices. In Section 3, we examine the effect on the core index of graphs obtained by different graph perturbations. In Section 4, the core index of unicyclic graphs are studied. In the last Section, we discuss about the core index of connected graphs on $n$ vertices with $k$ pendant vertices.

\section{Graphs on $n$ vertices}

The main goal of this section is to extremize the core index among all graphs on $n$ vertices. Let $h_k$ be the number of connected graphs on $k$ vertices. Then $h_k$ can be obtained by the recurrence relation $k2^{{k \choose 2 }}=\underset{i}{\sum}{k \choose i }ih_i2^{{k-i \choose 2}}$ (see Theorem 3.10.1 of \cite{wil}). So, $h_1=1,h_2=1,h_3=4,\cdots$ and the sequence $h_k,\; k\geq 2$ is strictly increasing.

\begin{theorem}\label{con-F}
Among all connected graphs on n vertices, the core index is maximized by the complete graph $K_n$ and minimized by the path $P_n.$ Moreover, if $G$ is a connected graph on $n$ vertices then ${n+1 \choose 2}\leq F(G)\leq M,$ where $M=\underset{i=1}{\overset{n}{\sum}}{n \choose i}h_i.$ 
\end{theorem}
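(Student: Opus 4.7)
The plan separates into an upper bound and a lower bound, with the extremal graphs dropping out of the equality analysis in each case.

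For the upper bound, I would partition the connected subgraphs of $G$ by their vertex set. For each non-empty $S\subseteq V(G)$ with $|S|=i$, a connected subgraph of $G$ with vertex set exactly $S$ is the same thing as a connected spanning subgraph of the induced subgraph $G[S]$. The number of such spanning subgraphs is at most the total number $h_i$ of connected labeled graphs on $i$ vertices, with equality iff $E(G[S])=E(K_i)$. Summing gives
\[
F(G)=\sum_{i=1}^{n}\ \sum_{\substack{S\subseteq V(G)\\ |S|=i}} \#\{\text{connected spanning subgraphs of }G[S]\}\ \le\ \sum_{i=1}^{n}\binom{n}{i}h_i=M.
\]
Equality throughout forces $G[S]=K_i$ for every $S$, i.e.\ $G=K_n$; conversely the same computation shows $F(K_n)=M$.

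For the lower bound, the strategy is to descend to a spanning tree. Fix any spanning tree $T$ of $G$. Every subtree of $T$ is, in particular, a connected subgraph of $G$, so $F(G)\ge F(T)$. Theorem \ref{tree-F} then gives $F(T)\ge \binom{n+1}{2}$, with equality iff $T=P_n$, yielding $F(G)\ge \binom{n+1}{2}$.

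To pin down the equality case, suppose $F(G)=\binom{n+1}{2}$. Then both inequalities $F(G)\ge F(T)\ge \binom{n+1}{2}$ must be equalities; the latter forces $T=P_n$. If $G$ had any edge $e=uv$ outside $T$, then the subgraph with vertex set $\{u,v\}$ and edge set $\{e\}$ would be a connected subgraph of $G$ that is \emph{not} a subtree of $T$ (as $e\notin E(T)$), giving $F(G)\ge F(T)+1$, a contradiction. Hence $G=T=P_n$. I don't foresee a genuine obstacle: the only point that needs a little care is this strictness argument that upgrades $F(G)\ge F(T)$ whenever $G$ has a non-tree edge, which is handled by exhibiting that one explicit extra connected subgraph.
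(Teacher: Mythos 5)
Your proof is correct and follows essentially the same route as the paper: the lower bound descends to a spanning tree and invokes Theorem \ref{tree-F}, and the upper bound amounts to the same count $F(K_n)=\sum_{i=1}^{n}\binom{n}{i}h_i$, with your vertex-set partition being a slightly more explicit version of the paper's one-line ``$G$ is a proper subgraph of $K_n$, so $F(G)<F(K_n)$.'' Your explicit treatment of the equality cases (in particular the extra non-tree edge giving $F(G)\ge F(T)+1$) is a welcome bit of added care, but not a different argument.
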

\begin{proof}
Let $G$ be a connected graph on n vertices. If G is not complete then G is a proper subgraph of $K_n$. So,  $F(G)<F(K_n)$. If $G$ is not a tree then $G$ contains atleast one cycle. Delete edges from cycles of $G$ so that it becomes a tree, say $T$. So $T$ is a proper subgraph of $G$ and hence $F(T)<F(G)$. But by Theorem \ref{tree-F}, $F(P_{n})\leq F(T)$ with equality happens if $T$ is a path and also $F(P_n)={n+1 \choose 2}$. 

 For $1\leq i \leq n,$ let $S_i$ be the set of all connected subgraphs of $K_n$ on $i$ vertices. Then $|S_i|={n \choose i}h_i.$ So, $F(K_n)=\underset{i=1}{\overset{n}{\sum}}{n \choose i}h_i.$ This completes the proof.
\end{proof}

Let $G$ be a graph with $n$ vertices. Let $u$ and $v$ be two non-adjacent vertices of $G.$ If $G'=G\cup \{u,v\}$ then  $F(G)<F(G').$ This leads to the following: among all graphs on $n$ vertices, the complete graph $K_n$ maximizes the core index and the graph $\overline{K_n}$(the compliment of $K_n$) minmizes the core index. Furthermore, if $G$ is a graph on $n$ vertices, then $n\leq F(G)\leq M=\underset{i=1}{\overset{n}{\sum}}{n \choose i}h_i.$

We now consider the problem of extremizing the core index of graphs on $n$ vertices with $k$ connected components. Let $G_1$ and $G_2$ be two graphs with disjoint vertex set $V_1$ and $V_2$ and edge set $E_1$and $E_2$ respectively. The union $G_1\cup G_2$ is the graph with vertex set $V_1\cup V_2$ and edge set $E_1\cup E_2.$ We denote the union of $k$ copies of the graph $G$ by $kG.$

\begin{lemma}\label{con-l1}
For $2 \leq l \leq m$, $F(K_l \cup K_m)<F(K_{l-1} \cup K_{m+1})$ .
\end{lemma}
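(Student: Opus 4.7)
The plan is to exploit the fact that since $K_l \cup K_m$ has no edges between the two cliques, every connected subgraph lies entirely in one of the two components. Hence
\[
F(K_l \cup K_m) = F(K_l) + F(K_m), \qquad F(K_{l-1} \cup K_{m+1}) = F(K_{l-1}) + F(K_{m+1}),
\]
and the desired inequality is equivalent to
\[
F(K_{m+1}) - F(K_m) > F(K_l) - F(K_{l-1}).
\]
So the problem reduces to showing that the discrete ``derivative'' $\Delta(n) := F(K_{n+1}) - F(K_n)$ is strictly increasing in $n$, because the hypothesis $2 \le l \le m$ gives $m \ge l > l-1$.

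To establish monotonicity of $\Delta$, I would use the explicit formula $F(K_n)=\sum_{i=1}^{n}\binom{n}{i}h_i$ from Theorem~\ref{con-F} together with Pascal's identity $\binom{n+1}{i}-\binom{n}{i}=\binom{n}{i-1}$ to get
\[
\Delta(n) = \sum_{i=1}^{n+1}\binom{n}{i-1}h_i \;=\; \sum_{j=0}^{n}\binom{n}{j}h_{j+1}.
\]
Applying Pascal's identity once more to $\Delta(n+1)-\Delta(n)$ yields $\sum_{j\ge 1}\binom{n}{j-1}h_{j+1}$, which is strictly positive (for instance the $j=1$ term contributes $h_2=1$). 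Thus $\Delta$ is strictly increasing, and the claim follows.

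An equivalent, perhaps slightly slicker, route is to write the desired difference as a single sum and telescope: by the identity above,
\[
F(K_{l-1}\cup K_{m+1}) - F(K_l\cup K_m) \;=\; \sum_{i=1}^{\max(l,m+1)} h_i\Bigl[\binom{m}{i-1}-\binom{l-1}{i-1}\Bigr].
\]
Since $m \ge l > l-1$, each summand is nonnegative and the $i=2$ term equals $h_2\,(m-l+1) \ge 1 > 0$, so the total is strictly positive. I do not anticipate a real obstacle here; the only point requiring mild care is checking that at least one term contributes strictly (hence the explicit use of the $i=2$ term, where $h_2=1$). This avoids relying on the monotonicity of the sequence $(h_k)_{k\ge 2}$ beyond the trivially verified fact that $h_i\ge 1$ for every $i\ge 1$.
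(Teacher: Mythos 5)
Your proof is correct and follows essentially the same route as the paper's: both reduce via the component decomposition $F(K_a\cup K_b)=F(K_a)+F(K_b)$ to a termwise comparison of $\sum_i \binom{m}{i-1}h_i$ with $\sum_i \binom{l-1}{i-1}h_i$. Your direct use of Pascal's identity is in fact a cleaner execution than the paper's detour through the permutation counts $P(n,r)$, and your telescoped variant also dispenses with the paper's separate appeal to the monotonicity of $(h_k)$ to control the $h_{m+1}-h_l$ term.
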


\begin{proof}
We denote the number of $r$ permutations  of $n$ elements by $P(n,r).$ By the definition of core index, $F(K_l \cup K_m)=F(K_l)+F(K_m)$ and by Theorem \ref{con-F}, $F(K_l)=\sum_{i=1}^{l} {l \choose i}h_i$, where $h_i$ is the number of connected graphs on $i$ vertices. Then
\begin{align*}
&F(K_{l-1} \cup K_{m+1})-F(K_l \cup K_m)\\&=F(K_{l-1})+F(K_{m+1})-F(K_l)-F(K_m)\\
&=\sum_{i=1}^{l-1} {l-1 \choose i}h_i+\sum_{i=1}^{m+1} {m+1 \choose i}h_i-\sum_{i=1}^{l} {l \choose i}h_i-\sum_{i=1}^{m} {m \choose i}h_i\\
&=\sum_{i=1}^{m} \left({m+1 \choose i}-{m \choose i}\right)h_i +\sum_{i=1}^{l-1} \left({l-1 \choose i}-{l \choose i}\right)h_i+(h_{m+1}-h_l)\\
& > \sum_{i=1}^{m} \frac{i}{m-i+1} {m\choose i}h_i - \sum_{i=1}^{l-1}\frac{i}{l-i}{l-1\choose i}h_i\\
&=\sum_{i=1}^{m} \frac{1}{(i-1)!} P(m,i-1) h_i -\sum_{i=1}^{l-1} \frac{1}{(i-1)!}P(l-1,i-1) h_i\\
&> \sum_{i=1}^{l-1}\frac{1}{(i-1)!}(P(m,i-1)-P(l-1,i-1) )h_i\\
&> 0.
\end{align*}
\end{proof}

\begin{theorem}\label{con-T1}
Let $G$ be a graph on $n$ vertices with $k$ connected components. Then $$F(G)\leq (k-1)+\sum_{i=1}^{n-k+1} {n-k+1 \choose i}h_i$$ and the equality happens if and only if   $G=(k-1)K_1\cup K_{n-k+1}.$
\end{theorem}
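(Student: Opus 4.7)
My strategy is to reduce the problem to disjoint unions of complete graphs, and then to identify the optimal sequence of component sizes by iterated use of Lemma \ref{con-l1}. Each step of the reduction strictly increases the core index, so it will also yield uniqueness of the extremal graph.

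First I would show that every maximizer must have all components complete. Suppose some connected component $H_i$ of $G$ is not complete, and let $u,v$ be non-adjacent vertices of $H_i$. Form $G' = G + uv$; this leaves the number of components unchanged. Every connected subgraph of $G$ remains a connected subgraph of $G'$, and the new $2$-vertex subgraph $(\{u,v\},\{uv\})$ is a connected subgraph of $G'$ that did not exist in $G$. Hence $F(G) < F(G')$, and iterating we may assume $G = K_{n_1}\cup K_{n_2}\cup\cdots\cup K_{n_k}$ with $n_1+n_2+\cdots+n_k=n$ and each $n_i\geq 1$.

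Next, after arranging the parts in non-decreasing order, if $(n_1,\ldots,n_k)\neq (1,\ldots,1,n-k+1)$, then some part $n_j$ with $j<k$ satisfies $n_j\geq 2$. Applying Lemma \ref{con-l1} with $l=n_j$ and $m=n_k$ (so that $2\leq l\leq m$), while leaving the remaining $k-2$ components unchanged, produces a graph with the same number of components and strictly larger core index. Each such step increases the size of the largest component by one; since this size cannot exceed $n-k+1$, the procedure terminates after finitely many steps at the unique configuration $G^\star=(k-1)K_1\cup K_{n-k+1}$.

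Combining the two stages yields $F(G)<F(G^\star)$ whenever $G\neq G^\star$. Using Theorem \ref{con-F} together with $F(K_1)=1$,
\[
F(G^\star)=(k-1)+\sum_{i=1}^{n-k+1}\binom{n-k+1}{i}h_i,
\]
which matches the claimed bound and pins down the equality case. I do not anticipate a genuine obstacle: the two monotonicity moves are already in hand, the first from the definition of $F$ and the second from Lemma \ref{con-l1}. The only point needing a little care is verifying that the vertex-transfer process terminates at a unique configuration, which follows from the strict monotonicity of the largest component size under each such move.
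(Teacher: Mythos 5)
Your proposal is correct and follows essentially the same route as the paper: reduce to the case where every component is complete (the paper cites Theorem \ref{con-F} for this, which is the same edge-addition argument you give), then iterate Lemma \ref{con-l1} to shift vertices into a single large complete component. You supply slightly more detail on termination and on the evaluation of $F\bigl((k-1)K_1\cup K_{n-k+1}\bigr)$, but the substance is identical.
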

 
\begin{proof}
Let $G_1,G_2,\ldots, G_k$ be the $k$ components of $G$ with $|V(G_i)|=l_i$ for $i=1,\ldots,k.$ Then by Theorem \ref{con-F}, $F(G)=F(G_1\cup \cdots \cup G_k)\leq F(K_{l_1}\cup \cdots \cup K_{l_k})$ and the equality happens if and only if $G_i=K_i;\ 1\leq i\leq k.$ Now the proof follows from Lemma \ref{con-l1}.
\end{proof} 

\begin{lemma}\label{con-l2}
For $2 \leq l \leq m$, $F(P_l \cup P_m)<F(P_{l-1}\cup P_{m+1})$.
\end{lemma}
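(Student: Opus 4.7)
The plan is to reduce the lemma to a direct combinatorial identity, by invoking the closed form for $F(P_n)$ already established in Theorem \ref{tree-F}. Since $F$ of a disconnected graph is the sum of $F$ over its components, both sides of the desired inequality become sums of binomial coefficients of the form $\binom{n+1}{2}$, and the claimed strict inequality reduces to the strict inequality $m+1>l$ which is guaranteed by the hypothesis $l\leq m$.

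Concretely, I would first write
\begin{equation*}
F(P_l\cup P_m)=F(P_l)+F(P_m)=\binom{l+1}{2}+\binom{m+1}{2},
\end{equation*}
and similarly
\begin{equation*}
F(P_{l-1}\cup P_{m+1})=\binom{l}{2}+\binom{m+2}{2}.
\end{equation*}
Then I would form the difference
\begin{equation*}
F(P_{l-1}\cup P_{m+1})-F(P_l\cup P_m)=\left(\binom{m+2}{2}-\binom{m+1}{2}\right)-\left(\binom{l+1}{2}-\binom{l}{2}\right)=(m+1)-l.
\end{equation*}
Finally, since $l\leq m$, we have $(m+1)-l\geq 1>0$, which proves the lemma.

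There is essentially no obstacle here beyond recognizing that Theorem \ref{tree-F} already supplies the exact value $F(P_n)=\binom{n+1}{2}$; once this is in hand, the proof is a one-line binomial computation. The only mild thing to observe is that strictness of the inequality is exactly what the hypothesis $l\leq m$ (rather than $l<m$) is tuned to deliver, since the gap equals $m+1-l\geq 1$.
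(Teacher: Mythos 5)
Your proposal is correct and is essentially identical to the paper's own proof: both invoke the closed form $F(P_n)=\binom{n+1}{2}$ from Theorem \ref{tree-F}, use additivity of $F$ over components, and compute the difference as $m+1-l>0$. Nothing further is needed.
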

\begin{proof} 
By Thorem \ref{tree-F}, $F(P_m)=\frac{m(m+1)}{2}.$ So We have
\begin{align*}
&F(P_{l-1}\cup P_{m+1})-F(P_{l}\cup P_{m})\\
&=F(P_{l-1})+F(P_{m+1})-F(P_{l})-F(P_{m})\\
&=\frac{(l-1)l}{2}+\frac{(m+1)(m+2)}{2}-\frac{l(l+1)}{2}-\frac{m(m+1)}{2}\\
&=m+1-l>0.
\end{align*}

\end{proof}

\begin{theorem}\label{con-T2}
For positive integers $n$ and $k$, let $n=kq+r$ where  $q,r \in \mathbb{Z} $ and $0\leq r < k$ also let $G$ be a graph on $n$ vertices with $k$ connected components. Then $$F(G)\geq r(q+1)+\frac{kq(q+1)}{2}$$ and the equality happens if and only if   $G=rP_{q+1}\cup(k-r)P_q.$ 
\end{theorem}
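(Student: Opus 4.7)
The plan is to reduce the problem in two stages: first, reduce every component to a path, and then reduce the sequence of path lengths to the most balanced partition.

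First, let $G_1,\ldots,G_k$ denote the components of $G$ with $|V(G_i)|=l_i$. Since $F$ is additive over disjoint unions and Theorem \ref{con-F} tells us that $P_{l_i}$ minimizes the core index among all connected graphs on $l_i$ vertices, we have
\[
F(G)=\sum_{i=1}^k F(G_i)\geq \sum_{i=1}^k F(P_{l_i})=F(P_{l_1}\cup\cdots\cup P_{l_k}),
\]
with equality if and only if each $G_i$ is itself a path. This lets us restrict attention to disjoint unions of paths whose orders partition $n$ into $k$ parts.

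Second, I would show that among all such partitions of $n$ into $k$ positive parts, the minimum of $F$ is attained uniquely by the balanced partition consisting of $r$ parts of size $q+1$ and $k-r$ parts of size $q$. Suppose $(l_1,\ldots,l_k)$ is any other partition. Then there exist indices $i,j$ with $l_i\leq l_j-2$, so that $l_i+1\leq l_j-1$. Applying Lemma \ref{con-l2} with the roles $l=l_i+1$ and $m=l_j-1$ gives
\[
F(P_{l_i+1}\cup P_{l_j-1}) < F(P_{l_i}\cup P_{l_j}).
\]
Since $F$ is additive over components, replacing the two components $P_{l_i}$ and $P_{l_j}$ by $P_{l_i+1}$ and $P_{l_j-1}$ strictly decreases $F(G)$. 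Iterating this balancing step must terminate (for instance, because $\sum_i l_i^2$ strictly decreases at each step and is bounded below), and it can only terminate when no two parts differ by $2$ or more, i.e., at the partition $rP_{q+1}\cup(k-r)P_q$.

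Finally I would compute the value at this extremal configuration using $F(P_m)=\binom{m+1}{2}$ from Theorem \ref{tree-F}:
\[
F(rP_{q+1}\cup(k-r)P_q)=r\cdot\frac{(q+1)(q+2)}{2}+(k-r)\cdot\frac{q(q+1)}{2}=\frac{q+1}{2}\bigl(2r+kq\bigr)=r(q+1)+\frac{kq(q+1)}{2}.
\]
The main obstacle is nothing more than being careful about the termination and uniqueness of the balancing step in the second stage; everything else reduces to applying Theorem \ref{con-F} and Lemma \ref{con-l2} and a straightforward arithmetic identity.
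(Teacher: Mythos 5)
Your proposal is correct and follows essentially the same two-stage reduction as the paper: first replace each component by a path via Theorem \ref{con-F}, then balance the path lengths via Lemma \ref{con-l2}, and finally compute $F(rP_{q+1}\cup(k-r)P_q)$. The only difference is that you make the termination and uniqueness of the balancing iteration explicit (via the decreasing quantity $\sum_i l_i^2$), which the paper leaves implicit.
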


\begin{proof}
Let $G_1,G_2,\ldots, G_k$ be the $k$ components of $G$ with $|V(G_i)|=l_i$ for $i=1,\ldots,k.$ Then by Theorem \ref{con-F}, $F(G)=F(G_1\cup \cdots \cup G_k)\geq F(P_{l_1}\cup \cdots \cup P_{l_k})$ and the equality happens if and only if $G_i=P_{l_i},\; 1\leq i\leq k.$ By Lemma \ref{con-l2}, $F(G)\geq F(rP_{q+1}\cup(k-r)P_q)$ and the equality happens if and only if   $G=rP_{q+1}\cup(k-r)P_q.$ 

Also we have $F(rP_{q+1}\cup(k-r)P_q)=r\frac{(q+2)(q+1)}{2}+(k-r)\frac{q(q+1)}{2}=r(q+1)+\frac{kq(q+1)}{2}.$ This completes the proof
\end{proof}

Theorem \ref{con-T2} also proves that among all acyclic graphs on $n$ vertices with $k$ components, the core index is minimized by the graph $rP_{q+1}\cup(k-r)P_q.$ Next we will prove the maximization case for the acyclic graphs.
\begin{lemma}\label{con-l3}
For $2 \leq l \leq m$, $F(K_{1,l-1} \cup K_{1,m+1})>F(K_{1,l }\cup K_{1,m})$ 
\end{lemma}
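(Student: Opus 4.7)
The plan is to mirror the approach used for Lemma \ref{con-l2}, replacing the closed-form count for paths by the closed-form count for stars. By Theorem \ref{tree-F}, for any $n \geq 1$ we have
\[
F(K_{1,n-1}) = 2^{n-1} + n - 1,
\]
since the star on $n$ vertices is the tree that maximizes the number of subtrees. Using the additivity of $F$ over connected components, both sides of the asserted inequality can therefore be written as explicit expressions in $l$ and $m$, and the verification reduces to an elementary algebraic comparison.

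Concretely, I would first write
\[
F(K_{1,l-1} \cup K_{1,m+1}) - F(K_{1,l} \cup K_{1,m})
= F(K_{1,l-1}) + F(K_{1,m+1}) - F(K_{1,l}) - F(K_{1,m}).
\]
Next, substituting the formula $F(K_{1,n-1}) = 2^{n-1} + n - 1$ into each of the four terms, the linear parts in $l$ and $m$ cancel, and the expression collapses to
\[
2^{m-1} - 2^{l-2}.
\]
Finally, since $2 \leq l \leq m$, we have $m-1 \geq l-1 > l-2$, so $2^{m-1} > 2^{l-2}$, which gives the strict inequality.

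There is no real obstacle here; the only point requiring care is tracking the shifted indices ($l-1$, $l$, $m$, $m+1$) in the exponents of $2$ and making sure the linear terms cancel correctly. The structure of the argument is identical to that of Lemma \ref{con-l2}, with the arithmetic formula $m(m+1)/2$ for paths replaced by the exponential formula $2^{m-1} + m - 1$ for stars.
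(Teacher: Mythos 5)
Your approach is exactly the paper's: additivity of $F$ over components plus the closed form from Theorem \ref{tree-F}, and the argument is sound. One small correction: $K_{1,j}$ has $j+1$ vertices, so $F(K_{1,j})=2^{j}+j$; substituting correctly, the difference is $2^{m}-2^{l-1}$, not $2^{m-1}-2^{l-2}$ (your value is exactly half the true one, coming from a uniform off-by-one in the exponents). Since $l\leq m$ still gives $2^{m}>2^{l-1}$, the sign and hence the conclusion are unaffected.
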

\begin{proof}
By Theorem \ref{tree-F}, $F(K_{1,m-1})=2^{m-1}+m-1.$ So We have
\begin{align*}
&F(K_{1,l-1} \cup K_{1,m+1})-F(K_{1,l} \cup K_{1,m})\\
&=F(K_{1,l-1})+F(K_{1,m+1})-F(K_{1,l})-F(K_{1,m})\\
&=2^{l-1}+l-1+2^{m+1}+m+1-2^l-l-2^m-m\\
&=2^m-2^{l-1}>0
\end{align*}

\end{proof}

\begin{theorem}\label{con-T3}
Let $G$ be a acyclic graph on $n$ vertices with $k$ connected components. Then $$F(G)\leq 2^{n-k}+n-1$$ and the equality happens if and only if   $G=(k-1)K_1\cup K_{1, n-k}.$
\end{theorem}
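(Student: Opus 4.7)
The plan is to mirror the structure of the proofs of Theorems~\ref{con-T1} and \ref{con-T2}: first reduce each component to its extremal tree, then shift mass between components using Lemma~\ref{con-l3} to reach the extremal multigraph configuration.

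Let $G_1,\ldots,G_k$ be the components of $G$, with $|V(G_i)|=l_i$ and $l_1+\cdots+l_k=n$, $l_i\ge 1$. Since $G$ is acyclic, every $G_i$ is a tree. By Theorem~\ref{tree-F},
\[
F(G_i)\le F(K_{1,l_i-1})=2^{l_i-1}+l_i-1,
\]
with equality iff $G_i=K_{1,l_i-1}$ (noting that when $l_i=1$ this degenerates to the single-vertex case, contributing $1$). Summing over $i$ reduces the problem to maximizing $\sum_{i=1}^{k} F(K_{1,l_i-1})$ over compositions $l_1+\cdots+l_k=n$ with $l_i\ge 1$.

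Next, I would use Lemma~\ref{con-l3} to argue that this sum is maximized precisely when one part equals $n-k+1$ and the remaining $k-1$ parts equal $1$. Order the parts $l_1\le l_2\le\cdots\le l_k$. If $l_{k-1}\ge 2$, then applying Lemma~\ref{con-l3} to the pair $(l_{k-1},l_k)$ (replacing them by $(l_{k-1}-1,l_k+1)$) strictly increases $F(K_{1,l_{k-1}-1}\cup K_{1,l_k-1})$, hence strictly increases the total. Iterating this process with the smallest part of size at least $2$ terminates, after finitely many strict increases, at the distribution $(1,1,\ldots,1,n-k+1)$. Thus the extremal graph is uniquely $G=(k-1)K_1\cup K_{1,n-k}$.

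Finally, a direct calculation gives
\[
F\bigl((k-1)K_1\cup K_{1,n-k}\bigr)=(k-1)\cdot 1+\bigl(2^{n-k}+(n-k+1)-1\bigr)=2^{n-k}+n-1,
\]
establishing the stated bound and equality case. The only delicate step is the iterative mass-transfer argument: one must be explicit that at each stage the ``smallest part of size $\ge 2$'' exists until only one part remains large, and that Lemma~\ref{con-l3} strictly dominates each intermediate comparison. The rest is a routine combination of Theorems~\ref{tree-F} and \ref{con-F} together with an arithmetic simplification.
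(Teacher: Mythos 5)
Your proposal is correct and follows essentially the same route as the paper: reduce each component to a star via Theorem~\ref{tree-F}, concentrate the vertices into one star by repeated application of Lemma~\ref{con-l3}, and compute $F((k-1)K_1\cup K_{1,n-k})$ directly. Your explicit description of the iterative mass-transfer step is slightly more detailed than the paper's one-line invocation of Lemma~\ref{con-l3}, but the argument is the same.
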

\begin{proof}
Let $G_1,G_2,\ldots, G_k$ be the $k$ components of $G$ with $|V(G_i)|=l_i$ for $i=1,\ldots,k.$ Then by Theorem \ref{tree-F}, $F(G)=F(G_1\cup \cdots \cup G_k)\leq F(K_{1,l_1-1}\cup \cdots \cup K_{1,l_k-1})$ and the equality happens if and only if $G_i=K_{1,l_i-1};\ 1\leq i\leq k.$ By Lemma \ref{con-l3}, $F(G)\leq F((k-1)K_1\cup K_{1, n-k})$ and the equality happens if and only if   $G=(k-1)K_1\cup K_{1, n-k}.$ 

Also we have $F((k-1)K_1\cup K_{1, n-k})=k-1+2^{n-k}+n-k=2^{n-k}+n-1.$ This completes the proof.

\end{proof}
 
\section{Effect on the core index by some graph perturbations}

In this section, we will prove some lemmas which are very useful in proving the main results of this paper. Let $G$ be a graph and let $v_1,v_2,\ldots, v_k \in V(G).$ We denote by $f_G(v_1,v_2,\ldots,v_k)$ the number of connected subgraphs of $G$ containing $v_1,v_2,\ldots,v_k.$

\begin{lemma}\label{effect-1}
Let $G$ be a connected graph and let $\{u,v \}$ be a bridge in $G$ such that neither  $u$ nor $v$ be a pendent vertex. Let $G'$ be the graph obtained from $G$ by identifying the vertices $u$ and $v$ and adding a pendent vertex $y$ at the identified vertex of $G'.$ Then $F(G') > F(G)$.
\end{lemma}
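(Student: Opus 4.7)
The plan is to decompose both $G$ and $G'$ along the bridge (respectively, at the new cut vertex), write $F(G)$ and $F(G')$ in closed form using a few natural counts, and then show that their difference factors positively.

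First I would let $G_u$ and $G_v$ be the two components of $G-\{u,v\}$, containing $u$ and $v$ respectively, so that $G$ is the disjoint union $G_u\sqcup G_v$ with the bridge $\{u,v\}$ added back. Every connected subgraph of $G$ falls into exactly one of three classes: entirely in $G_u$, entirely in $G_v$, or containing the bridge edge. Subgraphs of the third kind correspond bijectively to pairs $(A,B)$, where $A$ is a connected subgraph of $G_u$ containing $u$ and $B$ is a connected subgraph of $G_v$ containing $v$, since the bridge is the only edge crossing the cut. This yields
\[
F(G) \;=\; F(G_u) + F(G_v) + f_{G_u}(u)\,f_{G_v}(v).
\]

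Next I would view $G'$ as the amalgamation of $G_u$ and $G_v$ at the vertex $w$ (the identification of $u$ with $v$) together with the pendant edge $\{w,y\}$. I would partition the connected subgraphs of $G'$ by whether they contain $w$. Those not containing $w$ are forced to be $\{y\}$, a connected subgraph of $G_u-u$, or a connected subgraph of $G_v-v$, contributing $F(G_u-u)+F(G_v-v)+1$. Those containing $w$ decompose uniquely into a connected subgraph on the $G_u$-side through $w$, a connected subgraph on the $G_v$-side through $w$, and an independent choice of whether to attach $y$, contributing $2f_{G_u}(u)\,f_{G_v}(v)$. Using $F(G_u-u)=F(G_u)-f_{G_u}(u)$ and its analog for $v$, subtraction then gives
\[
F(G')-F(G) \;=\; f_{G_u}(u)\,f_{G_v}(v) - f_{G_u}(u) - f_{G_v}(v) + 1 \;=\; (f_{G_u}(u)-1)(f_{G_v}(v)-1).
\]

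To close, the non-pendant hypothesis on $u$ means its only edge leaving $G_u$ is the bridge, so $u$ has a neighbor $u'$ inside $G_u$; then $\{u\}$ and $\{u,u'\}$ witness $f_{G_u}(u)\ge 2$, and symmetrically $f_{G_v}(v)\ge 2$, whence $F(G')-F(G)\ge 1>0$. The step I expect to require the most care is the bookkeeping of the second count: one must not double-count the singleton $\{w\}$ when combining pieces from the two sides, and must treat $y$ as an independent binary choice only for subgraphs that already contain $w$. Once the two closed forms are in hand, the factorization $(f_{G_u}(u)-1)(f_{G_v}(v)-1)$ makes the role of the non-pendant assumption completely transparent.
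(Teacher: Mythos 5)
Your proof is correct and follows essentially the same route as the paper: both decompose $F(G)$ along the bridge as $F(G_u)+F(G_v)+f_{G_u}(u)f_{G_v}(v)$, count the subgraphs of $G'$ through the identified vertex $w$ to arrive at $F(G')-F(G)=(f_{G_u}(u)-1)(f_{G_v}(v)-1)$, and conclude from the non-pendant hypothesis. The only cosmetic difference is that you partition the subgraphs of $G'$ by whether they contain $w$ while the paper partitions by whether they contain $y$; your explicit justification that $f_{G_u}(u),f_{G_v}(v)\geq 2$ is a small point the paper leaves implicit.
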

\begin{proof}
Let $G_1$ and $G_2$ be two subgraphs of $G$ containing $u$ and $v$ respectively, after deleting the edge  $\{ u,v \}$ from $G.$  Let $w$ be the vertex in $G'$ obtained from identifying the vertices $u$ and $v$(see Figure \ref{fig1}).\\

\vspace{0.75in}
\begin{figure}[!h]
 \includegraphics[scale=0.75]{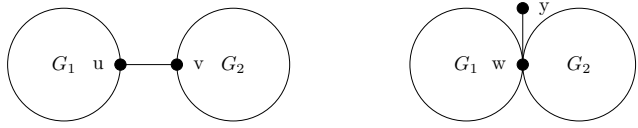}
\caption{Identifying two vertices and adding a pendant vertex there}\label{fig1}
\end{figure}

Then we have, $$F(G)=F(G_1)+F(G_2)+f_{G_1}(u)f_{G_2}(v)$$ and
\begin{align*}
F(G')&=F(G_1)+F(G_2)-1 +(f_{G_1}(w)-1)(f_{G_2}(w)-1)\\
	   &+1+(f_{G_1}(w)-1)(f_{G_2}(w)-1)+f_{G_1}(w)+f_{G_2}(w)-1\\
       &=F(G_1)+F(G_2)+2f_{G_1}(w)f_{G_2}(w)-f_{G_1}(w)-f_{G_2}(w)+1.
\end{align*}

Here $F(G_1)+F(G_2)-1 +(f_{G_1}(w)-1)(f_{G_2}(w)-1)$ counts the number of connected subgraphs of $G'$ not containing $y$ and $1+(f_{G_1}(w)-1)(f_{G_2}(w)-1)+f_{G_1}(w)+f_{G_2}(w)-1$ counts the number of connected subgraphs of $G'$ containing $y$. As $f_{G_1}(u)=f_{G_1}(w)$ and $f_{G_2}(v)=f_{G_2}(w)$, so 

$$ F(G') - F(G)=f_{G_1}(w)f_{G_2}(w)-f_{G_1}(w)-f_{G_2}(w)+1=(f_{G_1}(w)-1)(f_{G_2}(w)-1)>0.$$ This complets the proof.

\end{proof}

\begin{corollary}
Among all trees on $n$ vertices, the star $K_{1,n-1}$ has the maximum core index.
\end{corollary}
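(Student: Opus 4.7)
The plan is to use Lemma \ref{effect-1} as a transformation that strictly increases the core index and drives any tree toward $K_{1,n-1}$. Let $T$ be a tree on $n$ vertices; the cases $n\le 2$ are trivial, so assume $n\ge 3$. If $T\ne K_{1,n-1}$, then $T$ has diameter at least $3$, and taking a longest path $v_0v_1\cdots v_d$ (with $d\ge 3$) exhibits an edge $\{v_1,v_2\}$ whose endpoints both have degree at least $2$. Since every edge of a tree is a bridge, the hypotheses of Lemma \ref{effect-1} are met.

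Applying the lemma produces a graph $T'$ with $F(T')>F(T)$. I would first verify that $T'$ is again a tree on $n$ vertices: contracting a bridge in a tree yields a tree on $n-1$ vertices (uniqueness of paths in trees prevents multi-edges or new cycles), and attaching a single pendant restores the vertex count and preserves acyclicity and connectedness.

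The key bookkeeping step is to show that the operation strictly increases the number of pendant vertices. The degrees of all vertices outside $\{u,v\}$ are unchanged (their incidences with $u$ or $v$ are simply redirected to the merged vertex $w$); the merged vertex has degree $d(u)+d(v)-1\ge 3$, hence is non-pendant; and the newly attached $y$ is pendant. So exactly one pendant is gained per step.

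Finally, iterating the transformation yields a strictly increasing sequence of core indices. Because the pendant count of a tree on $n$ vertices cannot exceed $n-1$, the process must terminate, and the terminal tree has no edge with two non-pendant endpoints. For $n\ge 2$ the only such tree is $K_{1,n-1}$, so $F(T)<F(K_{1,n-1})$ whenever $T\ne K_{1,n-1}$, proving the corollary. The only real subtlety is the termination argument, which is handled cleanly by the monotone pendant count; everything else is a direct invocation of Lemma \ref{effect-1}.
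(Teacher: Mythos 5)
Your proposal is correct and is essentially the argument the paper intends: the corollary is stated immediately after Lemma \ref{effect-1} precisely because repeated application of that perturbation drives any non-star tree to $K_{1,n-1}$ while strictly increasing the core index. Your explicit bookkeeping (choosing the edge $\{v_1,v_2\}$ on a longest path, checking the result is again a tree, and terminating via the strictly increasing pendant count) just fills in details the paper leaves implicit.
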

\vspace{0.25in}
\begin{figure}[!h]
 \includegraphics[scale=0.75]{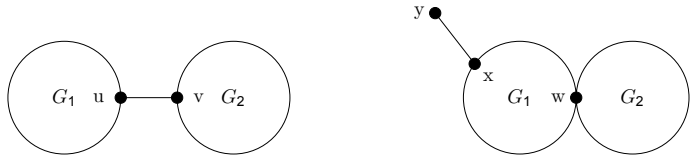}
\caption{Identifying two vertices and adding a pendant vertex}\label{fig2}
\end{figure}
\begin{lemma}\label{effect-2}
Let $G$ be a connected  graph and let $\{ u,v \}$ be a bridge in $G.$ Let $G_1$ and $G_2$ be two components of $G - \{u,v\}$  containing $u$ and $v$, respectively. Let $f_{G_1}(u) \leq f_{G_2}(v)$ and let $x\in V(G_1)$ such that $f_{G_1}(x,u)\geq 2$. Construct a graph $G'$ from $G$ by identifying the vertices $u$ and $v$ and adding a pendent vertex $y$ at $x.$ Then $F(G')>F(G)$. 
\end{lemma}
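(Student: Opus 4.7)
The plan is to mimic the bookkeeping proof of Lemma~\ref{effect-1} but keep careful track of the extra vertex $x$ at which the new pendant $y$ is attached. First I would write
$$F(G)=F(G_1)+F(G_2)+f_{G_1}(u)\,f_{G_2}(v),$$
by splitting connected subgraphs of $G$ according to whether they live entirely in $G_1$, entirely in $G_2$, or use the bridge $\{u,v\}$.

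Next, let $H$ denote the graph obtained from $G$ by identifying $u$ and $v$ into a single vertex $w$, so that $G'=H+$(pendant $y$ at $x$). Splitting the subgraphs of $G'$ according to whether they contain $y$ (and noting that any subgraph containing $y$ is either $\{y\}$ alone or is obtained by appending $y$ and the edge $xy$ to a connected subgraph of $H$ through $x$) gives
$$F(G')=F(H)+1+f_H(x).$$
Then $F(H)$ is computed by the same three-case split as for $F(G)$ (except that subgraphs through $w$ correspond to a pair $(S_1,S_2)$ of subgraphs in $G_1,G_2$ through $u,v$), and $f_H(x)$ is computed by splitting on whether the subgraph of $H$ through $x$ also contains $w$: this contributes $f_{G_1}(x)-f_{G_1}(x,u)$ in the first case and $f_{G_1}(x,u)\cdot f_{G_2}(v)$ in the second.

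Subtracting and writing $a=f_{G_1}(u)$, $b=f_{G_2}(v)$, $c=f_{G_1}(x)$, $d=f_{G_1}(x,u)$, the cancellation collapses everything to
$$F(G')-F(G)=1+(c-d)+b(d-1)-a.$$
Now the hypotheses do all the work: trivially $c\ge d$, by assumption $a\le b$, and by assumption $d\ge 2$, so
$$F(G')-F(G)\ \ge\ 1+b(d-1)-b\ =\ 1+b(d-2)\ \ge\ 1>0.$$

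The computation itself is routine; the main point of care is the identification step, where one must not double-count the subgraphs of $H$ that contain $w$, and must correctly identify $f_{G_1}(u)=f_H(w)\cap(\text{$G_1$ side})$ and analogously for $G_2$. Also worth checking at the end is that the condition $f_{G_1}(x,u)\ge 2$ is used tightly: if $d=1$ (so the only subgraph of $G_1$ through both $x$ and $u$ is some degenerate object), the inequality can fail, so that hypothesis is genuinely needed—it is exactly what ensures $b(d-2)\ge 0$.
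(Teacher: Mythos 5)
Your proof is correct and takes essentially the same route as the paper's: the same three-way decomposition of $F(G)$, the same count of subgraphs of $G'$ split by whether they contain $y$ (giving $F(H)+1+f_H(x)$ with $f_H(x)=f_{G_1}(x)+f_{G_1}(x,u)\bigl(f_{G_2}(v)-1\bigr)$), and the same exact difference $1+(c-d)+b(d-1)-a$. The only (immaterial) difference is the order in which the hypotheses $a\le b$, $c\ge d$, $d\ge 2$ are applied at the end; your closing remark that $d\ge 2$ is tight (failing when $G_1$ is a path from $x$ to $u$) is also accurate.
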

\begin{proof}
Let $w$ be the vertex in $G'$ obtained by identifying the vertices $u$ and $v$ of $G$ (see Figure \ref{fig2}). Then we have 
$$F(G)=F(G_1)+F(G_2)+f_{G_1}(u)f_{G_2}(v)$$ and
$$F(G')=F(G_1)+F(G_2)-1+(f_{G_1}(w)-1)(f_{G_2}(w)-1)+1+f_{G_1}(x)+f_{G_1}(x,w)(f_{G_2}(w)-1).$$
Here $F(G_1)+F(G_2)-1 +(f_{G_1}(w)-1)(f_{G_2}(w)-1)$ counts the number of connected subgraphs of $G'$ not containing $y$ and $1+f_{G_1}(x)+f_{G_1}(x,w)(f_{G_2}(w)-1)$ counts the number of connected subgraphs of $G'$ containing $y$. As $f_{G_1}(x,w) \geq 2$, we have 

\begin{align*}
F(G')&\geq F(G_1)+F(G_2)+(f_{G_1}(w)-1)(f_{G_2}(w)-1)+f_{G_1}(x)+2(f_{G_2}(w)-1)\\
	  &=F(G_1)+F(G_2)+f_{G_1}(w)f_{G_2}(w)-f_{G_1}(w)-f_{G_2}(w)+f_{G_1}(x)+2f_{G_2}(w)-1.
\end{align*}

Since $f_{G_1}(u)=f_{G_1}(w)$, $f_{G_2}(v)=f_{G_2}(w)$ and $f_{G_1}(u) \leq f_{G_2}(v)$, so we have
\begin{align*}
F(G')-F(G) & \geq  2f_{G_2}(w)+f_{G_1}(x)-f_{G_1}(w)-f_{G_2}(w)-1\\
& = f_{G_2}(w)-f_{G_1}(w)+f_{G_1}(x)-1\\
& >0,  \mbox{ as $f_{G_1}(x)\geq f_{G_1}(x,w)\geq 2$ }.
\end{align*}
This completes the proof.
\end{proof}

\vspace{1.25in}
\begin{figure}[!h]
 \includegraphics[scale=0.75]{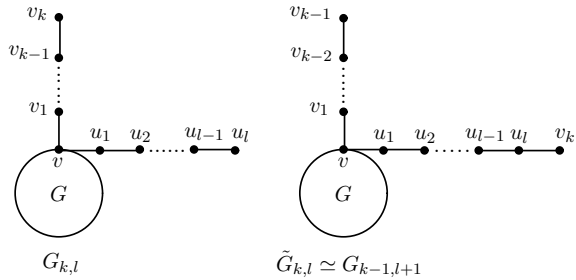}
\caption{Grafting an edge}\label{fig3}
\end{figure}

Let  $G$ be a connected graph on $n\geq 2$ vertices. Let
$v$ be a vertex of $G.$ For $l,k \geq 1,$  let $G_{k,l}$ be the
graph obtained from $G$ by attaching two new paths
$P:vv_{1}v_{2}\cdots v_{k}$ and $Q:vu_{1}u_{2}\cdots u_{l}$ of
lengths $k$ and $l$, respectively at $v$, where
$u_{1},u_{2},\ldots,u_{l}$ and $v_{1},v_{2},\ldots,v_{k}$ are
distinct new vertices. Let ${\widetilde G}_{k,l}$ be the graph
obtained by removing the edge $\{v_{k-1},v_{k}\}$ and adding the
edge $\{u_{l},v_{k}\} $ (see Figure \ref{fig3}). Observe that the
graph ${\widetilde G}_{k,l}$ is isomorphic to the graph
$G_{k-1,l+1}$. We say that ${\widetilde G}_{k,l}$ is obtained from
$G_{k,l}$ by {\em grafting} an edge.\label{graph:gkl}

We denote by $P_n$ the path on the $n$ vertices $v_1,v_2,\ldots,v_n$, where $v_1$ and $v_n$ are pendant vertices, and for $i=2,3,\ldots, n-1,$ vertex $v_i$ is adjacent to vertices $v_{i-1}$ and $v_{i+1}$. Then for $1\leq i\leq n,$ $f_{P_n}(v_i)=i(n+1-i).$ The next lemma compares the core index of $G_{k,l}$ and $G_{k-1,l+1}.$
\begin{lemma}\label{effect-3}
Let $G$ be a connected graph on $n\geq 2$ vertices and $v \in V(G)$. Let $G_{k,l}$ be the graph as defined above. If $1\leq k\leq l$ then $F(G_{k-1,l+1})<F(G_{k,l})$.
\end{lemma}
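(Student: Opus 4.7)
The plan is to derive a closed-form expression for $F(G_{k,l})$ in terms of $F(G)$, $f_G(v)$, $k$ and $l$, and then obtain the inequality by a direct subtraction that collapses to an easily signed quantity.

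First I would split the connected subgraphs of $G_{k,l}$ into those that contain $v$ and those that avoid $v$. Since $v$ is the only vertex shared by the two attached paths and the rest of $G$, a connected subgraph $S$ containing $v$ is determined by three independent choices: a connected subgraph of $G$ through $v$ (giving $f_G(v)$ options), an initial segment $\{v_1,\dots,v_i\}$ with $0 \le i \le k$ (giving $k+1$ options), and an initial segment $\{u_1,\dots,u_j\}$ with $0 \le j \le l$ (giving $l+1$ options). For the subgraphs not containing $v$, the components of $G_{k,l}-v$ are the components of $G-v$ together with the two paths $v_1 v_2 \cdots v_k$ and $u_1 u_2 \cdots u_l$, contributing $F(G)-f_G(v)$, $\binom{k+1}{2}$, and $\binom{l+1}{2}$ subgraphs respectively. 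Putting these together yields
\[
F(G_{k,l}) \;=\; f_G(v)(k+1)(l+1) + F(G) - f_G(v) + \binom{k+1}{2} + \binom{l+1}{2}.
\]

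Applying the same identity to $G_{k-1,l+1}$ and subtracting, the $F(G)-f_G(v)$ terms cancel and a short algebraic manipulation gives
\[
F(G_{k,l}) - F(G_{k-1,l+1}) \;=\; (f_G(v)-1)(l-k+1).
\]
Because $G$ is connected on $n \ge 2$ vertices, $v$ has at least one neighbor in $G$, so $f_G(v) \ge 2$; and the hypothesis $k \le l$ gives $l-k+1 \ge 1$. Hence the right-hand side is strictly positive, establishing $F(G_{k-1,l+1}) < F(G_{k,l})$.

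The only subtle point is justifying the structural claim used in the count: any connected subgraph that contains $v$ must meet each attached path in an initial segment starting at the neighbor of $v$ on that path. This is forced because every edge $\{v_{i-1},v_i\}$ is a bridge, so connectedness prevents $S$ from including $v_i$ without also including $v_{i-1},\dots,v_1$; the analogous statement holds for $Q$. Once this observation is in place, the proof reduces to verifying the two displayed identities above, both of which are routine.
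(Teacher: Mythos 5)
Your proof is correct and follows essentially the same route as the paper: both arguments decompose the count at the cut vertex $v$, use the multiplicativity of $f$ there, and arrive at the identical difference $(f_G(v)-1)(l-k+1)>0$. The only cosmetic difference is that the paper packages the two attached paths as a single path $S$ on $k+l+1$ vertices (so that $F(S)=F(S')$ kills the path-only terms immediately), whereas you carry the $\binom{k+1}{2}+\binom{l+1}{2}$ terms explicitly through the subtraction.
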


\begin{proof}

Let $P=:vv_1v_2\cdots v_k$ and $Q=:vu_1u_2\cdots u_l$ be two paths of length $k$ and $l$, $1\leq k\leq l$, respectively, attached at the vertex $v$ of $G$. Then $S=:v_k\cdots v_1vu_1\cdots u_l$ is a path in $G_{k,l}$ and $S'=:v_{k-1}\cdots v_1vu_1\cdots u_lv_k$ is a path in $G_{k-1,l+1}.$ In $G_{k,l}$, $v$ is the only common vertex between $G$ and the path $S$ and in $G_{k-1,l+1}$ also $v$ is the only common vertex between $G$ and the path $S'.$ Then we have 
$$F(G_{k,l})=F(G)+F(S)-1+(f_{G}(v)-1)(f_{S}(v)-1)$$ and 
$$F(G_{k-1,l+1})=F(G)+F(S')-1+(f_{G}(v)-1)(f_{S'}(v)-1).$$

Both $S$ and $S'$ are path on $l+k+1$ vertices. So, $F(S)=F(S'),$ and $f_S(v)=(k+1)(l+1)$ and $f_S'(v)=k(l+2)$ 
\begin{align*}
 F(G_{k,l})-F(G_{k-1,l+1})&=(f_{G}(v)-1)(f_{S}(v)-f_{S'}(v))\\
           &=(f_{G}(v)-1)((k+1)(l+1)-k(l+2))\\
           &=(f_{G}(v)-1)(l-k+1)\\
           &>0, \mbox{ as $l\geq k$ and $|V(G)|\geq 2$ }.   
\end{align*}
This completes the proof.
\end{proof}
\begin{corollary}
Among all trees on $n$ vertices, the path $P_n$ has the minimum core index. 
\end{corollary}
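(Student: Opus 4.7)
The plan is to iterate the grafting operation of Lemma~\ref{effect-3}, strictly decreasing the core index at each step, until reaching the path $P_n$.

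The key structural input is the following claim: if $T$ is a tree on $n$ vertices with $T\not\cong P_n$, then $T$ has a vertex $v$ of degree at least $3$ with at least two pendant paths attached, i.e., two paths $vv_1\cdots v_k$ and $vu_1\cdots u_l$ ($k,l\ge 1$) whose endpoints $v_k,u_l$ are leaves of $T$ and whose internal vertices $v_i,u_j$ all have degree $2$ in $T$. To prove this, let $B$ be the set of branch vertices (vertices of degree $\ge 3$) of $T$ and form the auxiliary tree $T_B$ on $B$ in which two branch vertices are adjacent if and only if the unique $T$-path between them contains no other branch vertex. Since $T\ne P_n$ we have $B\ne\emptyset$, so $T_B$ has a leaf, which we call $v$. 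Then among the $\deg_T(v)\ge 3$ components of $T-v$, at most one contains another branch vertex; the remaining (at least two) components are branchless, hence paths, and together with their edges to $v$ they give the required pendant paths.

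With $v$ located and pendant-path lengths $k\le l$, the tree $T$ is exactly $G_{k,l}$ in the notation of Lemma~\ref{effect-3}, where $G$ is obtained by keeping $v$ and everything in $T$ outside the two chosen pendant paths. Applying the lemma yields a tree $T'=G_{k-1,l+1}$ on $n$ vertices with $F(T')<F(T)$. Iterating the same argument on $T'$ (which, if $T'\not\cong P_n$, again admits a vertex with two pendant paths) produces a sequence of $n$-vertex trees with strictly decreasing core indices. Because there are only finitely many non-isomorphic trees on $n$ vertices, this sequence must terminate, and termination occurs precisely when the current tree admits no pendant-path pair of the above form, i.e., is $P_n$. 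Chaining the strict inequalities gives $F(T)>F(P_n)$, proving the corollary.

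The only non-routine step is the structural claim producing the vertex $v$; once it is in place, the proof reduces to a direct invocation of Lemma~\ref{effect-3} together with the finiteness argument for termination.
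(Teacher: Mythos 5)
Your proposal is correct and follows exactly the route the paper intends: the corollary is stated immediately after Lemma~\ref{effect-3} precisely because repeated grafting strictly decreases the core index until a path is reached. Your added structural claim (every non-path tree has a branch vertex supporting two pendant paths, which also guarantees $|V(G)|\geq 2$ when invoking the lemma) and the finiteness/termination argument correctly fill in the details the paper leaves implicit.
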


We denote by $K_{1,n-1}$ the star on the $n$ vertices $v_1,v_2,\ldots,v_n$, where $v_1,v_2,\ldots,v_{n-1}$ are pendant vertices. Then for $1\leq i\leq n-1,$ $f_{K_{1,n-1}}(v_i)=1+2^{n-2}$ and $f_{K_{1,n-1}}(v_n)=2^{n-1}.$

\begin{lemma}\label{effect-4}
Let $G$ be a connected graph on $n \geq 2$ vertices and let $u,v \in V(G).$ For $n_1,n_2\geq 0,$ let $G_{uv}(n_1,n_2)$ be the graph obtained from $G$ by attaching $n_1$ pendant vertices at $u$ and $n_2$ pendant vertices at $v.$ Let $f_{G}(u) \geq f_{G}(v).$ If $n_1,n_2\geq 1,$ then $$F(G_{uv}(n_1+n_2,0)) > F(G_{uv}(n_1,n_2).$$
\end{lemma}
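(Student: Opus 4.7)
The plan is to derive a closed-form expression for $F(G_{uv}(n_1,n_2))$ in terms of $F(G)$, $f_G(u)$, $f_G(v)$, and $f_G(u,v)$, apply it to both sides of the inequality, and then reduce everything to an elementary comparison. Write $H=G_{uv}(n_1,n_2)$, with pendants $p_1,\ldots,p_{n_1}$ attached at $u$ and $q_1,\ldots,q_{n_2}$ attached at $v$. Each new pendant has degree one in $H$, so any connected subgraph of $H$ that contains $p_i$ together with at least one other vertex must also contain $u$ and the edge $\{u,p_i\}$; analogously at $v$.

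Based on this, I would partition the connected subgraphs of $H$ into five disjoint classes according to which pendants they meet: (i) subgraphs lying entirely inside $G$, contributing $F(G)$; (ii) single-pendant subgraphs, contributing $n_1+n_2$; (iii) subgraphs containing a nonempty subset of $\{p_1,\ldots,p_{n_1}\}$, no $q_j$, together with a connected subgraph of $G$ through $u$, contributing $(2^{n_1}-1)f_G(u)$; (iv) the symmetric class at $v$, contributing $(2^{n_2}-1)f_G(v)$; and (v) subgraphs meeting both pendant-sets, whose intersection with $V(G)$ is a connected subgraph of $G$ through both $u$ and $v$, contributing $(2^{n_1}-1)(2^{n_2}-1)f_G(u,v)$. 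Specializing the resulting formula to $(n_1,n_2)$ and to $(n_1+n_2,0)$ and subtracting, the $F(G)$ and $n_1+n_2$ terms cancel, which yields
\begin{align*}
F(G_{uv}(n_1+n_2,0))-F(G_{uv}(n_1,n_2))&=(2^{n_2}-1)\bigl[2^{n_1}f_G(u)-f_G(v)-(2^{n_1}-1)f_G(u,v)\bigr]\\
&=(2^{n_2}-1)\bigl[2^{n_1}(f_G(u)-f_G(u,v))-(f_G(v)-f_G(u,v))\bigr].
\end{align*}

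Since $n_2\geq 1$, the prefactor $2^{n_2}-1$ is positive. For the bracketed quantity, I would observe that $f_G(u)-f_G(u,v)$ counts connected subgraphs of $G$ containing $u$ but not $v$, and similarly $f_G(v)-f_G(u,v)$ counts those containing $v$ but not $u$; each is at least $1$ since $\{u\}$ and $\{v\}$ are themselves connected subgraphs. The hypothesis $f_G(u)\geq f_G(v)$ gives $f_G(u)-f_G(u,v)\geq f_G(v)-f_G(u,v)\geq 1$, and $n_1\geq 1$ gives $2^{n_1}\geq 2$. Chaining these,
\[2^{n_1}\bigl(f_G(u)-f_G(u,v)\bigr)\geq 2\bigl(f_G(v)-f_G(u,v)\bigr)>f_G(v)-f_G(u,v),\]
so the bracket is strictly positive and the inequality follows.

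The main step is setting up the five-class partition carefully and checking that it is both disjoint and exhaustive; once the formula is in place, the rest is straightforward bookkeeping. The only subtle point is preserving the strict inequality at the end, which uses both $n_1\geq 1$ and the elementary but essential observation that each of $\{u\}$ and $\{v\}$ is a connected subgraph of $G$ that excludes the other vertex.
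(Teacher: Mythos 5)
Your proof is correct, and the five-class partition you set up is exactly the decomposition the paper uses (the paper phrases it as gluing stars $S_1,S_2,S_3$ onto $G$ at $u$ and $v$, but after expanding $F(S_i)$ and $f_{S_i}$ its formulas coincide term for term with yours). The only difference is the finishing step: the paper discards the positive surplus $F(S_1)-F(S_2)-F(S_3)+1$ and bounds what remains below by $0$ using $f_G(u)-1\geq f_G(u,v)$ and $f_G(u)-1\geq f_G(v)-1$, whereas you compute the difference exactly, factor out $2^{n_2}-1$, and close with $f_G(u)-f_G(u,v)\geq f_G(v)-f_G(u,v)\geq 1$ --- a slightly cleaner ending, but the same argument in substance.
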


\begin{proof}
Let $S_1$ be the star in $G_{uv}(n_1+n_2,0)$ on $n_1+n_2+1$ vertices with $u$ is the only common vertex between $S_1$ and $G$ (see Figure \ref{fig4}). Then 
$$F(G_{uv}(n_1+n_2,0))=F(G)+F(S_1)-1 +(f_{G}(u)-1)(f_{S_1}(u)-1).$$

\vspace{0.45in}
\begin{figure}[!h]
 \includegraphics[scale=0.75]{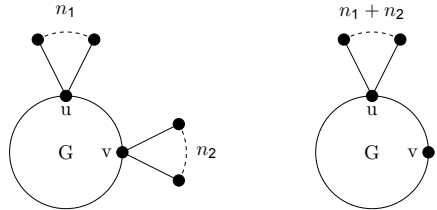}
\caption{The graphs $G_{uv}(n_1,n_2)$ and $G_{uv}(n_1+n_2,0)$ }\label{fig4}
\end{figure}

Let $S_2$ be the star in $G_{uv}(n_1,n_2)$ on $n_1+1$ vertices with $u$ is the only common vertex between $S_2$ and $G$ (see Figure \ref{fig4}) and let $S_3$ be the star in $G_{uv}(n_1,n_2)$ on $n_2+1$ vertices with $v$ is the only common vertex between $S_3$ and $G.$ Then
\begin{align*}
F(G_{uv}(n_1,n_2))&=F(G)+F(S_2)+F(S_3)-2+(f_{G}(u)-1)(f_{S_2}(u)-1)\\
    &+(f_{G}(v)-1)(f_{S_3}(v)-1)+f_{G}(u,v)(f_{S_2}(u)-1)(f_{S_3}(v)-1).
\end{align*}

Since $f_{G}(u) \geq f_{G}(v)$, so $f_{G}(u)-1 \geq f_{G}(v)-1$ and $f_{G}(u)-1 \geq f_{G}(u,v).$ Also $$F(S_1)-F(S_2)-F(S_3)+1=(2^{n_1+n_2} +n_1+n_2) - (2^{n_1}+n_1)-(2^{n_2}+n_2)+1>0$$ and
 $$f_{S_1}(u)-f_{S_2}(u)f_{S_3}(v)=2^{n_1+n_2}-2^{n_1}.2^{n_2}=0.$$
So, we have

\begin{align*}
&F(G_{uv}(n_1+n_2,0))- F(G_{uv}(n_1,n_2))\\ &=F(S_1)-F(S_2)-F(S_3)+1+(f_{G}(u)-1)(f_{S_1}(u)-1)\\
 		   &-(f_{G}(u)-1)(f_{S_2}(u)-1)-(f_{G}(v)-1)(f_{S_3}(v)-1)\\
 	       &-f_{G}(u,v)(f_{S_2}(u)-1)(f_{S_3}(v)-1)\\
 		   & > (f_{G}(u)-1)(f_{S_1}(u)-f_{S_2}(u)f_{S_3}(v))\\
 		   &= 0.
\end{align*}
This completes the proof.

\end{proof}

\begin{lemma}\label{effect-5}
Let $G$ be a connected graph on $n \geq 3$ vertices. Let $u,v \in V(G)$  such that $f_G(u,v)\geq 2.$ For $l,k\geq 1,$ let $G_{uv}^p(l,k)$ be the graph obtained from $G$ by identifying a pendant vertex of the path $P_l$ with $u$ and  identifying a pendant vertex of the path $P_k$ with $v$.  Let $f_{G}(u) \leq f_{G}(v).$ If $l,k\geq 2,$ then $$F(G_{uv}^p(l+k-1,1)) < F(G_{uv}^p(l,k)).$$
\end{lemma}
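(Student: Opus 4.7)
The plan is to derive a closed-form expression for $F(G_{uv}^p(l,k))$ by partitioning its connected subgraphs according to how they intersect the two attached paths, then specialize to $(l+k-1,1)$ and compare.

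Let $S$ denote the attached $P_l$ at $u$ (so $S$ has $l$ vertices with $u$ as one endpoint) and $T$ the attached $P_k$ at $v$. I would partition the connected subgraphs $H$ of $G_{uv}^p(l,k)$ into four classes based on whether $H$ meets $S\setminus\{u\}$ and whether $H$ meets $T\setminus\{v\}$. If $H$ meets neither, $H$ is a connected subgraph of $G$, contributing $F(G)$. If $H$ meets only $S\setminus\{u\}$, then either $H\subseteq S\setminus\{u\}$ (a subpath of an $(l-1)$-vertex path, contributing $\binom{l}{2}$), or $u\in H$, in which case $H\cap S$ is one of the $l-1$ subpaths of $S$ containing $u$ and at least one further vertex, while $H\cap V(G)$ is one of the $f_G(u)$ connected subgraphs of $G$ through $u$, contributing $(l-1)f_G(u)$. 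Symmetrically, the $T$-only class contributes $\binom{k}{2}+(k-1)f_G(v)$. Finally, if $H$ meets both tails then it must contain both $u$ and $v$, giving $(l-1)(k-1)f_G(u,v)$ subgraphs. Summing,
\begin{equation*}
F(G_{uv}^p(l,k)) = F(G) + \binom{l}{2} + \binom{k}{2} + (l-1)f_G(u) + (k-1)f_G(v) + (l-1)(k-1)f_G(u,v),
\end{equation*}
and specializing to $(l+k-1,1)$ (where the $P_1$ at $v$ adds nothing beyond $v$) collapses the formula to
\begin{equation*}
F(G_{uv}^p(l+k-1,1)) = F(G) + \binom{l+k-1}{2} + (l+k-2)f_G(u).
\end{equation*}

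Using the elementary identity $\binom{l}{2}+\binom{k}{2}-\binom{l+k-1}{2}=-(l-1)(k-1)$, the difference telescopes to
\begin{equation*}
F(G_{uv}^p(l,k)) - F(G_{uv}^p(l+k-1,1)) = (k-1)\bigl[(l-1)(f_G(u,v)-1) + (f_G(v)-f_G(u))\bigr].
\end{equation*}
The hypotheses $k\geq 2$, $l\geq 2$, and $f_G(u,v)\geq 2$ force $(l-1)(f_G(u,v)-1)\geq 1$, while $f_G(u)\leq f_G(v)$ makes the remaining summand nonnegative, so the difference is strictly positive. The main obstacle I anticipate is the careful bookkeeping in the partition: the attached paths meet $G$ only at $u$ and $v$, so one must consistently track whether these attachment vertices lie in $H$ to avoid double-counting subgraphs sitting entirely in $G$, subgraphs dangling into a single tail, and subgraphs spanning both tails simultaneously. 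Once this is set up, the rest is routine algebra, and the two hypotheses $f_G(u,v)\geq 2$ and $f_G(u)\leq f_G(v)$ show up in exactly the places needed for the final sign.
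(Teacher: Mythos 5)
Your proof is correct and follows essentially the same route as the paper: both decompose the connected subgraphs of $G_{uv}^p(l,k)$ according to how they meet the two attached paths (your four-class partition is algebraically identical to the paper's gluing formula $F(G)+F(P_l)+F(P_k)-2+(f_G(u)-1)(f_{P_l}(u)-1)+(f_G(v)-1)(f_{P_k}(v)-1)+f_G(u,v)(f_{P_l}(u)-1)(f_{P_k}(v)-1)$), and both invoke $f_G(u)\leq f_G(v)$ and $f_G(u,v)\geq 2$ at the final sign check. The only cosmetic difference is that you compute the difference exactly as $(k-1)\bigl[(l-1)(f_G(u,v)-1)+(f_G(v)-f_G(u))\bigr]$, whereas the paper discards the nonnegative term $(k-1)(f_G(v)-f_G(u))$ early and lower-bounds the difference by $(f_G(u,v)-1)(l-1)(k-1)$.
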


\begin{proof}
\vspace{0.90in}
\begin{figure}[!h]
 \includegraphics[scale=0.75]{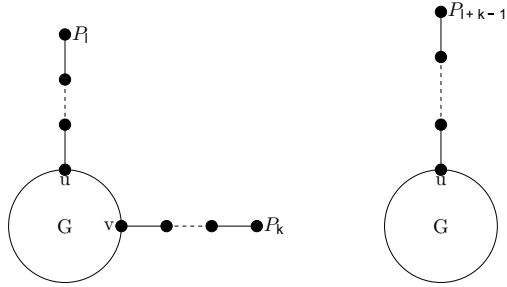}
\caption{The graphs $G_{uv}^p(l,k)$ and $G_{uv}^p(l+k-1,1)$ }\label{fig5}
\end{figure}
We have 
\begin{align*}
F(G_{uv}^p(l,k) )&=F(G)+F(P_l)+F(P_k)-2+(f_{G}(u)-1)(f_{P_l}(u)-1)\\
		&+(f_{G}(v)-1)(f_{P_k}(v)-1)+f_{G}(u,v)(f_{P_l}(u)-1)(f_{P_k}(v)-1)\\
F(G_{uv}^p(l+k-1,1) )&=F(G)+F(P_{l+k-1})-1+(f_{G}(u)-1)(f_{P_{l+k-1}}(u)-1)	
\end{align*}
 and the difference 
\begin{align*}
&F(G_{uv}^p(l,k))-F(G_{uv}^p(l+k-1,1))\\
&\geq F(P_l)+F(P_k)-F(P_{l+k-1})-1\\
		    &+(f_{G}(u)-1)(f_{P_l}(u)-1+f_{P_k}(v)-1-f_{P_{l+k-1}}(u)+1)\\
	       &+f_{G}(u,v)(f_{P_l}(u)f_{P_k}(v)-f_{P_l}(u)-f_{P_k}(v)+1)\\
	       &=l+k-lk-1+f_{G}(u,v)(lk-l-k+1)\\
	       &=(f_{G}(u,v)-1)(lk-l-k+1)\\
	       &>0, \mbox{ as $l,k\geq 2$ and $f_{G}(u,v)\geq 2$}
\end{align*}
This completes the proof.
\end{proof}

\section{Unicyclic graphs}

We denote the cycle on $n$ vertices by $C_n.$ The girth of a graph $G$ is the length of a shortest cycle in $G.$ A connected graph on $n$ vertices with $n$ edges is called unicyclic. For $n\geq 3,$ we denote by $\mathcal{U}_n$ the set of all unicyclic graphs on $n$ vertices and by $\mathcal{U}_{n,g}$ the set of all unicyclic graphs on $n$ vertices with girth $g.$ Clearly  $3\leq g\leq n$ and $\mathcal{U}_{n,g}\subseteq \mathcal{U}_n.$ If $g=n$ then $C_n$ is the only element of $\mathcal{U}_{n,n}.$ The next lemma gives the value of the core index of $C_n.$

\begin{lemma}\label{cycle-F}
Let $n\geq 3$ and let $v\in V(C_n) .$ Then $F(C_n)=n^2+1$ and $f_{C_n}(v)=2n+ {n-1\choose 2}.$  
\end{lemma}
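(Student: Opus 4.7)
The plan is a direct enumeration. First I observe that a connected subgraph of $C_n$ containing at least one edge is determined by its edge set, and the edge set must induce a connected subgraph of the cycle. In $C_n$ the only connected edge-subsets are either the full edge set (giving $C_n$ itself) or an arc of $k$ consecutive edges for some $1\le k\le n-1$ (which, as a subgraph, is a path on $k+1$ vertices). Hence every connected subgraph of $C_n$ falls into exactly one of three classes: (i) a single vertex, (ii) a path arising from an arc of $k$ consecutive edges for some $1\le k\le n-1$, or (iii) the cycle $C_n$ itself.

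For $F(C_n)$ I count each class separately. There are $n$ singletons. For each $k$ with $1\le k\le n-1$, an arc of $k$ consecutive edges is specified by its first edge, giving exactly $n$ such arcs and hence $n(n-1)$ paths in total. Adding the unique copy of $C_n$ yields
\[
F(C_n)=n+n(n-1)+1=n^{2}+1.
\]

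For $f_{C_n}(v)$ I restrict the same classification to subgraphs containing $v$. The singleton $\{v\}$ and the whole cycle each contribute $1$. An arc of $k$ consecutive edges contains $v$ iff $v$ is one of the $k+1$ vertices it covers, and since $k+1\le n$, choosing the position of $v$ among those $k+1$ vertices gives exactly $k+1$ distinct arcs through $v$. Thus
\[
f_{C_n}(v)=2+\sum_{k=1}^{n-1}(k+1)=1+\binom{n+1}{2},
\]
and a routine rewrite shows this equals $2n+\binom{n-1}{2}$.

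There is no real obstacle: the argument is pure bookkeeping. The two small points requiring care are (a) justifying that a connected edge-subset of $C_n$ is either an arc of consecutive edges or the whole cycle (immediate from the fact that $C_n$ is $2$-regular and a cycle), using $k\le n-1$ to rule out wrap-around so that arcs of length $k$ correspond bijectively to $(k+1)$-vertex paths, and (b) the final algebraic identity $1+\binom{n+1}{2}=2n+\binom{n-1}{2}$.
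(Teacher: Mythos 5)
Your proof is correct and takes essentially the same approach as the paper: both classify the connected subgraphs of $C_n$ into singletons, paths, and the whole cycle, and then enumerate directly. The only cosmetic difference is that you parametrize the paths by arc length and starting edge (and, for $f_{C_n}(v)$, by the position of $v$ within the arc), whereas the paper parametrizes them by their two endpoints and splits on whether $v$ is an endpoint; both give $n(n-1)$ paths and the same value $2n+\binom{n-1}{2}$.
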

\begin{proof}
Let $V(C_n)=\{v=v_1,v_2,\ldots,v_n\}$ and $C_n=:v_1v_2\cdots v_nv_1.$ The single vertices ${{v_1}},{v_2},\ldots,{v_n}$ are $n$ connected subgraphs of $C_n$. The cycle $C_n$ itself is one connected subgraph of $C_n$. Any other connected subgraph of $C_n$ is a path having the end vertices from ${{v_1},{v_2},\ldots,{v_n}}$. If we chose any two vertices $v_i,v_j$ from ${v_1},{v_2},\ldots,{v_n}$, it corresponds two paths, one in clockwise direction from $v_i$ to $v_j$ and other in anticlockwise direction from $v_i$ to $v_j$. So the number of such paths are $2 {n \choose 2}$. Thus we have $$F(C_n)=n+1+2{n \choose 2}=n^2+1$$

For $1\leq i<j\leq n,$  $f_{C_n}(v_i)=f_{C_n}(v_j).$ The single vertex $v_1$ is a connected subgraph of $C_n$ containing $v_1$. The cycle $C_n$ is also a connected subgraph of $C_n$ containing $v_1.$ All other connected subgraphs of $C_n$ containing $v_1$ are paths with at least two vertices. The number of paths in $C_n$ containing $v_1$ as a pendant vertex is  $2(n-1)$ and the number of paths in $C_n$ containing $v_1$ as non-pendant vertex is $n-1\choose 2$. Thus $f_{C_n}(v_1)=2n+ {n-1\choose 2}$.

\end{proof}

\begin{figure}[!h]
 \includegraphics[scale=0.75]{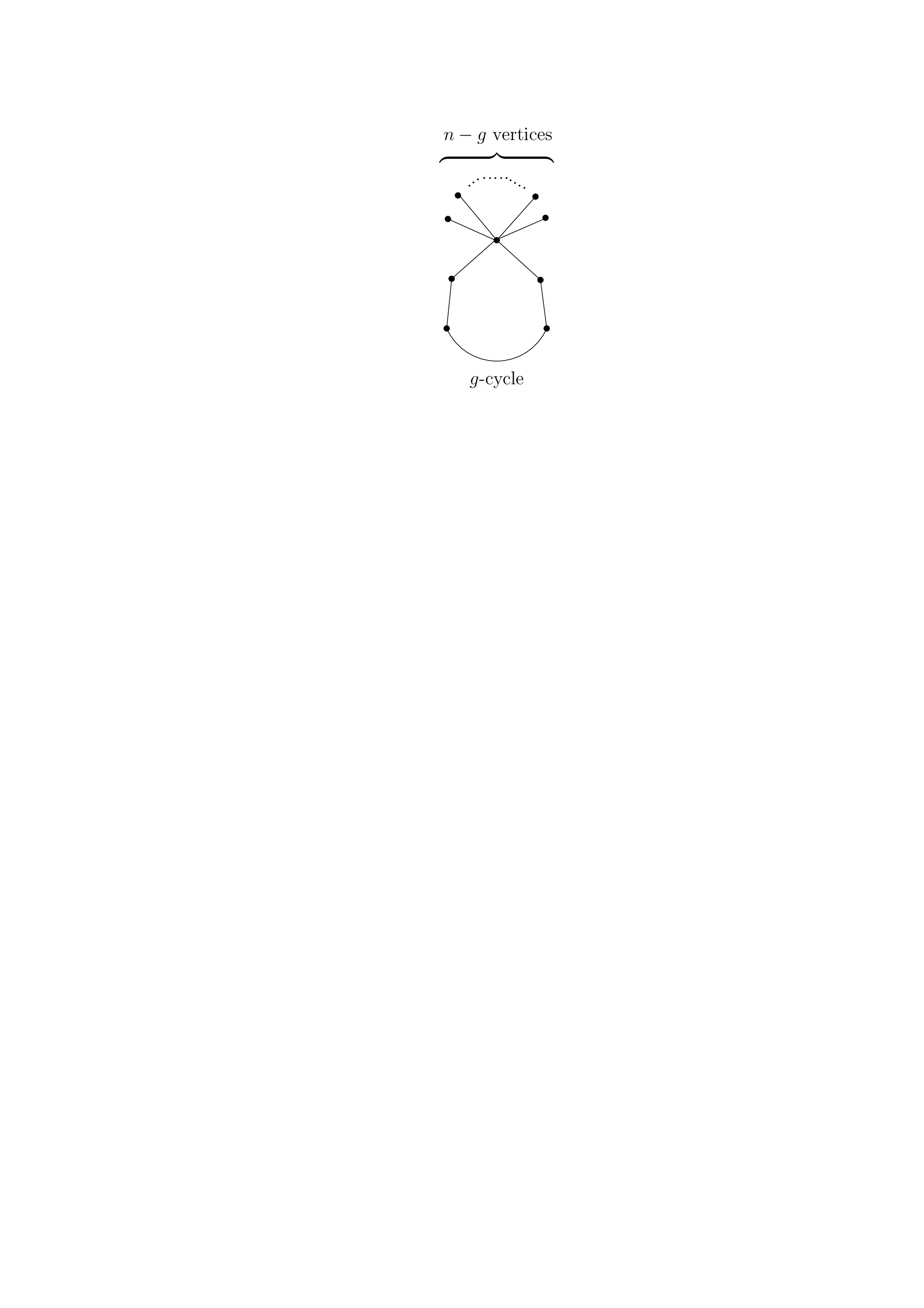}
\caption{The pineapple graph $U_{n,g}^p$}\label{fig6}
\end{figure}

For $3\leq g<n,$ $U_{n,g}(T_1,T_2,\ldots,T_g)$ denotes the unicyclic graph on $n$ vertices containing the cycle $C_g=:12 \cdots g1$ and trees $T_1,T_2,\ldots,T_g,$ where $T_i$ is a tree on $n_i+1$ vertices containing the only vertex $i$ of  $C_g$ for $i=1,2,\ldots,g.$ with $g+\sum {n_i}=n$. Then clearly, $U_{n,g}(T_1,T_2,\ldots,T_g)\in \mathcal{U}_{n,g}.$ If $T_i$ is a star on $n-g+1$ vertices for some $i= 1,2,\ldots,g$, then we call $U_{n,g}(T_1,T_2,\ldots,T_g)$ a pineapple graph and denote it by $U_{n,g}^p$(see Figure \ref{fig6}). 

\begin{figure}[!h]
 \includegraphics[scale=0.75]{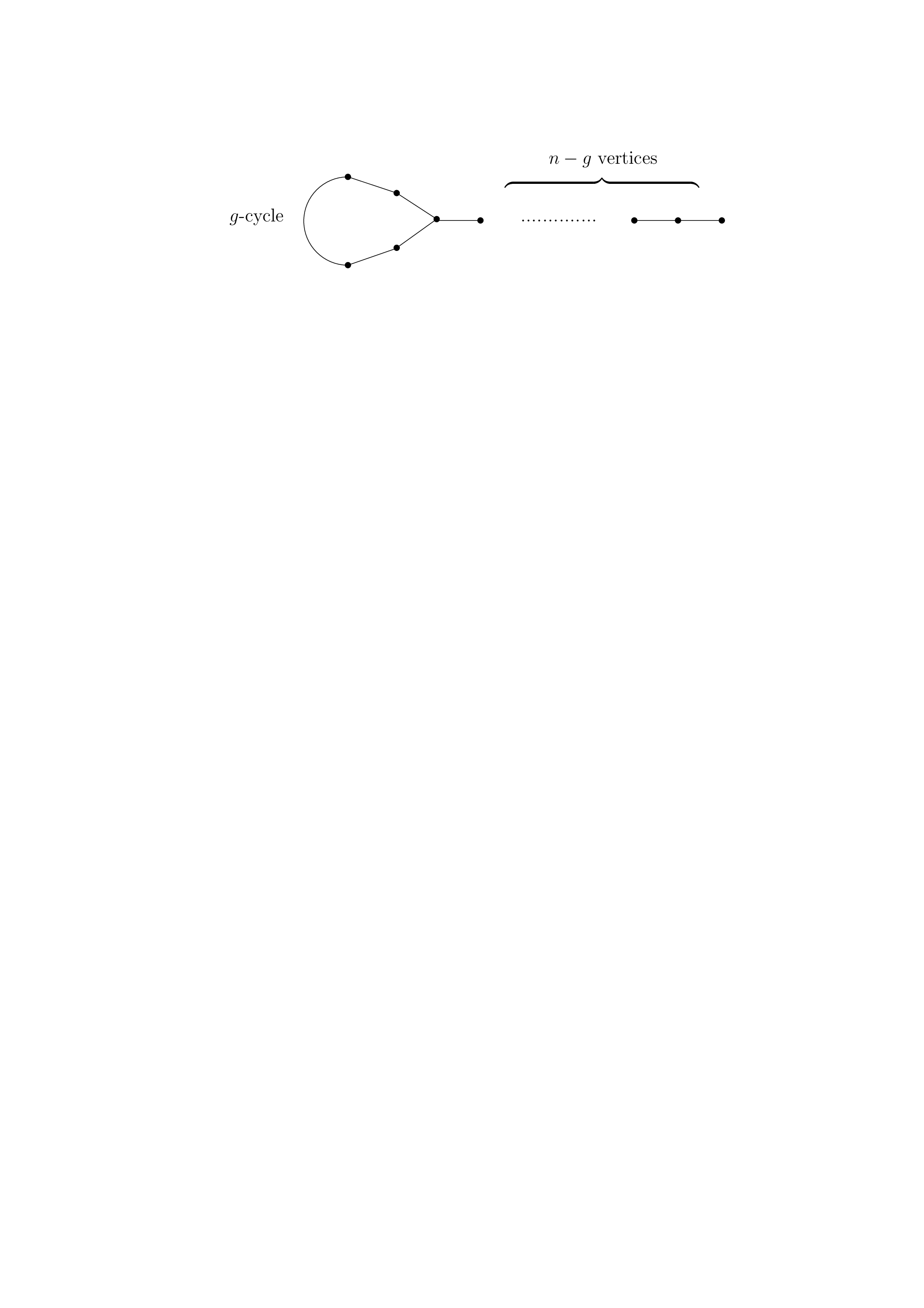}
\caption{The lollipop $U_{n,g}^l$}\label{fig7}
\end{figure}

If $T_i$ is a path on $n-g+1$ vertices with $i$ as one of its pendant vertex for some $i=1,2,\ldots,g$, then we call $U_{n,g}(T_1,T_2,\ldots,T_g)$ a lollipop graph graph and denote it as $U_{n,g}^l$(see Figure \ref{fig7}).

 \begin{lemma}\label{ucyclic1}
Let  $U_{n,g}(T_1,T_2,\ldots,T_g)$  be a unicyclic graph defined as above. Then $$F(U_{n,g}(P_{n_1+1},\cdots,P_{n_g+1}))\leq F(U_{n,g}(T_1,\ldots,T_g)) \leq F(U_{n,g}(K_{1,n_1},\ldots,K_{1,n_g}))$$
 where $K_{1,n_i}$ is the star on $n_i+1$ vertices with center at $i$ and $P_{n_i+1}$ is the path with $i$ as one of its pendant vertex.
\end{lemma}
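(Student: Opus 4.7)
Since the trees $T_1,\ldots,T_g$ attached at different vertices of $C_g$ interact with one another only through the cycle, each $T_i$ can be modified independently of the others. I plan to set up two iterative straightening procedures, one for each inequality, that transform a single $T_i$ (while preserving its size $n_i+1$) and strictly change $F$ in the required direction at each step.

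For the upper bound I would iterate Lemma \ref{effect-1}. If $T_i$ (rooted at $i$) is not $K_{1,n_i}$ centred at $i$, there is a child $x$ of $i$ that is not a leaf of $T_i$. The edge $\{i,x\}$ is a bridge of the unicyclic graph with neither endpoint pendant, because $i$ lies on $C_g$ and $x$ has a descendant, so Lemma \ref{effect-1} applies and identifying $i$ with $x$ while attaching a new pendant $y$ at the merged vertex strictly increases $F$. The resulting graph is again unicyclic with the same cycle $C_g$ and a tree of $n_i+1$ vertices at $i$; the former grandchildren of $i$ through $x$ have become children of $i$ and $y$ is a new leaf at $i$. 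The quantity $\sum_{u\in V(T_i)\setminus\{i\}} d_{T_i}(i,u)$ strictly decreases at each such step, so the process terminates, necessarily at $T_i=K_{1,n_i}$ centred at $i$.

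For the lower bound I would iterate Lemma \ref{effect-3}. If $T_i$ is not a path with $i$ as a pendant, $T_i$ has a vertex of degree at least $3$; let $v$ be one of maximum distance from $i$ in $T_i$. By the choice of $v$, every proper descendant of $v$ in $T_i$ has degree at most $2$, so at least two of the branches of $T_i$ hanging from $v$ on the side opposite to $i$ are pendant paths, of lengths $k\le l$ with $k,l\ge 1$. Taking the rest of the unicyclic graph (the cycle, the other $T_j$'s, and the portion of $T_i$ on the $i$-side of $v$, together with any other branches at $v$) as the base graph $G$ of Lemma \ref{effect-3} and $v$ as the attaching vertex, the grafting operation replaces $(k,l)$ by $(k-1,l+1)$ and strictly decreases $F$. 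Repeated application drives one of the two pendant paths at $v$ to length $0$, reducing the degree of $v$ by one; iterating until no vertex of $T_i$ has degree $\ge 3$ leaves $T_i$ a pendant path at $i$, i.e., $P_{n_i+1}$ with $i$ as an endpoint.

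The main obstacle I expect is verifying that the hypotheses of Lemmas \ref{effect-1} and \ref{effect-3} are really met in the unicyclic setting at every step, and that the procedures terminate exactly at the claimed extremal trees. The monotone potentials just indicated — the sum of distances from $i$ in $T_i$ for the upper-bound procedure, and $F$ itself (a positive-integer functional) for the lower-bound procedure — force termination; since each procedure can continue as long as $T_i$ is not already extremal, the only possible terminal configurations are $K_{1,n_i}$ centred at $i$ and $P_{n_i+1}$ with $i$ as a pendant, respectively, which yields both inequalities with equality characterisation.
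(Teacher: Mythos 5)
Your proposal is correct and follows essentially the same route as the paper: repeatedly apply Lemma \ref{effect-1} at the cycle vertex $i$ to push each $T_i$ toward the star $K_{1,n_i}$, and repeatedly graft edges via Lemma \ref{effect-3} to push each $T_i$ toward a pendant path, with strictness at every step giving the equality characterisation. The only slip is your claim that a $T_i$ which is not a pendant path at $i$ must contain a vertex of degree at least $3$ \emph{in $T_i$} --- this fails when $T_i$ is a path having $i$ as an interior vertex --- but your own procedure already handles that case by grafting at $v=i$, where two pendant paths hang.
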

 
 \begin{proof}
 Consider the unicyclic graph $U_{n,g}(T_1,T_2,\ldots,T_g)$. Suppose $T_i$ is not star with center at $i$. Then the vertex $i$ must be adjacent to a vertex, say $v$ of $T_i$ of degree at least $2.$  Identify the vertices $i$ and $v$ and add a pendant vertex at $i.$ Continue this operation till $T_i$ becomes $K_{1,n_i}$ with center at $i$, for $i=\{1,2,\ldots,g\}.$ By Lemma \ref{effect-1}, in each step the core index will increase. At last we get the unicyclic graph $U_{n,g}(K_{1,n_1},\ldots,K_{1,n_g}).$ So, $F(U_{n,g}(T_1,\ldots,T_g)) \leq F(U_{n,g}(K_{1,n_1},\ldots,K_{1,n_g}))$ and the equality  happens if and only if $U_{n,g}(T_1,\ldots,T_g) = U_{n,g}(K_{1,n_1},\ldots,K_{1,n_g}).$
 
To prove other inequality, suppose $T_k$ is not a path with $k$ as one of its pendant vertex. Then by using the grafting of edge operation, we can make $T_k$ a path with $k$ as one of its pendant vertex.
By Lemma \ref{effect-3}, in each step of this, the core index will decrease. Atlast we get the unicyclic graph $U_{n,g}(P_{n_1+1},\ldots,P_{n_g+1})).$ So, $F(U_{n,g}(P_{n_1+1},\ldots,P_{n_g+1}))\leq F(U_{n,g}(T_1,\ldots,T_g))$ and the equality  happens if and only if $U_{n,g}(T_1,\ldots,T_g) =U_{n,g} (P_{n_1+1},\ldots,P_{n_g+1}).$ This complete the proof.
 
 \end{proof}

\begin{theorem}\label{main-thm1}
For $3\leq g<n,$ let $G\in \mathcal{U}_{n,g}.$  Then $F(G)\leq F(U_{n,g}^p)$ with equality happens if and only if $G= U_{n,g}^p.$ Moreover, $$F(U_{n,g}^p)=n+g^2-g+1+(2^{n-g}-1)\left(2g+{g-1\choose 2}\right).$$
\end{theorem}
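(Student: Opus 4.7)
The overall strategy is a two-step reduction followed by a direct count. First, by Lemma~\ref{ucyclic1}, every $G \in \mathcal{U}_{n,g}$ can be written as $U_{n,g}(T_1,\ldots,T_g)$ with $|V(T_i)|=n_i+1$ and $\sum_{i=1}^{g}n_i=n-g$, and
\[
F(G)\leq F\bigl(U_{n,g}(K_{1,n_1},\ldots,K_{1,n_g})\bigr),
\]
with equality iff each $T_i$ is a star centered at the cycle vertex $i$. So it suffices to maximize over compositions $(n_1,\ldots,n_g)$ of $n-g$, with only stars attached at cycle vertices.

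Second, I would iteratively consolidate all the pendant-stars into a single one via Lemma~\ref{effect-4}. Suppose $H=U_{n,g}(K_{1,n_1},\ldots,K_{1,n_g})$ has two distinct indices $j\neq k$ with $n_j,n_k\geq 1$. Let $H_0$ be obtained from $H$ by deleting all pendants attached at $j$ and $k$; then $H=(H_0)_{jk}(n_j,n_k)$ in the notation of Lemma~\ref{effect-4}. Relabel so that $f_{H_0}(j)\geq f_{H_0}(k)$ (if they are equal, either choice works). Lemma~\ref{effect-4} then yields
\[
F(H)<F\bigl((H_0)_{jk}(n_j+n_k,0)\bigr),
\]
and the new graph is again of the same form but has strictly fewer cycle vertices carrying pendants. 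Iterating at most $g-1$ times terminates at $U_{n,g}^p$, so $F(G)<F(U_{n,g}^p)$ whenever $G\neq U_{n,g}^p$.

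To obtain the closed form, observe that $U_{n,g}^p$ is the union of $C_g$ and $K_{1,n-g}$ sharing exactly one vertex $v$ (the star's center, which lies on the cycle). The gluing identity used throughout Section~3 gives
\[
F(U_{n,g}^p)=F(C_g)+F(K_{1,n-g})-1+\bigl(f_{C_g}(v)-1\bigr)\bigl(f_{K_{1,n-g}}(v)-1\bigr).
\]
Substituting $F(C_g)=g^2+1$ and $f_{C_g}(v)=2g+\binom{g-1}{2}$ from Lemma~\ref{cycle-F}, together with $F(K_{1,n-g})=2^{n-g}+n-g$ and $f_{K_{1,n-g}}(v)=2^{n-g}$ from Theorem~\ref{tree-F}, and simplifying yields the stated formula. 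The only delicate point is verifying that Lemma~\ref{effect-4} can always be applied in the correct direction during the consolidation step; this is handled by relabeling $j,k$ according to the $f_{H_0}$-values. The termination of the iteration is immediate from the monovariant ``number of cycle vertices with a positive $n_i$'', and the final algebraic simplification is routine.
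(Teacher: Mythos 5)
Your proposal is correct and follows essentially the same route as the paper: reduce to stars attached at cycle vertices via Lemma~\ref{ucyclic1}, consolidate all pendants onto a single cycle vertex via repeated application of Lemma~\ref{effect-4}, and then compute $F(U_{n,g}^p)$ by the gluing identity using Lemma~\ref{cycle-F} and Theorem~\ref{tree-F}. If anything, your version is slightly more careful than the paper's, since you compare $f_{H_0}(j)$ and $f_{H_0}(k)$ in the base graph with the pendants at $j$ and $k$ removed, which is the actual hypothesis of Lemma~\ref{effect-4}, whereas the paper phrases the comparison in terms of $f$-values in the full graph.
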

 \begin{proof}
 Any unicyclic graph on $n$ vertices with girth $g$ is isomorphic to $U_{n,g}(T_1,T_2,\ldots,T_g)$ for some trees $T_1,\ldots,T_g.$ Then by Lemma \ref{ucyclic1}, $F(G)\leq F(U_{n,g}(K_{1,n_1},\ldots,K_{1,n_g}))$ and equality happens if and only if $G= U_{n,g}(K_{1,n_1},\ldots,K_{1,n_g}).$  If exactly one vertex on the cycle of $U_{n,g}(K_{1,n_1},\ldots,K_{1,n_g})$ has degree greater than  $2$ then $U_{n,g}(K_{1,n_1},\ldots,K_{1,n_g})= U_{n,g}^p.$ Otherwise let $i$ and $j$ be  two vertices on the cycle of $U_{n,g}(K_{1,n_1},\ldots,K_{1,n_g})$ of degree greater than $2$. Without loss of generality, let $f_{U_{n,g}(K_{1,n_1},\ldots,K_{1,n_g})}(i)\geq f_{U_{n,g}(K_{1,n_1},\ldots,K_{1,n_g})}(j).$ Move the pendant vertices from the vertex $j$ to the vertex $i$. Continue this till exactly one vertex on the cycle of $U_{n,g}(K_{1,n_1},\ldots,K_{1,n_g})$ has degree greater than $2$. Then by Lemma \ref{effect-4}, $F(U_{n,g}(K_{1,n_1},\ldots,K_{1,n_g})\leq F(U_{n,g}^p)$ and equality happens if and only if $U_{n,g}(K_{1,n_1},\ldots,K_{1,n_g})= U_{n,g}^p.$ This proves the first part of the result.
 
 Furthermore, let $u$ be the vertex in $U_{n,g}^p$ with degree at least $3.$ Then
 \begin{align*}
 F(U_{n,g}^p)&=F(C_g)+F(K_{1,n-g})-1+(f_{C_g}(u)-1)(f_{K_{1,n-g}}(u)-1)\\
 &=g^2+1+2^{n-g}+n-g-1+\left(2g+{g-1 \choose 2} -1\right) (2^{n-g}-1 ) \\
 &=n+g^2-g+1+(2^{n-g}-1)\left(2g+{g-1\choose 2}\right).
 \end{align*}
\end{proof}

In Theorem \ref{main-thm1}, we proved that among all unicyclic graphs on $n$ vertices with  girth $g,$ the pineapple graph $U_{n,g}^p$  maximizes the core index. The corresponding reverse relation with the Wiener index is the following:
\begin{theorem}(\cite{yf}, Theorem 1.1)
For $3\leq g<n,$ among all unicyclic graphs on $n$ vertices with  girth $g$, the Wiener index is minimized by the pineapple graph $U_{n,g}^p.$
\end{theorem}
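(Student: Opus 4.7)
The plan is to reduce an arbitrary $G \in \mathcal{U}_{n,g}$ to the pineapple $U_{n,g}^p$ via two successive $W$-decreasing operations. Write $G = U_{n,g}(T_1, \ldots, T_g)$, where $T_i$ is the tree on $n_i + 1$ vertices rooted at cycle vertex $i$ and $\sum_i n_i = n - g$. The proof proceeds in two steps: (A) replace every $T_i$ by the star $K_{1, n_i}$ centered at $i$; (B) concentrate all pendants at a single cycle vertex.

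For Step (A), suppose some $T_i$ is not a star centered at $i$. Then $T_i$ has a vertex at distance $\geq 2$ from $i$, so $i$ has a child $u$ in $T_i$ that is itself non-leaf in $T_i$, and $\{i, u\}$ is a bridge in $G$ with $\deg_G(i), \deg_G(u) \geq 2$. Apply the operation of Lemma~\ref{effect-1}: identify $i$ with $u$ into a vertex $w$, and append a pendant $y$ at $w$. The resulting graph $G'$ is in $\mathcal{U}_{n,g}$, since contracting a non-cycle edge and adding a pendant preserves the unique cycle, the girth $g$, and the total vertex count. Let $A, B$ denote the components of $G - \{i, u\}$ containing $i$ and $u$ respectively, with sizes $\alpha, \beta \geq 2$ and $\alpha + \beta = n$. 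Splitting unordered pairs into (within $A$), (within $B$), (crossing the bridge), and (involving $y$), direct computation gives
\[W(G) = W(A) + W(B) + \alpha\beta + \beta \, d'_A(i) + \alpha \, d'_B(u),\]
\[W(G') = W(A) + W(B) + \beta \, d'_A(i) + \alpha \, d'_B(u) + (n - 1),\]
hence $W(G) - W(G') = \alpha\beta - (\alpha + \beta - 1) = (\alpha - 1)(\beta - 1) > 0$. Iterating, every $T_i$ becomes $K_{1, n_i}$ centered at $i$, with a strict decrease of $W$ at every step.

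For Step (B), assume each $T_i = K_{1, n_i}$ centered at $i$. Partition unordered vertex pairs into four classes: (i) both on $C_g$, (ii) one cycle vertex and one pendant, (iii) two pendants at the same cycle vertex, (iv) two pendants at distinct cycle vertices. For pendant $p$ at $i$ and cycle vertex $k$, $d_G(k, p) = 1 + d_{C_g}(i, k)$; for pendants $p, q$ at distinct cycle vertices $i, j$, $d_G(p, q) = 2 + d_{C_g}(i, j)$; for two pendants at the same cycle vertex, $d_G = 2$. Using that $d'_{C_g}(v) = W_1$ is the same constant for every $v \in V(C_g)$ by rotational symmetry of $C_g$, and $\sum_i n_i = n - g$, a short algebraic simplification (using $\sum_i n_i^2 + 2\sum_{i<j} n_i n_j = (n-g)^2$) yields
\[W(G) = W(C_g) + (n - g)(n - 1 + W_1) + \sum_{1 \leq i < j \leq g} n_i n_j \, d_{C_g}(i, j).\]
Only the final sum depends on the pendant distribution $(n_1, \ldots, n_g)$. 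It is nonnegative and equals zero precisely when at most one $n_i$ is positive; since $\sum_i n_i = n - g \geq 1$, this forces $n_i = n - g$ for exactly one $i$ and zero elsewhere, giving $G = U_{n,g}^p$.

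The main obstacle is the careful distance accounting underpinning the clean identity $W(G) - W(G') = (\alpha - 1)(\beta - 1)$ in Step (A), where the contribution of the new pendant $y$ must be tracked alongside the savings on cross-bridge pairs. Step (B) is then essentially a counting exercise: once the closed form for $W(G)$ is extracted, the optimization is transparent because the dependence on the distribution reduces to a nonnegative bilinear expression that vanishes precisely at the pineapple.
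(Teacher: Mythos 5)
Your proposal is correct, but note that the paper itself gives no proof of this statement: it is quoted verbatim from Yu and Feng \cite{yf} purely to exhibit the reverse correlation with Theorem \ref{main-thm1}, so there is no in-paper argument to compare against. What you have written is a valid self-contained proof, and it is in fact the exact Wiener-index mirror of the paper's own proof of the dual core-index result: your Step (A) is the $W$-analogue of Lemma \ref{effect-1} as deployed in Lemma \ref{ucyclic1} (contract a non-pendant bridge, append a pendant, and check the sign of the change --- your identity $W(G)-W(G')=(\alpha-1)(\beta-1)$ is correct, and the dual identity $F(G')-F(G)=(f_{G_1}(w)-1)(f_{G_2}(w)-1)$ appears in that lemma), while your Step (B) replaces the pendant-moving argument of Lemma \ref{effect-4} by an explicit closed form for $W$ of a star-decorated cycle, whose distribution-dependent part $\sum_{i<j} n_i n_j\, d_{C_g}(i,j)$ vanishes exactly at the pineapple. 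I verified both computations; the only cosmetic gap is that ``iterating'' in Step (A) deserves a one-line termination argument (e.g.\ the potential $\sum_{v\in T_i} d_{T_i}(v,i)$ strictly decreases at each contraction and is minimized exactly by the star), and as a bonus your argument also yields uniqueness of the minimizer, which the quoted statement does not claim.
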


\begin{theorem}\label{main-thm2}
Let $G$ be a pineapple graph on $n\geq 4$ vertices. Then $$n^2+1+{n-2 \choose 2}\leq F(G)\leq (7 \times 2^{n-3})+n$$ with right equality happens if and only if $G= U_{n,3}^p$ and left equality happens if and only if $G= U_{n,n-1}^p.$
\end{theorem}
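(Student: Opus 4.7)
The plan is to leverage the explicit formula for $F(U_{n,g}^p)$ already derived in Theorem \ref{main-thm1}, namely
$$\phi(g):=F(U_{n,g}^p)=n+g^2-g+1+(2^{n-g}-1)\Bigl(2g+{g-1 \choose 2}\Bigr),$$
and reduce the whole theorem to showing that $\phi$ is strictly decreasing as a function of $g$ on $\{3,4,\ldots,n-1\}$. Under that reduction the two claimed bounds are just the endpoint values: substituting $g=3$ gives $\phi(3)=n+7+7(2^{n-3}-1)=7\cdot 2^{n-3}+n$, and substituting $g=n-1$ gives $\phi(n-1)=n+(n-1)(n-2)+1+2(n-1)+{n-2 \choose 2}=n^2+1+{n-2 \choose 2}$ after a routine simplification.

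For the monotonicity step, I would abbreviate $A_g:=2g+{g-1 \choose 2}$ and first record two one-line identities, $A_{g+1}-A_g=g+1$ and (for $g\geq 2$) $A_g-(g+1)={g \choose 2}$. Expanding $\phi(g)-\phi(g+1)$, substituting $A_{g+1}=A_g+(g+1)$, and using $2^{n-g}=2\cdot 2^{n-g-1}$, the algebra should collapse to
$$\phi(g)-\phi(g+1)={g \choose 2}\cdot 2^{n-g-1}-(g-1).$$
For $3\leq g\leq n-2$ one has $2^{n-g-1}\geq 2$, so the right-hand side is at least $g(g-1)-(g-1)=(g-1)^2>0$. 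Strict monotonicity follows, which simultaneously yields both inequalities in the theorem and the claimed uniqueness of the extremizers $U_{n,3}^p$ and $U_{n,n-1}^p$.

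The small edge case $n=4$ is disposed of separately: there $\{3,\ldots,n-1\}=\{3\}$ is a singleton, both sides of the theorem coincide at $\phi(3)=18$, and equality holds throughout. I do not anticipate any genuine obstacle; the main thing to get right is the cancellation leading to the clean expression ${g \choose 2}\cdot 2^{n-g-1}-(g-1)$, and the two identities for $A_g$ listed above make that calculation essentially mechanical.
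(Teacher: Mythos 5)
Your proposal is correct and takes essentially the same route as the paper: both reduce to the explicit formula of Theorem \ref{main-thm1} and establish monotonicity in $g$ by computing $F(U_{n,g}^p)-F(U_{n,g+1}^p)=\binom{g}{2}2^{n-g-1}-(g-1)>0$. The only cosmetic difference is that the paper factors this as $(g-1)\bigl[2^{n-g-1}(g/2)-1\bigr]$ rather than bounding it by $(g-1)^2$.
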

\begin{proof}

We first compare $F(U_{n,g}^p)$ and $F(U_{n,g+1}^p)$ for $g\geq 3.$ By Theorem \ref{main-thm1},
$$F(U_{n,g}^p)=n+g^2-g+1+(2^{n-g}-1)\left(2g+{g-1\choose 2}\right)$$ and
\begin{align*}
F(U_{n,g+1}^p)&=n+(g+1)^2-(g+1)+1+(2^{n-g-1}-1)\left(2g+2+{g \choose 2}\right)\\
         &= n+g^2+g+1+(2^{n-g-1}-1)\left(2g+2+{g \choose 2}\right).
\end{align*}
So, the difference

\begin{align*}
F(U_{n,g}^p)-F(U_{n,g+1}^p)&=-2g+(2^{n-g}-1)\left(2g+{g-1\choose 2}\right)-(2^{n-g-1}-1)\left(2g+2+{g \choose 2}\right)\\
                           &=-(g-1)+2^{n-g-1}\left[ \frac{g(g-1)}{2}\right]\\
                           &=(g-1)\left[2^{n-g-1}\left(\frac{g}{2}\right)-1\right]\\
		                       &>0, \mbox{ since $g\geq 3$ and $n> g$}.
\end{align*}
This implies $U_{n,3}^p$ has the maximum core index and $U_{n,n-1}^p$ has the minimum core index among all pineapple graphs on $n$ vertices and the result follows from Theorem \ref{main-thm1}. 
\end{proof}

We now determine the graph which maximizes the core index  among all unicyclic graph on $n$ vertices.

\begin{theorem}\label{main-thm3}
Among all unicyclic graph on $n\geq 4$ vertices the pineapple graph $U_{n,3}^p$ has the maximum core index.
\end{theorem}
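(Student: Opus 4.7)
The plan is to combine the two previous theorems with a side calculation handling the only case they do not directly cover, namely when the unicyclic graph is the cycle $C_n$ itself. Let $G\in\mathcal{U}_n$ have girth $g$, so $3\le g\le n$. I would split into the cases $3\le g<n$ and $g=n$.

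For the main case $3\le g<n$, I would simply chain Theorem \ref{main-thm1} and Theorem \ref{main-thm2}: by Theorem \ref{main-thm1} we have $F(G)\le F(U_{n,g}^p)$, with equality iff $G=U_{n,g}^p$; by Theorem \ref{main-thm2} we have $F(U_{n,g}^p)\le F(U_{n,3}^p)$, with equality iff $g=3$. Composing, $F(G)\le F(U_{n,3}^p)$ with equality iff $G=U_{n,3}^p$.

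The remaining case is $g=n$, where $\mathcal{U}_{n,n}=\{C_n\}$. Here Theorem \ref{main-thm1} does not apply, so I need to check directly that $F(C_n)<F(U_{n,3}^p)$ for $n\ge 4$. By Lemma \ref{cycle-F}, $F(C_n)=n^2+1$, and by Theorem \ref{main-thm2}, $F(U_{n,3}^p)=7\cdot 2^{n-3}+n$. The inequality to verify is therefore
\begin{equation*}
n^2+1 \;<\; 7\cdot 2^{n-3}+n, \qquad n\ge 4.
\end{equation*}
For $n=4$ this reads $17<18$, which holds. For $n\ge 5$ a short induction works: if $7\cdot 2^{n-3}+n-n^2-1>0$, then passing to $n+1$ increases the exponential term by $7\cdot 2^{n-3}$ and the polynomial part by $2n$, and $7\cdot 2^{n-3}\ge 2n+1$ for $n\ge 4$, preserving the strict inequality.

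The only mildly nontrivial step is this last numerical comparison, and even that is routine. Everything else is bookkeeping that invokes the previously established extremal results in the correct order.
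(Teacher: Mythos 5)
Your proposal is correct and follows essentially the same route as the paper: split off the case $G\cong C_n$, handle all other unicyclic graphs by chaining Theorem \ref{main-thm1} with Theorem \ref{main-thm2}, and finish with the direct comparison $n^2+1<7\cdot 2^{n-3}+n$ for $n\geq 4$. Your explicit induction for that last inequality is slightly more detailed than the paper's one-line assertion, but the argument is the same.
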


\begin{proof}
Let $G$ be a unicyclic graph on $n\geq 4$ vertices. Suppose $G$ is not isomorphic to $C_n.$ Then by Theorem \ref{main-thm1} and Theorem \ref{main-thm2}, $F(G)\leq F(U_{n,3}^p)$ with equality happens if and only if $G\cong U_{n,3}^p.$ Since $F(C_n)=n^2+1$, $F(U_{n,3}^p)=(7 \times 2^{n-3})+n$ and $n\geq 4,$ so, $F(C_n)<F(U_{n,3}^p).$ Hence the result follows
\end{proof}
 The reverse relation of Theorem \ref{main-thm3} with Wiener index is the following:
\begin{theorem}(\cite{yf},Corollary 1.2)
Among all unicyclic graph on $n\geq 6$ vertices the pineapple graph $U_{n,3}^p$ has the minimum Wiener index.
\end{theorem}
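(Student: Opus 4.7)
The plan is to mirror the proof of Theorem \ref{main-thm3} with the inequalities reversed, exploiting the reverse correlation between Wiener index and core index under the perturbations used in Section~3. First, I would invoke the companion fixed-girth statement (Theorem~1.1 of \cite{yf}, cited just above) to conclude that $W(G) \geq W(U_{n,g}^p)$ for every $G \in \mathcal{U}_{n,g}$ with $3 \leq g < n$. Combined with $\mathcal{U}_{n,n} = \{C_n\}$ and $\mathcal{U}_n = \bigcup_{g=3}^{n} \mathcal{U}_{n,g}$, this reduces the problem to showing $W(U_{n,3}^p) < W(U_{n,g}^p)$ for $4 \leq g \leq n-1$ and $W(U_{n,3}^p) < W(C_n)$, both under the hypothesis $n \geq 6$.

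Next I would derive a closed form for $W(U_{n,g}^p)$. Writing $u$ for the unique cycle vertex of degree exceeding $2$, and splitting unordered vertex pairs into cycle--cycle, pendant--pendant, and pendant--cycle contributions, one obtains
\begin{equation*}
W(U_{n,g}^p) \;=\; W(C_g) + 2\binom{n-g}{2} + (n-g)\bigl(g + D_g\bigr),
\end{equation*}
where $D_g := \sum_{v \in V(C_g)\setminus\{u\}} d_{C_g}(u,v) = \lfloor g/2\rfloor\lceil g/2\rceil$ and $W(C_g) = gD_g/2$. A routine simplification recasts this as $W(U_{n,g}^p) = W(C_g) + (n-g)(n-1+D_g)$; in particular $W(U_{n,3}^p) = n^2 - 2n$, a quadratic in~$n$.

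With these formulas in hand, I would settle the two remaining inequalities by direct computation. The difference $W(U_{n,g}^p) - W(U_{n,3}^p)$ is a polynomial in $n$ and $g$ (with a parity split through $D_g$) that one checks to be strictly positive whenever $4 \leq g \leq n-1$ and $n \geq 5$; crucially this cannot be obtained by telescoping, because the map $g \mapsto W(U_{n,g}^p)$ is not strictly increasing in $g$ (e.g.\ at $n=6$ one has $W(U_{6,4}^p) = W(U_{6,5}^p) = 26$). For the comparison against $C_n$, since $W(C_n)$ grows like $n^3/8$ while $W(U_{n,3}^p) = n^2-2n$ is quadratic, the inequality is plainly true for large $n$; the precise threshold $n \geq 6$ is forced by the small-case equalities $W(C_4) = W(U_{4,3}^p) = 8$ and $W(C_5) = W(U_{5,3}^p) = 15$, verified by inspection.

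The main obstacle is the absence of monotonicity in $g$, which rules out the clean telescoping argument that worked for the core index in Theorem \ref{main-thm2}; one must instead lower-bound $W(U_{n,g}^p) - W(U_{n,3}^p)$ uniformly, most cleanly by treating even and odd $g$ separately via the parity split in $D_g$. A subsidiary point is explaining the stronger threshold $n \geq 6$ (versus the $n \geq 4$ in Theorem \ref{main-thm3}): it is exactly the two collapses above where $C_n$ ties with $U_{n,3}^p$ that push the bound up, a feature with no analogue on the core-index side.
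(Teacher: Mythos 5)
The paper offers no proof of this statement: it is imported verbatim from Yu and Feng (\cite{yf}, Corollary 1.2) and is stated only to exhibit the reverse correlation with Theorem \ref{main-thm3}, so there is no in-paper argument to compare yours against. Judged on its own, your proposal is correct and complete modulo the other cited result (Theorem 1.1 of \cite{yf}), which is the natural dependency since the statement is literally a corollary of it in the source. I verified your formula $W(U_{n,g}^p)=W(C_g)+(n-g)(n-1+D_g)$ with $D_g=\lfloor g/2\rfloor\lceil g/2\rceil$, the value $W(U_{n,3}^p)=n^2-2n$, and the positivity of the difference: writing $g=2s$ gives $W(U_{n,g}^p)-W(U_{n,3}^p)=s(s-2)^2+(n-g)(s-1)^2>0$, and $g=2s+1$ gives $\tfrac{s(s+1)(2s+1)}{2}-4s^2+1+(n-g)s(s-1)>0$, for all $4\leq g\leq n-1$; similarly $W(C_n)-W(U_{n,3}^p)$ equals $\tfrac{n}{2}(\tfrac{n}{2}-2)^2$ or $\tfrac{n(n-3)(n-5)}{4}$ by parity, which pins down the threshold $n\geq 6$ and the ties at $n=4,5$ exactly as the paper records. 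Your observation that $g\mapsto W(U_{n,g}^p)$ is not monotone (e.g.\ $W(U_{6,4}^p)=W(U_{6,5}^p)=26$), so that the telescoping scheme of Theorem \ref{main-thm2} cannot be transplanted and a uniform bound with a parity split is needed instead, is a genuine and correct structural point that distinguishes the Wiener-index side from the core-index side.
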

For $n=4$ and $5$, over $\mathcal{U}_n$, the minimum Wiener index is attained by both the graphs $U_{n,3}^p$ and $C_n.$ Furthermore, $W(C_4)=8=W(U_{4,3}^p)$ and $W(C_5)=15=W(U_{5,3}^p).$ Next we characterize the graph which minimizes the core index over unicyclic graph on $n$ vertices. We will first do the minimization over unicyclic graphs with fixed girth.

 \begin{theorem}\label{main-thm4}
For $3\leq g<n,$ let $G\in \mathcal{U}_{n,g}.$ Then $F(G)\geq F(U_{n,g}^l)$ with equality happens if and only if $G= U_{n,g}^l.$ Moreover, $$F(U_{n,g}^l)=\left(\frac{n-g}{2}\right)(n+g^2+3)+g^2+1.$$
 \end{theorem}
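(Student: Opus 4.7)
The plan is to reduce any $G \in \mathcal{U}_{n,g}$ to the lollipop $U_{n,g}^l$ by applying, in order, Lemma~\ref{ucyclic1} and then Lemma~\ref{effect-5} iteratively, each step strictly decreasing the core index; finally $F(U_{n,g}^l)$ is evaluated by a cycle-plus-path decomposition.

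First, write $G = U_{n,g}(T_1,\ldots,T_g)$ with $|V(T_i)| = n_i + 1$ and $\sum_i n_i = n-g$. By Lemma~\ref{ucyclic1},
\[
F(G) \geq F\bigl(U_{n,g}(P_{n_1+1},\ldots,P_{n_g+1})\bigr),
\]
with equality iff each $T_i$ is a path having the cycle vertex $i$ as an endpoint. So it suffices to minimize over the restricted family where every attached tree is such a pendant-anchored path, with $\sum n_i = n-g$.

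Second, suppose two distinct indices $i \neq j$ satisfy $n_i, n_j \geq 1$. Let $G_0$ be the graph obtained from the current unicyclic graph by deleting the $n_i$ non-cycle vertices of the path attached at $i$ and the $n_j$ non-cycle vertices of the path attached at $j$. Then $G_0$ is connected and still contains the full cycle $C_g$, so the two arcs of $C_g$ from $i$ to $j$ already give $f_{G_0}(i,j) \geq 2$. Relabel so that $u \in \{i,j\}$ is the one with smaller $f_{G_0}$-value. Then Lemma~\ref{effect-5}, with $l = n_i+1 \geq 2$ and $k = n_j+1 \geq 2$, shows that replacing the two attached paths by a single path of $l+k-1$ vertices at $u$ strictly decreases $F$. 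Each such move reduces by one the number of cycle vertices carrying a nontrivial attached path, so after finitely many moves we reach $U_{n,g}^l$. Hence $F(G) > F(U_{n,g}^l)$ whenever $G \neq U_{n,g}^l$.

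Third, to evaluate $F(U_{n,g}^l)$, view it as the union of $C_g$ and $P_{n-g+1}$ sharing the single vertex $v$ (an endpoint of the path). The decomposition identity already used repeatedly in Section~3 gives
\[
F(U_{n,g}^l) = F(C_g) + F(P_{n-g+1}) - 1 + \bigl(f_{C_g}(v)-1\bigr)\bigl(f_{P_{n-g+1}}(v)-1\bigr).
\]
Substitute $F(C_g) = g^2+1$ and $f_{C_g}(v) = 2g + \binom{g-1}{2}$ from Lemma~\ref{cycle-F}, and $F(P_{n-g+1}) = \binom{n-g+2}{2}$, $f_{P_{n-g+1}}(v) = n-g+1$ from the discussion preceding Lemma~\ref{effect-3}. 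Using $2g + \binom{g-1}{2} - 1 = \tfrac{g(g+1)}{2}$, routine algebra collapses the result to $\tfrac{n-g}{2}(n+g^2+3) + g^2 + 1$, as claimed.

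The only piece of bookkeeping requiring care is verifying the hypothesis $f_{G_0}(u,v) \geq 2$ at each iteration of Lemma~\ref{effect-5}. Because the reduction removes only internal vertices of two attached paths and never touches $C_g$, the base graph $G_0$ always retains the full cycle, making the condition automatic. Beyond this, the argument is a transparent application of the machinery of Section~3.
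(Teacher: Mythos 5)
Your proof is correct and follows essentially the same route as the paper: reduce via Lemma \ref{ucyclic1}, then repeatedly merge two attached paths onto the cycle vertex with the smaller $f$-value using Lemma \ref{effect-5} (your verification that $f_{G_0}(i,j)\geq 2$ via the two arcs of $C_g$ is exactly the point that makes the lemma applicable), and finally evaluate $F(U_{n,g}^l)$ by decomposition. The only, immaterial, difference is that you compute the lollipop via a shared-vertex decomposition of $C_g$ and $P_{n-g+1}$, whereas the paper uses the bridge decomposition $F(C_g)+F(P_{n-g})+f_{C_g}(u)f_{P_{n-g}}(v)$; both give the same closed form.
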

 \begin{proof}
 Any unicyclic graph on $n$ vertices with girth $g$ is isomorphic to $U_{n,g}(T_1,T_2,\ldots,T_g)$ for some trees $T_1,\ldots,T_g.$ Then by Lemma \ref{ucyclic1}, $F(G)\geq F(U_{n,g}(P_{n_1+1},\ldots,P_{n_g+1}))$ and equality happens if and only if $G= U_{n,g}(P_{n_1+1},\ldots,P_{n_g+1}).$ If exactly one vertex on the cycle of $U_{n,g}(P_{n_1+1},\ldots,P_{n_g+1})$ has degree  $3$ then $U_{n,g}(P_{n_1+1},\ldots,P_{n_g+1})= U_{n,g}^l.$ Otherwise let $i$ and $j$ be  two vertices on the cycle of $U_{n,g}(P_{n_1+1},\ldots,P_{n_g+1})$ of degree  $3$. Without loss of generality, let $f_{U_{n,g}(P_{n_1+1},\ldots,P_{n_g+1})}(i)\leq f_{U_{n,g}(P_{n_1+1},\ldots,P_{n_g+1})}(j).$ Replace both the paths at $i$ and $j$ by a single path at $i$ on $n_i+n_j+1$ vertices.  Continue this till exactly one vertex on the cycle of $U_{n,g}(P_{n_1+1},\ldots,P_{n_g+1})$ has degree  $3$. Then by Lemma \ref{effect-5}, $F(U_{n,g}(P_{n_1+1},\ldots,P_{n_g+1}))\geq F(U_{n,g}^l)$ and equality happens if and only if $U_{n,g}(P_{n_1+1},...,P_{n_g+1})\cong U_{n,g}^l.$ This proves the first part of the result.
 
Furthermore, let $u$ be the only degree $3$ vertex of $U_{n,g}^l$ and let $v$ be the vertex in the path that adjacent to $u.$ Then
 \begin{align*}
 F(U_{n,g}^l)&=F(C_g)+F(P_{n-g})+f_{C_g}(u)f_{P_{n-g}}(v)\\
 &=g^2+1+{n-g+1 \choose 2}+\left(2g+ {g-1 \choose 2}\right)(n-g)\\
 &=\left(\frac{n-g}{2}\right)(n+g^2+3)+g^2+1
\end{align*}  
 \end{proof}
In Theorem \ref{main-thm4}, we proved that among all unicyclic graphs on $n$ vertices with  girth $g,$ the lollipop graph $U_{n,g}^l$  minimizes the core index. The corresponding reverse relation with the Wiener index is the following:
\begin{theorem}(\cite{yf},Theorem 1.1)
For $3\leq g<n,$ among all unicyclic graphs on $n$ vertices with  girth $g$, the Wiener index is maximized by the lollipop graph $U_{n,g}^l.$
\end{theorem}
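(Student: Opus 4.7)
The plan is to mirror the two-stage argument used in Theorem~\ref{main-thm4} (which identifies $U_{n,g}^l$ as the core-index minimizer on $\mathcal{U}_{n,g}$) with every inequality reversed, so that the same extremal graph instead maximizes the Wiener index. Any $G\in\mathcal{U}_{n,g}$ can again be parametrized as $U_{n,g}(T_1,\ldots,T_g)$, and the goal is to show $W(G)\leq W(U_{n,g}^l)$ by two successive symmetrizations, followed by a direct distance computation.

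In the first step I would reduce each tree $T_i$ to a path with the cycle vertex $i$ as a pendant. The required tool is the classical Wiener-index grafting lemma, the reverse-direction counterpart of Lemma~\ref{effect-3}: if two paths of lengths $k\leq l$ are attached at a common vertex of a connected graph $H$, then moving one edge from the shorter path onto the longer strictly increases $W$. This follows from a short pair-by-pair distance computation, since the newly extended endpoint gains more distance to the rest of the graph than it loses to the shortened path. Iterating this transformation inside each $T_i$ yields $W(G)\leq W(U_{n,g}(P_{n_1+1},\ldots,P_{n_g+1}))$, with equality iff each $T_i$ is already such a pendant-rooted path.

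In the second step, if two distinct cycle vertices $i\neq j$ carry nontrivial pendant paths of lengths $n_i,n_j\geq 1$, I would replace them with a single path of length $n_i+n_j$ attached at one of them and show $W$ strictly increases. The inequality needed is the Wiener analogue of Lemma~\ref{effect-5}: for paths $P_l,P_k$ attached at $u,v\in V(G)$ to form $G_{uv}^p(l,k)$, one has $W(G_{uv}^p(l,k))<W(G_{uv}^p(l+k-1,1))$ whenever $l,k\geq 2$ and $u\neq v$. Iterating at most $g-1$ merges, the only surviving graph is $U_{n,g}^l$. The closed-form value $W(U_{n,g}^l)$ can then be obtained by decomposing the pairwise distance sum into contributions within $C_g$, within the tail path $P_{n-g+1}$, and cross contributions between the two.

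The main obstacle is the path-consolidation inequality of the second step. Unlike the core-index version, which collapses to the tidy product $(f_G(u,v)-1)(lk-l-k+1)>0$, the Wiener analogue must be tracked pair by pair, and the cycle geometry (two alternative routes between $u$ and $v$) means that the effective distance between the roots of the two attached paths does not reduce cleanly to a single parameter. Handling these two competing routes is precisely what makes the comparison between "one long tail" and "two shorter tails" subtle; once this is in hand, the remainder of the argument is routine bookkeeping paralleling the core-index proof.
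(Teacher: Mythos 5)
This statement is not proved in the paper at all: it is quoted from \cite{yf} solely to exhibit the reverse correlation between the core index and the Wiener index, so there is no internal argument to compare your proposal against. Judged on its own terms, your two-stage plan (mirror Theorem~\ref{main-thm4} with all inequalities reversed) is the natural one, and the first stage is sound: the Wiener-index grafting lemma --- unbalancing two pendant paths attached at a common vertex strictly increases $W$ --- is classical, correctly oriented, and does reduce each $T_i$ to a pendant path rooted at $i$.

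The genuine gap is the second stage, which you flag as ``the main obstacle'' but do not close. Worse, the consolidation inequality $W(G_{uv}^p(l,k))<W(G_{uv}^p(l+k-1,1))$ is false at the level of generality you state it (only $l,k\geq 2$ and $u\neq v$): if $G$ is itself a path with endpoints $u$ and $v$, then both $G_{uv}^p(l,k)$ and $G_{uv}^p(l+k-1,1)$ are paths on the same number of vertices, so the two Wiener indices are equal and no strict inequality is available. This is exactly parallel to the core-index version, where Lemma~\ref{effect-5} needs the hypothesis $f_G(u,v)\geq 2$; the Wiener analogue likewise needs $u$ and $v$ to lie on a common cycle (two internally disjoint $u$--$v$ routes), and it must moreover be applicable when $G$ is the cycle together with the paths already merged in earlier steps of the iteration. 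Establishing that inequality --- tracking how the distances from the transplanted tail to the two arcs of the cycle change --- is essentially the entire content of the theorem, since stage one and the final closed-form evaluation of $W(U_{n,g}^l)$ are routine. As written, the proposal is therefore an outline with its central lemma missing; to complete it you would need to carry out that pairwise distance comparison explicitly (or follow the original argument in \cite{yf}).
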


The following result  compares the core index of  lollipop graphs.
\begin{theorem}\label{main-thm5}
Let $3\leq g< n$ and let $g_0$ be the largest positive integer such that  $\frac{3g_0^2-g_0+2}{2g_0}<n.$  Let $G$ be a lollipop graph on $n$ vertices. Then $$F(U_{n,3}^l)\leq F(G)\leq F(U_{n,g_0+1}^l)$$ with left equality happens if and only if $G= U_{n,3}^l$ and right equality happens if and only if $G= U_{n,g_0+1}^l.$
\end{theorem}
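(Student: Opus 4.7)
The plan is to treat $F(U_{n,g}^l)$ as a function of the girth $g$ over the finite set $\{3, 4, \ldots, n-1\}$, determine its monotonicity via a single consecutive-difference computation based on the closed form in Theorem \ref{main-thm4}, and then read off the two extrema. I would first compute $F(U_{n,g+1}^l) - F(U_{n,g}^l)$; a routine expansion gives an expression whose sign is governed by the inequality $n > h(g)$, where $h(g) := \frac{3g^2 - g + 2}{2g} = \frac{3g}{2} - \frac{1}{2} + \frac{1}{g}$. Since $h'(g) = \frac{3}{2} - \frac{1}{g^2} > 0$ for all $g \geq 1$, the threshold function $h$ is strictly increasing in $g$.

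Combining these two observations with the defining property of $g_0$ (the largest positive integer with $h(g_0) < n$), I would conclude that $F(U_{n,g+1}^l) > F(U_{n,g}^l)$ for $3 \leq g \leq g_0$ and $F(U_{n,g+1}^l) < F(U_{n,g}^l)$ for $g_0 + 1 \leq g \leq n - 2$. Strictness in the latter case uses the fact that $h(g)$ is never an integer for $g \geq 3$: a short parity check on $\frac{3g-1}{2} + \frac{1}{g}$ (half-integer plus something in $(0,1/2]$) shows this immediately. Thus the sequence is strictly unimodal with a unique peak at $g = g_0 + 1$, and a quick side computation that $h(n-1) \geq n$ whenever $n \geq 4$ ensures $g_0 + 1 \leq n - 1$, so $U_{n,g_0+1}^l$ is a legitimate lollipop graph. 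This yields the upper bound and its equality condition.

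For the lower bound, strict unimodality reduces the task to comparing the two endpoint values. Direct substitution into the formula of Theorem \ref{main-thm4} gives $F(U_{n,n-1}^l) - F(U_{n,3}^l) = (n-3)(n-4)$, which is strictly positive for $n \geq 5$, while for $n = 4$ there is only one lollipop graph on $n$ vertices, so the claim is vacuous there. Hence $U_{n,3}^l$ is the unique minimizer. The proof is essentially computational; the only real obstacle is keeping the consecutive-difference algebra clean, and the entire argument hinges on the monotonicity of $h(g)$, which is what makes $g_0$ pick out the unique peak of the sequence.
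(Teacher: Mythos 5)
Your proposal is correct and follows essentially the same route as the paper: compute the consecutive difference $F(U_{n,g+1}^l)-F(U_{n,g}^l)=\frac{2gn-(3g^2-g+2)}{2}$, deduce strict unimodality of the sequence with peak at $g_0+1$ from the threshold $n>h(g)$ with $h$ increasing, and settle the minimum by the endpoint comparison $F(U_{n,n-1}^l)-F(U_{n,3}^l)=(n-3)(n-4)$. The only cosmetic difference is that you rule out a vanishing difference by a non-integrality (parity) argument on $h(g)$ for $g\geq 3$, whereas the paper argues that the discriminant $(2n+1)^2-24$ cannot be a perfect square for $n\geq 4$; both work.
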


\begin{proof}
By Theorem \ref{main-thm4}, we have 
$$F(U_{n,g}^l)=\left(\frac{n-g}{2}\right)(n+g^2+3)+g^2+1$$ and 
$$F(U_{n,g+1}^l)=\left(\frac{n-g-1}{2}\right)(n+g^2+2g+4)+g^2+2g+2.$$
So the difference
\begin{align*}
F(U_{n,g+1}^l)-F(U_{n,g}^l) &=(2g+1)\left(\frac{n-g}{2}\right)-\left(\frac{n+g^2+2g+4}{2}\right)+2g+1\\  
		                        &=\frac{2gn-(3g^2-g+2) }{2}.
\end{align*}

Suppose $2gn-(3g^2-g+2)=0.$ Then $3g^2-(2n+1)g+2=0$ which implies $g=\frac{(2n+1)\pm\sqrt{(2n+1)^2-24}}{6}.$
But $g$ is an integer so $(2n+1)^2-24$ must be a perfect square. This implies $2n+1=7$ or $5$, which is a cotradiction as $n\geq 4.$ So either $F(U_{n,g+1}^l)-F(U_{n,g}^l)>0$ or $F(U_{n,g+1}^l)-F(U_{n,g}^l)<0.$

Let $g_0$ be the largest integer such that $\frac{3g_0^2-g_0+2}{2g_0}<n.$ Then the core index is maximized by the graph $U_{n,g_0+1}^l$ and minimized by the graph $U_{n,3}^l$ or $U_{n,n-1}^l.$ But 
$$F(U_{n,3}^l)=\left(\frac{n-3}{2}\right)(n+9+3)+9+1$$ and $$F(U_{n,n-1}^l)=\frac{1}{2}(n+(n-1)^2+3)+(n-1)^2+1.$$ So the difference $F(U_{n,n-1}^l)-F(U_{n,3}^l)=(n-3)(n-4)>0$ for $n\geq 5.$ Hence the result follows.
\end{proof}
\begin{theorem}\label{main-thm6}
Among all unicyclic graph on $n\geq 4$ vertices, the core index is minimized by  the lollipop graph $U_{n,3}^l$ if $n\geq 7$ and by the cycle $C_n$ if $n\leq5.$ For $n=6$, the core index is minimized by both the graphs $C_n$ and $U_{n,3}^l$.
\end{theorem}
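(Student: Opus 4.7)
The plan is to reduce the global minimization over $\mathcal{U}_n$ to a two-way comparison between the best lollipop $U_{n,3}^{l}$ and the cycle $C_n$, and then settle it by a direct calculation using the explicit formulas already in hand.

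First I would split $\mathcal{U}_n$ according to girth. If $G\in\mathcal{U}_n$ has girth $g$ with $3\leq g<n$, then Theorem~\ref{main-thm4} gives $F(G)\geq F(U_{n,g}^{l})$, and Theorem~\ref{main-thm5} gives $F(U_{n,g}^{l})\geq F(U_{n,3}^{l})$ with equality iff $g=3$. Hence among unicyclic graphs of girth strictly less than $n$, the minimum is attained uniquely by $U_{n,3}^{l}$. The only remaining graph is the one with girth $g=n$, namely $C_n$, for which $F(C_n)=n^{2}+1$ by Lemma~\ref{cycle-F}. Thus the global minimizer lies in $\{U_{n,3}^{l},C_n\}$.

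Next I would compare the two candidates by substituting $g=3$ in the formula of Theorem~\ref{main-thm4}:
\begin{align*}
F(U_{n,3}^{l})&=\left(\tfrac{n-3}{2}\right)(n+12)+10=\tfrac{n^{2}+9n-16}{2},
\end{align*}
so that
\begin{align*}
F(U_{n,3}^{l})-F(C_n)&=\tfrac{n^{2}+9n-16}{2}-(n^{2}+1)=-\tfrac{(n-3)(n-6)}{2}.
\end{align*}
Reading off the sign: for $n=4,5$ the expression is strictly positive, so $F(C_n)<F(U_{n,3}^{l})$ and $C_n$ is the unique minimizer; for $n=6$ the expression vanishes, giving the tie $F(C_6)=F(U_{6,3}^{l})$; and for $n\geq 7$ the expression is strictly negative, so $F(U_{n,3}^{l})<F(C_n)$ and $U_{n,3}^{l}$ is the unique minimizer. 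Combined with the first paragraph, this yields exactly the claimed trichotomy.

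There is no real obstacle here beyond bookkeeping: everything reduces to two invocations of previous theorems and the elementary factorization $n^{2}-9n+18=(n-3)(n-6)$. The only point that requires a moment of care is handling the girth $g=n$ case separately, since Theorems~\ref{main-thm4} and~\ref{main-thm5} are stated for $g<n$; treating $C_n$ as an extra candidate and comparing its core index explicitly to $F(U_{n,3}^{l})$ patches this cleanly.
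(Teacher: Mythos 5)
Your proposal is correct and follows essentially the same route as the paper: reduce to the two candidates $U_{n,3}^{l}$ and $C_n$ via Theorems~\ref{main-thm4} and~\ref{main-thm5}, then compare $F(U_{n,3}^{l})=\frac{n^{2}+9n-16}{2}$ with $F(C_n)=n^{2}+1$ and read off the sign of the difference. Your explicit remark about treating the girth-$n$ case separately (since those theorems assume $g<n$) is a small point of care the paper leaves implicit, but the substance is identical.
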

\begin{proof}
We know $F(C_n)=n^2+1$ and $F(U_{n,3}^l)=\frac{n^2+9n-16}{2}$. So
\begin{equation*}
(n^2+1) - \frac{n^2+9n-16}{2}
\begin{cases}
>0, &\text{if $n\geq 7$}\\
=0, &\text{if $n=6$}\\
<0, &\text{if $n\leq 5$.}
\end{cases}
\end{equation*}

Hence by Theorem \ref{main-thm4} and Theorem \ref{main-thm5},  the result follows.
\end{proof}
We end this section with the result corresponding to Theorem \ref{main-thm6} associated with Wiener index.
\begin{theorem}(\cite{yf}. Corollary 1.2)
Among all unicyclic graph on $n\geq 5$ vertices the lollipop graph $U_{n,3}^l$ has the maximum Wiener index.
\end{theorem}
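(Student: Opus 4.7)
My plan is to mirror, with appropriate reversals of direction, the three-stage strategy the authors used to minimize the core index over $\mathcal{U}_n$ in Theorems \ref{main-thm4}, \ref{main-thm5}, and \ref{main-thm6}. Since the Wiener index and the core index exhibit a reverse correlation, I expect the same extremal graph $U_{n,3}^l$ to be the maximizer of $W$.

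Stage 1 (fix the girth). For $3\leq g<n$ I would show that $U_{n,g}^l$ uniquely maximizes $W$ over $\mathcal{U}_{n,g}$, by proving Wiener-index analogs of Lemmas \ref{effect-1}, \ref{effect-3}, \ref{effect-4}, and \ref{effect-5}, each going the opposite way: (a) if $\{u,v\}$ is a bridge in $G$ with neither endpoint pendant, contracting it and attaching a new pendant vertex strictly decreases $W$, since collapsing two vertices cuts many pairwise distances by one while the new pendant contributes only short distances; (b) grafting an edge that straightens a branching tree hanging off the cycle into a path strictly increases $W$ (the Wiener-index reverse of Lemma \ref{effect-3}); (c) concentrating pendant stars hanging at two distinct cycle vertices into a single cycle vertex strictly decreases $W$; and (d) merging two pendant paths of lengths $l$ and $k$ attached at two different cycle vertices into one pendant path of length $l+k-1$ strictly increases $W$. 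Each of these is a direct distance-sum computation via $W(G)=\tfrac12\sum_v d'_G(v)$. Applied in sequence, they force the extremal configuration in $\mathcal{U}_{n,g}$ to be a cycle $C_g$ with a single pendant path of length $n-g$, i.e.\ $U_{n,g}^l$.

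Stage 2 (compare lollipops across girths). Using the decomposition of distances in $U_{n,g}^l$ into cycle-cycle, cycle-path, and path-path contributions, I would derive a closed form for $W(U_{n,g}^l)$ as a polynomial in $n$ and $g$. Then I would show $W(U_{n,g}^l)-W(U_{n,g+1}^l)>0$ for $3\leq g\leq n-2$: shrinking the cycle by one vertex lengthens the pendant path by one, creating $\Theta(n^2)$ new long distances while only shortening $\Theta(g^2)$ cycle distances, so the sign is positive. Hence among lollipops the maximum is at $g=3$.

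Stage 3 (dispose of $C_n$). The only element of $\mathcal{U}_n$ not covered by Stage 1 is $C_n$ itself (when $g=n$). Since $W(C_n)=\Theta(n^3/8)$ while $W(U_{n,3}^l)=\Theta(n^3/6)$ thanks to the long pendant path of length $n-3$, the lollipop dominates for all $n\geq 5$; the boundary case $n=5$ is checked by direct computation. I expect the main obstacle to be Stage 1(b) and 1(d), where the Wiener-index bookkeeping is most delicate: in (d) one must split distances by whether the shortest path between two points goes around the cycle or through a pendant path, and the resulting inequality must remain monotone uniformly in $l,k$ and in the girth $g$. The other three perturbations are relatively routine, and once Stage 1 is in hand, Stages 2 and 3 reduce to polynomial sign checks.
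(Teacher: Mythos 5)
This statement is not proved in the paper at all: it is quoted verbatim from Yu and Feng (reference \cite{yf}, Corollary~1.2) and is included only to exhibit the reverse correlation between the Wiener index and the core index established in Theorem~\ref{main-thm6}. So there is no in-paper proof to compare your attempt against; what you have written is a proposal for reproving an external citation.

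Judged on its own merits, your outline is structurally sound and does track how results of this kind (including the one in \cite{yf}) are actually proved: perturbation lemmas to pin down the extremal graph in each class $\mathcal{U}_{n,g}$, a closed form for $W(U_{n,g}^l)$, monotonicity in $g$, and a final comparison with $C_n$. Your step (a) is correct and clean: with $|V(G_1)|=a$, $|V(G_2)|=b$ one gets $W(G')-W(G)=-(a-1)(b-1)<0$; and the girth-monotonicity and the comparison with $C_n$ do hold (e.g.\ for $n=7$ the lollipop Wiener indices are $51,48,43,42$ for $g=3,4,5,6$ against $W(C_7)=42$). But be aware of three things. First, only your items (b) and (d) are needed for this maximization; (a) and (c) belong to the pineapple/minimization side and are dead weight here. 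Second, the places you yourself flag as delicate --- (d) and Stage 2 --- are exactly where the proof lives, and your heuristics do not suffice: in (d) the target attachment vertex must be chosen by comparing distance sums (the Wiener analogue of comparing $f_G(u)$ and $f_G(v)$ in Lemma~\ref{effect-5}), and in Stage 2 the margin $W(U_{n,g}^l)-W(U_{n,g+1}^l)$ can be as small as $1$ (it equals $1$ at $g=n-2$ for $n=7$), so the ``$\Theta(n^2)$ versus $\Theta(g^2)$'' argument collapses when $g$ is close to $n$ and must be replaced by the exact closed form $W(U_{n,g}^l)=W(C_g)+\binom{n-g+1}{3}+(n-g)\lfloor g^2/4\rfloor+g\binom{n-g+1}{2}$, with a parity split on $g$. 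Third, Stage 3 needs the exact inequality for every $n\geq 5$, not asymptotics plus one boundary check. None of these is a fatal flaw, but as written your submission is a plan rather than a proof.
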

For $n=4$, the graphs $U_{4,3}^l$ and $C_4$ are the only elements of $\mathcal{U}_4$ and $W(C_4)=8=W(U_{4,3}^l).$
 
\section{Graphs with fixed number of pendant vertices}

Throughout this section, graphs are all connected. Our goal in this section is to characterize the graphs which extremizes the core index over all connected graphs of order $n$ with $k$ pendant vertices. Let $\mathfrak{H}_{n,k}$ denote the class of all connected graphs of order $n$ with $k$ pendant vertices. For $k=n,$  $\mathfrak{H}_{n,k}=\{K_2\}$. If $k=n-1$ then $n\geq 3$ and $\mathfrak{H}_{n,k}=\{K_{1,n-1}\}$.  For $n=3,$  either $k=0$($C_3$ is the only graph in this case) or $k=2$($K_{1,2}$ is the only graph in this case). So we assume that $0\leq k\leq n-2$ and $n\geq 4$. We first consider the problem of maximizing the core index over $\mathfrak{H}_{n,k}$. 
  
For $0\leq k\leq n-3$, let $P_n ^k$ be the graph  obtained by adding $k$ pendant vertices  to a single vertex of the complete graph $K_{n-k}$. Then $P_n ^k \in \mathfrak{H}_{n,k}.$
\begin{theorem}\label{pmax-thm1}
For $0\leq k\leq n-3,$ The graph $P_n^k$ maximizes the core index
over $\mathfrak{H}_{n,k}.$ Furthermore, $F(P_n^k)=(2^k-1)(F(K_{n-k})-F(K_{n-k-1}))+F(K_{n-k})+k$
\end{theorem}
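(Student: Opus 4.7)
The plan is to split the proof into two tasks: verify the closed-form formula for $F(P_n^k)$ by direct counting, and show $F(G) \leq F(P_n^k)$ for every $G \in \mathfrak{H}_{n,k}$ by transforming $G$ into $P_n^k$ through two $F$-nondecreasing operations.

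For the formula, let $u$ be the distinguished vertex of $K_{n-k}$ in $P_n^k$ to which all $k$ pendants are attached. I would partition the connected subgraphs of $P_n^k$ by whether they contain $u$. Those not containing $u$ are either an isolated pendant (contributing $k$) or a connected subgraph of $K_{n-k}-u \cong K_{n-k-1}$ (contributing $F(K_{n-k-1})$). Those containing $u$ decompose uniquely as a connected subgraph of $K_{n-k}$ containing $u$ together with an arbitrary subset of the $k$ pendants (each pendant is automatically joined to $u$), giving $2^k \cdot f_{K_{n-k}}(u)$ such subgraphs. Since $f_{K_{n-k}}(u)=F(K_{n-k})-F(K_{n-k-1})$ (because a connected subgraph of $K_{n-k}$ either contains $u$ or lies entirely in $K_{n-k}-u$), summing and simplifying yields the stated expression.

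For the inequality, let $G \in \mathfrak{H}_{n,k}$ and set $U=V(G)\setminus\{\text{pendant vertices}\}$, so $|U|=n-k \geq 3$. In Stage A, add all missing edges within $U$ to form $G''$. Pendants are untouched so they remain pendants, and every vertex of $U$ already has degree at least $2$ so it remains non-pendant; hence $G'' \in \mathfrak{H}_{n,k}$, the induced subgraph on $U$ is $K_{n-k}$, and $G$ is a subgraph of $G''$ so $F(G) \leq F(G'')$. In Stage B, starting from $G''$, I iteratively consolidate pendants onto a single vertex of $U$. While two distinct vertices $u,v \in U$ each carry at least one pendant, let $H$ be $G''$ with the pendants at $u$ and $v$ removed; then $H$ consists of $K_{n-k}$ together with pendants attached only to vertices of $U\setminus\{u,v\}$, so the transposition of $u$ and $v$ in $K_{n-k}$ extends to an automorphism of $H$, giving $f_H(u)=f_H(v)$. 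Lemma \ref{effect-4}, applied with base graph $H$, $n_1 \geq 1$ pendants at $u$, and $n_2 \geq 1$ pendants at $v$, then yields a strict increase in $F$ upon moving all pendants at $v$ to $u$. Iterating until all $k$ pendants sit at one vertex produces exactly $P_n^k$, giving $F(G'') \leq F(P_n^k)$.

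The main obstacle is justifying the hypothesis $f_H(u) \geq f_H(v)$ required to invoke Lemma \ref{effect-4}; this is handled by the automorphism argument in Stage B, which is available precisely because Stage A completed the non-pendant part of $G$ into the vertex-transitive graph $K_{n-k}$. Chaining $F(G) \leq F(G'') \leq F(P_n^k)$ then establishes the maximality of $P_n^k$ over $\mathfrak{H}_{n,k}$.
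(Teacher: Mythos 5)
Your proposal is correct and follows essentially the same route as the paper: complete the induced subgraph on the $n-k$ non-pendant vertices to $K_{n-k}$ (a supergraph, so $F$ does not decrease), then repeatedly consolidate pendant vertices onto a single vertex via Lemma \ref{effect-4}, and compute $F(P_n^k)$ using $f_{K_{n-k}}(u)=F(K_{n-k})-F(K_{n-k-1})$. Your only departures are cosmetic improvements: you verify the formula by a direct partition of subgraphs rather than the paper's gluing identity $F(K_{n-k})+F(K_{1,k})-1+(f_{K_{n-k}}(w)-1)(f_{K_{1,k}}(w)-1)$, and your transposition-automorphism argument cleanly justifies the hypothesis $f_H(u)\geq f_H(v)$ of Lemma \ref{effect-4}, a detail the paper leaves implicit.
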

\begin{proof}
Let  $G\in \mathfrak{H}_{n,k}$ and let $v_1,v_2,\ldots,v_{n-k}$ be the non-pendant vertices of $G.$ If the induced subgraph $G[v_1,v_2,\ldots,v_{n-k}]$ is not complete, then form a new graph $G'$ from $G$ by joining all the non-adjacent non-pedant vertices of $G$ with new edges. Then $G' \in \mathfrak{H}_{n,k}$ and $F(G)<F(G').$ If $G'= P_n ^k$ then we are done, otherwise $G'$ has at least two vertices of degree greater than or equal to $n-k.$  Form a new graph $G''$ from $G'$ by moving all the pendant vertices to one of the vertex $v_1,v_2,\ldots,v_{n-k}$ following the pattern mentioned in the statement of the Lemma \ref{effect-4}. Then  $G'' = P_n ^k$ and by Lemma \ref{effect-4}, the result follows. 

For any vertex $v$ of the complete graph $K_n$, $f_{K_n}(v)=F(K_n)-F(K_{n-1}).$ Also we know, $F(K_{1,k})=2^k+k$ and $f_{K_{1,k}}(u)=2^k$ where $u$ is the non-pendant vertex of $K_{1,k}.$ Let $w$ be the vertex of $P_n^k$ with which $k$ pendant vertices are adjacent. Then 
\begin{align*}
F(P_n^k)&=F(K_{n-k})+F(K_{1,k})-1+(f_{K_{n-k}}(w)-1)(f_{K_{1,k}}(w)-1)\\
&= F(K_{n-k})+F(K_{1,k})+f_{K_{n-k}}(w)f_{K_{1,k}}(w)-f_{K_{n-k}}(w)-f_{K_{1,k}}(w)\\
&=(2^k-1)(F(K_{n-k})-F(K_{n-k-1}))+F(K_{n-k})+k
\end{align*}
This completes the proof.
\end{proof}

\begin{figure}[h]
\centering
\includegraphics[scale=.9]{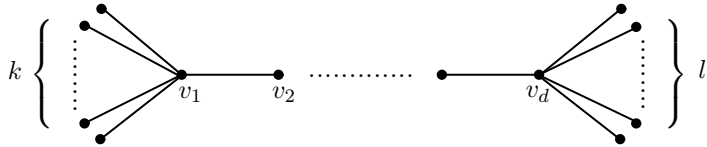}
\caption{The tree $T(k,l,d)$}\label{The tree T(k,l,d)}
\end{figure}

For a fixed positive integer $n,$ the path $ [v_1 v_2\cdots v_n]$ on $n$
vertices is denoted by $P_n$. For positive integers $k,l,d$ with
$n=k+l+d$, let $T(k,l,d)$ be the tree of order $n$ obtained by taking the
 path  $P_d$  and adding $k$ pendant vertices
adjacent to $v_1$ and $l$ pendant vertices adjacent to $v_d$ (see Figure
\ref{The tree T(k,l,d)}). Note that $T(1,1,d)$ is a path on $d+2$
vertices. 
\begin{theorem}\label{pmax-thm2}
The tree $T(1,n-3,2)$ maximizes the core index
over $\mathfrak{H}_{n,n-2}.$ Furthermore, $F(T(1,n-3,2))=3(2^{n-3})+n.$
\end{theorem}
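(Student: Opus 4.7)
The plan is to first classify $\mathfrak{H}_{n,n-2}$ structurally as a one-parameter family, then compute $F(T(k,l,2))$ in closed form, and finally optimize a single sum of powers of two.

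\emph{Structure of $\mathfrak{H}_{n,n-2}$.} First I would show that every $G \in \mathfrak{H}_{n,n-2}$ is isomorphic to $T(k,l,2)$ for some integers $k,l \geq 1$ with $k + l = n-2$. Indeed, $G$ has exactly two non-pendant vertices $u, v$. Every pendant has degree one, so no pendant can serve as an internal vertex of a $u$-$v$ path; connectivity of $G$ then forces the edge $\{u,v\}$. Each of the remaining $n-2$ vertices is a pendant attached either to $u$ or to $v$; writing $k$ and $l$ for the two counts, the requirement $\deg(u), \deg(v) \geq 2$ (neither being a pendant) forces $k, l \geq 1$.

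\emph{Counting $F(T(k,l,2))$.} Next I would partition the connected subgraphs of $T(k,l,2)$ by which of $u, v$ they contain. Subgraphs containing neither of $u, v$ must be single pendant vertices, contributing $n-2$. Subgraphs containing $u$ but not $v$ consist of $\{u\}$ together with an arbitrary subset of the $k$ pendants at $u$, contributing $2^k$; symmetrically there are $2^l$ subgraphs containing $v$ but not $u$. Subgraphs containing both $u$ and $v$ must include the edge $\{u,v\}$ and may independently contain any subset of the $k+l$ pendants, contributing $2^{k+l}=2^{n-2}$. Summing,
\begin{equation*}
F(T(k,l,2)) \;=\; (n-2) + 2^k + 2^l + 2^{n-2}.
\end{equation*}

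\emph{Optimization and conclusion.} With $k+l=n-2$ and $k,l\geq 1$ fixed, the identity
\begin{equation*}
(2^{k-1} + 2^{l+1}) - (2^k + 2^l) \;=\; 2^l - 2^{k-1}
\end{equation*}
shows that whenever $2 \leq k \leq l$ the swap $(k,l) \mapsto (k-1,l+1)$ strictly increases $2^k+2^l$. Iterating drives the pair to $\{k,l\}=\{1,n-3\}$, so $T(1,n-3,2)$ is the unique maximizer. Plugging in $k=1$, $l=n-3$ gives
\begin{equation*}
F(T(1,n-3,2)) \;=\; (n-2) + 2 + 2^{n-3} + 2^{n-2} \;=\; n + 3\cdot 2^{n-3},
\end{equation*}
as claimed.

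The main (and really only) point requiring care is the structural classification: once $\mathfrak{H}_{n,n-2}$ is pinned down as the one-parameter family $\{T(k,l,2):k,l\geq 1,\;k+l=n-2\}$, both the count and the optimization are elementary. Note that Lemma \ref{effect-4} is not directly applicable here, since moving every pendant off $v$ onto $u$ would make $v$ a pendant and push the graph out of $\mathfrak{H}_{n,n-2}$; for this reason the direct counting argument is the natural route.
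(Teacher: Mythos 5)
Your proof is correct, but it takes a different route from the paper. The paper classifies $G$ as $T(k,l,2)$ just as you do, but then disposes of the optimization in one line by invoking Lemma \ref{effect-4}, and only computes $F$ for the single extremal tree $T(1,n-3,2)$. You instead derive the closed form $F(T(k,l,2))=(n-2)+2^k+2^l+2^{n-2}$ by partitioning the connected subgraphs according to which of $u,v$ they contain, and then optimize $2^k+2^l$ directly; this is self-contained, gives the value of $F$ for every member of $\mathfrak{H}_{n,n-2}$ rather than just the winner, and makes the strictness (hence uniqueness of the maximizer) completely transparent. One remark: your parenthetical claim that Lemma \ref{effect-4} ``is not directly applicable'' is not quite right. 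The lemma does apply if one takes the base graph in its statement to be $T(1,1,2)=P_4$ with the two internal vertices playing the roles of $u$ and $v$ and with $n_1=k-1$, $n_2=l-1$; moving those $k+l-2$ surplus pendants to one side yields exactly $T(1,n-3,2)$ while keeping both $u$ and $v$ non-pendant, which is why the paper imposes the hypothesis $k,l\geq 2$. So the paper's argument is sound; yours is simply more explicit and avoids having to choose that base graph carefully.
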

\begin{proof}
Let $G\in \mathfrak{H}_{n,n-2}.$ Then $G$ is isomorphic to $T(k,l,2)$ for some $k,l\geq 1.$ If $k,l\geq 2$ then form the tree $T(1,n-3,2)$ from $G$ by moving pendant vertices from one vertex to other following the pattern mentioned in the statement of the Lemma \ref{effect-4}. Then $F(G)<F(T(1,n-3,2))$ and also
$F(T(1,n-3,2))=3+2^{n-3}+n-3+2^{n-2}=3(2^{n-3})+n$. This completes the proof.
\end{proof}

Let $\mathfrak{T}_{n,k}$ denote the subclass of $\mathfrak{H}_{n,k}$
consisting of all trees of order $n$ with $k$, $2\leq k\leq n-2$, pendant
vertices. By Theorem \ref{pmax-thm2}, the tree $T(1,n-3,2)$ has the maximum core index over $\mathfrak{T}_{n,n-2}$. We next prove this result for $2\leq k\leq n-3.$ To do so, we first define the following tree.

Let $T_{n,k}$ be the tree of order $n$ that has a vertex $v$ of degree $k$
and $T_{n,k}- v = r P_{q+1} \cup (k-r) P_q$, where $q=\lfloor
\frac{n-1}{k} \rfloor$ and $r = n-1 -kq$.

\begin{figure}[h]
\centering
\includegraphics[scale=.9]{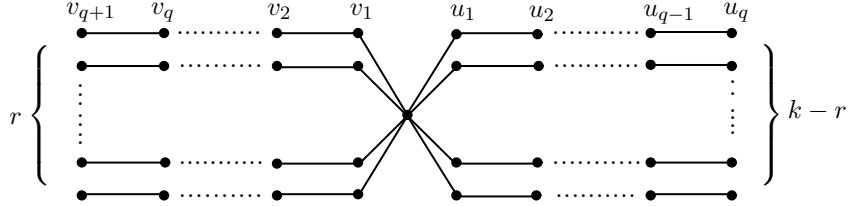}
\caption{The tree $T_{n,k}$}\label{starpath tree}
\end{figure}

\begin{theorem}\label{pmax-thm3}
For $2\leq k\leq n-3,$  the tree $T_{n,k}$ maximizes the core index over $\mathfrak{T}_{n,k}$. Furthermore, $$F(T_{n,k})=(q+2)^r(q+1)^{k-r}+\frac{(q+1)(qk+2r)}{2}.$$
\end{theorem}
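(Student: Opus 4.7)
The plan is to show that any $T \in \mathfrak{T}_{n,k}$ can be transformed into $T_{n,k}$ by a sequence of operations that each preserve the pendant count and do not decrease the core index, and then to compute $F(T_{n,k})$ by a direct decomposition at the center. The transformation proceeds in two phases: first reduce $T$ to a spider (a starlike tree with exactly one vertex of degree $\geq 3$), and then balance the leg lengths.

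\textbf{Spider reduction.} Suppose $T$ has two or more branching vertices. Pick a branching vertex $v$ that is outermost in the sense that all branches at $v$ except one are pure pendant paths $Q_1,\ldots,Q_s$ with $s\geq 2$; such a $v$ exists because the branching vertices of $T$ form a subtree with a leaf. Let $u$ be the next branching vertex along the exceptional branch. I would then relocate all but one of $Q_1,\ldots,Q_s$ to $u$, letting the remaining pendant path (say $Q_s$) absorb the $u\leadsto v$ path into a single longer pendant path at $u$; each relocated $Q_i$ and the merged path each still contribute exactly one leaf, so the pendant count is preserved. For the effect on $F$, writing $T = T_0 \cup P$ for a pendant path $P$ of $a$ vertices attached at $x$, the decomposition identity gives $F(T) = F(T_0) + F(P_{a+1}) - 1 + a(f_{T_0}(x)-1)$, so moving $P$ from $x$ to $y$ changes $F$ by $a\,(f_{T_0}(y)-f_{T_0}(x))$. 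Ordering the individual relocations according to the monotonicity of $f_{T_0}$ along the interior path $v\leadsto u$ (in the spirit of Lemmas \ref{effect-4} and \ref{effect-5}) makes each step non-decreasing and the composite strict, so the number of branching vertices strictly decreases. Iterating yields a spider with core index at least $F(T)$.

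\textbf{Balancing and formula.} Once $T$ is a spider centered at $v$ with $k$ pendant legs of sizes $m_1,\ldots,m_k$ summing to $n-1$, Lemma \ref{effect-3} applied to two legs of lengths differing by at least $2$ strictly increases $F$; hence the extremum has leg sizes $q = \lfloor (n-1)/k\rfloor$ and $q+1$, with exactly $r = n-1-kq$ of the latter. This tree is $T_{n,k}$. To compute $F(T_{n,k})$, split connected subgraphs by whether they contain $v$. Those containing $v$ are obtained by choosing, independently in each leg of $m_i$ vertices, a prefix of $0,1,\ldots,m_i$ vertices beginning at $v$'s neighbor in that leg; this gives $\prod_i (m_i+1) = (q+2)^r(q+1)^{k-r}$. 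Those not containing $v$ are sub-paths of single legs, contributing $r\binom{q+2}{2} + (k-r)\binom{q+1}{2} = \frac{(q+1)(qk+2r)}{2}$. Summing gives the stated formula.

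\textbf{Main obstacle.} The delicate point is the spider reduction, because none of Lemmas \ref{effect-1}--\ref{effect-5} as stated merges two branching vertices while preserving the pendant count. Justifying the composite ``relocate pendant paths and absorb the interior path'' move rigorously will require either an auxiliary lemma dedicated to this operation or a careful assembly of Lemma \ref{effect-2} (bridge contraction plus a pendant) with compensating rearrangements; tracking $f_{T_0}$ along the interior $v\leadsto u$ path to ensure monotonicity is the main technical hurdle.
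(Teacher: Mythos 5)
Your second phase (balancing legs via Lemma \ref{effect-3}) and your computation of $F(T_{n,k})$ are both correct; in fact your count --- subtrees through the center contribute $\prod_i(m_i+1)=(q+2)^r(q+1)^{k-r}$ and subtrees avoiding it contribute $r\binom{q+2}{2}+(k-r)\binom{q+1}{2}$ --- is cleaner than the recurrence the paper uses for the same formula. The gap is in the spider reduction. Your prescribed move relocates the pendant paths from the \emph{outermost} branch vertex $v$ to the next branch vertex $u$, and by your own displacement formula each such relocation changes $F$ by $a\,(f_{T_0}(u)-f_{T_0}(v))$. Nothing forces this to be nonnegative, and no reordering of the relocations can fix the sign. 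Concretely, take $u$ adjacent to $v$, with two pendant leaves $w_1,w_2$ at $u$ and two pendant paths $Q_1,Q_2$ of ten vertices each at $v$ (so $n=24$, $k=4$, and $v$ is outermost). For $T_0=T-Q_1$ one computes $f_{T_0}(v)=11\cdot 5=55$ and $f_{T_0}(u)=4\cdot 12=48$, so moving $Q_1$ from $v$ to $u$ changes $F$ by $10(48-55)=-70<0$. The correct direction of transfer is dictated by the $f$-values, not by which vertex is outermost, and when it points toward $v$ the branches you would have to move off $u$ need not be pendant paths, while emptying $u$ would orphan the interior $u$--$v$ path into a new pendant path and change $k$. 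This is exactly the difficulty your final paragraph gestures at, but it is not a technicality to be patched by monotonicity along the interior path: the operation itself is the wrong one.

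The paper avoids this entirely with Lemma \ref{effect-2}: instead of relocating branches, it contracts one edge of the interior $u$--$v$ path (identifying the two endpoints of a bridge) and compensates by attaching a new pendant vertex $y$ to an \emph{existing leaf} $x$ on the side with the smaller $f$-value. Since $x$ was a leaf and $y$ becomes one, the number of pendant vertices is unchanged; since the hypothesis $f_{G_1}(u)\le f_{G_2}(v)$ can always be arranged by naming the sides appropriately, the lemma applies with no directional obstruction and gives a strict increase in $F$ at every step. Iterating shortens the interior path until $u$ and $v$ merge, reducing the number of branch vertices, and then the grafting/balancing phase proceeds as you describe. So your outline needs its first phase replaced by (or rigorously reduced to) the edge-contraction-plus-leaf-extension move of Lemma \ref{effect-2}; as written, the relocation step can strictly decrease the core index.
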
 
\begin{proof}
If $k=2$ then $P_n$ is the only tree in $\mathfrak{T}_{n,2}$ and the result is true. So assume $k\geq 3.$ Let $T\in \mathfrak{T}_{n,k}.$ Suppose $T$ has at least two vertices of degree greater than or equal to $3.$ Let $u$ and $v$ be two vertices of $T$ with $d(u),d(v) \geq 3$ and every vertex in the $u$-$v$ path of $T$ except $u$ and $v$(if any) is of degree $2.$ Form a new tree $T'$ by identifying  two vertices of the path and adding a pendant vertex to a pendant vertex of $T$ following the pattern mentioned in the Lemma \ref{effect-2}. The new tree $T'\in \mathfrak{T}_{n,k}$ and by Lemma \ref{effect-2}, $F(T)<F(T').$ Continue this till the identification of $u$ and $v$. In this process, we will get a tree $\tilde{T}\in\mathfrak{T}_{n,k}$ with exactly one vertex, say $w$ of degree $k$ and $F(T)<F(\tilde{T}).$ If $\tilde{T} = T_{n,k}$ then we are done. Otherwise  $\tilde{T}-w$ has $k$ paths and at least two of its paths $P_{n_1}$ and $P_{n_2}$ with $n_1-n_2\geq 2.$ By using the grafting of edge operation on $\tilde{T},$ we can reduce the difference of the length of the paths of $\tilde{T}-w$ to at most $1$. In this process at last we get the tree  $T_{n,k}$ and by Lemma \ref{effect-3}, every step the core index will increase. This completes the first part of the proof.\\

 For $l\geq 2$ and $q\geq 1$, let $T_l^q$ be the tree on $lq+1$ vertices with $l$ pendant vertices having one vertex $v$ of degree $l$ and $T_l^q-v=lP_q$. We will first find the value of  $F(T_l^q)$ and use that to get the value of $F(T_{n,k})$. In this context, we denote the path on $q+1$ vertices by $T_1^q$ and for a pendant vertex $v$ of $T_1^q,$ $f_{T_1^q}(v)=q+1.$  For $l \geq 2,$
$f_{T_l^q}(v)=f_{T_{l-1}^q}(v)+qf_{T_{l-1}^q}(v)=(q+1)f_{T_{l-1}^q}(v).$
So, we get $f_{T_{l}^q}(v)=(q+1)^l$ for $l\geq 1.$ Then
\begin{align*}
F(T_l^q)&=F(T_{l-1}^q)+F(P_q)+qf_{T_{l-1}^q}(v)\\
&=F(T_{l-1}^q)+\frac{q(q+1)}{2}+q(q+1)^{l-1}. 
\end{align*}
Solving this recurrence relation, we get $F(T_l^q)=(q+1)(\frac{lq}{2}+(q+1)^{l-1})$.

For the tree $T_{n,k},$ we have $n-1=kq+r, \; 0\leq r< k.$ When $r=0,$ $T_{n,k}$ is isomorphic to $T_k^q$ and $F(T_{n,k})=(q+1)(\frac{kq}{2}+(q+1)^{k-1})=(q+1)^k+\frac{kq(q+1)}{2}.$

For $1\leq r < k,$ let $v$ be the vertex of degree $k$ in $T_{n,k}.$ Then we have 
\begin{align*}
F(T_{n,k})&=F(T_r^{q+1})+F(T_{k-r}^q)-1+(f_{T_r^{q+1}}(v)-1)(f_{T_{k-r}^q}(v)-1)\\
&=(q+2)\left(\frac{r(q+1)}{2}+(q+2)^{r-1}\right)+(q+1)\left(\frac{(k-r)q}{2}+(q+1)^{k-r-1}\right)-1\\
&+((q+2)^r-1)((q+1)^{k-r}-1)\\
&=(q+2)^r(q+1)^{k-r}+\frac{(q+1)(qk+2r)}{2}.
\end{align*}
This completes the proof.
\end{proof}

The above result is proved by Zhang et al. in \cite{zha} (see Corollary 5.3) but our proof is little different. We also explained the counting for $F(T_{n,k})$. We will now consider the problem of minimizing the core index over $\mathfrak{H}_{n,k}.$ The following is an important result in this regard.

\begin{theorem}\label{pmax-thm4}(\cite{sli}, Theorem 1)
For $2\leq k\leq n-2,$  the tree $T(\lfloor\frac{k}{2}\rfloor,\lceil\frac{k}{2}\rceil,n-k)$ minimizes the core index over $\mathfrak{T}_{n,k}$. Further 
\begin{equation*}
F(T(\lfloor\frac{k}{2}\rfloor,\lceil\frac{k}{2}\rceil,n-k))=
\begin{cases}
(n-k-1)2^{\frac{k}{2}+1}+2^k+k+{n-k-1 \choose 2}, & \textit{if k is even}\\ 
3(n-k-1)2^{\frac{k-1}{2}}+2^k+k+{n-k-1 \choose 2}, & \textit{if k is odd.}
\end{cases}
\end{equation*}
\end{theorem}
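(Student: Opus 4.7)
The plan is to transform an arbitrary $T\in\mathfrak{T}_{n,k}$ into $T(\lfloor k/2\rfloor,\lceil k/2\rceil,n-k)$ through a sequence of leaf-preserving moves, each strictly decreasing the core index. The workhorse is the \emph{swing-a-leaf} identity: for any leaf $\ell$ of $T$ attached to $u$, one has $F(T)=F(T-\ell)+1+f_{T-\ell}(u)$, so reattaching $\ell$ at another vertex $w$ (without destroying the leaf count) changes $F$ by exactly $f_{T-\ell}(w)-f_{T-\ell}(u)$. Combined with the strict concavity of $f$ along any path in a tree (Lemma \ref{concave}) and with the lemmas of Section~3, this identity drives the whole reduction.

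The argument proceeds in three steps. \emph{Step 1} (reduction to a caterpillar): starting from a diameter path $P$ of $T$, if some component of $T-P$ contains more than one vertex then there is a non-trivial branch $C$ at an interior spine vertex; by combining preparatory swings of single leaves with Lemma \ref{effect-3} applied at the attaching vertex to the pendant path into $C$ and another pendant path there, one peels $C$ from its deepest sub-branches inward until every component of $T-P$ is a single vertex. \emph{Step 2} (internal pendants to spine endpoints): once $T$ is a caterpillar with spine $s_1 s_2\cdots s_m$, for each leaf $\ell$ attached to an interior $s_j$ the sequence $f_{T-\ell}(s_1),\ldots,f_{T-\ell}(s_m)$ is strictly concave and hence unimodal, so its minimum is attained at an endpoint and is strictly smaller than $f_{T-\ell}(s_j)$; swinging $\ell$ to that endpoint strictly decreases $F$ while preserving the leaf count (both $s_j$ and the target endpoint remain non-leaves). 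After iterating, $T=T(k_1,k_2,n-k)$ with $k_1+k_2=k$. \emph{Step 3} (balancing): if $k_1\geq k_2+2$, a short direct computation on $T(k_1-1,k_2,n-k)$ yields $f_{T-\ell}(v_1)-f_{T-\ell}(v_{n-k})=(n-k-1)(2^{k_1-1}-2^{k_2})>0$, so swinging a pendant from $v_1$ to $v_{n-k}$ strictly decreases $F$; iterating forces $|k_1-k_2|\leq 1$, yielding $T(\lfloor k/2\rfloor,\lceil k/2\rceil,n-k)$.

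To extract the closed form, I would decompose each connected subgraph of $T(\lfloor k/2\rfloor,\lceil k/2\rceil,n-k)$ into its trace on the spine (a sub-path, or the empty trace, in which case the subgraph is a single pendant leaf) together with, independently for each spine endpoint in the trace, an arbitrary subset of the pendants at that endpoint; summing over the $\binom{n-k+1}{2}$ sub-paths, distinguished by which of the two endpoints they contain, yields the piecewise formula in the statement, with the parity of $k$ entering solely through $2^{\lfloor k/2\rfloor}+2^{\lceil k/2\rceil}$. The main obstacle is Step~1: Lemma \ref{effect-3} is stated for two simple pendant paths meeting at a single vertex, but a non-caterpillar tree may have branches with their own internal branching, so a careful inductive unpeeling (using swing-a-leaf moves to create the requisite second pendant path before grafting) is needed to make Step~1 rigorous.
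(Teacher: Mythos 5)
First, note that the paper itself does not prove this theorem: it is imported verbatim from (\cite{sli}, Theorem 1), so there is no in-paper proof to compare against and your argument has to stand on its own. Much of it does. The swing-a-leaf identity $F(T)=F(T-\ell)+1+f_{T-\ell}(u)$ is correct; Step 2 is sound (by Lemma \ref{concave} an interior vertex of a path can never minimize $f$ along that path, so some spine endpoint has strictly smaller value, and the swing preserves the leaf count); the Step 3 difference $(n-k-1)(2^{k_1-1}-2^{k_2})$ is correctly computed and forces $|k_1-k_2|\le 1$; and the trace-on-the-spine count gives $2^k+k+\binom{n-k-1}{2}+(n-k-1)\bigl(2^{\lfloor k/2\rfloor}+2^{\lceil k/2\rceil}\bigr)$, which matches both branches of the stated formula.

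The genuine gap is Step 1, and it is not just a matter of writing the "inductive unpeeling" carefully: the move set you allow yourself is provably insufficient. Take $n=10$, $k=6$ and let $T$ be the tree with a center $v$ adjacent to $w_1,w_2,w_3$, each $w_i$ carrying two pendant leaves. No leaf-preserving application of Lemma \ref{effect-3} exists: the only pendant paths attached by an endpoint at a common vertex are the length-one pendants at the $w_i$, and grafting two of those destroys a leaf, while the components of $T-v$ are copies of $P_3$ attached at their \emph{centers}, so the lemma does not apply at $v$ at all. No swing is simultaneously leaf-preserving and $F$-decreasing either: removing a leaf $p$ from $w_1$ leaves $w_1$ internal, so preserving the count forces reattachment at an internal vertex of $T-p$; but $f_{T-p}(w_1)=52$, $f_{T-p}(w_2)=f_{T-p}(w_3)=64$ and $f_{T-p}(v)=75$, so $w_1$ is already the unique minimizer of $f_{T-p}$ over internal vertices and every admissible reattachment strictly increases $F$. (Reattaching $p$ at a leaf does decrease $F$, but drops the leaf count to $k-1$, and the moves that would restore a leaf push $F$ back up.) So your local moves stall at a non-extremal tree, and Step 1 needs a genuinely different perturbation — for instance the subtree-transfer lemmas Li and Wang develop in \cite{sli} — rather than a more careful ordering of swings and grafts.
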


\begin{theorem}\label{pmax-thm5}
For $2\leq k\leq n-2,$ if $G\in \mathfrak{H}_{n,k}$ then $F(G)\geq F(T(\lfloor\frac{k}{2}\rfloor,\lceil \frac{k}{2}\rceil,n-k))$ and equality happens if and only if $G= T(\lfloor\frac{k}{2}\rfloor,\lceil \frac{k}{2}\rceil,n-k).$
\end{theorem}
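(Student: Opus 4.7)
My plan is to reduce to the tree case already settled by Theorem \ref{pmax-thm4}. Write $D(n,k):=F(T(\lfloor k/2\rfloor,\lceil k/2\rceil,n-k))$. If $G\in\mathfrak{H}_{n,k}$ is itself a tree, the claim is immediate from Theorem \ref{pmax-thm4}, so the substantive case is when $G$ contains a cycle.

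For such a $G$ I would pick any spanning tree $T$ of $G$ and use two facts. First, $F(T)<F(G)$, because the edges of $G$ not in $T$ can be removed one by one while preserving connectivity, and each such removal destroys at least the two-vertex connected subgraph consisting of that single edge. Second, every pendant of $G$ remains a pendant of $T$, since its unique incident edge is a bridge of $G$ and hence lies in every spanning tree. So if $k'$ denotes the number of pendants of $T$, then $k\leq k'\leq n-1$ and $T\in\mathfrak{T}_{n,k'}$.

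To finish I split on $k'$. If $k'\leq n-2$, Theorem \ref{pmax-thm4} gives $F(T)\geq D(n,k')$; if $k'=n-1$, then $T=K_{1,n-1}$ and $F(T)=2^{n-1}+n-1$, which by Theorem \ref{tree-F} is the maximum core index among all trees on $n$ vertices and in particular exceeds $D(n,k)$. Granted the monotonicity
\[D(n,j+1)\ \geq\ D(n,j)\qquad\text{for every }2\leq j\leq n-3,\]
the first sub-case also yields $F(T)\geq D(n,k)$, so in either situation $F(G)>F(T)\geq D(n,k)$. Hence no non-tree attains equality, and Theorem \ref{pmax-thm4} then pins down the unique minimizer as $T(\lfloor k/2\rfloor,\lceil k/2\rceil,n-k)$.

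The hard part will be the monotonicity of $D(n,\cdot)$. Plugging the closed form of Theorem \ref{pmax-thm4} into $D(n,j+1)-D(n,j)$ and handling the two parity transitions $j$ even $\to$ $j+1$ odd and $j$ odd $\to$ $j+1$ even separately yields an expression whose dominant term is $2^j$, together with a term of shape $c(n,j)\,2^{\lfloor j/2\rfloor}$ for a small integer $c(n,j)$ and a remainder linear in $n$ and $j$. Since $2^j$ dominates for $j\geq 2$ throughout the admissible range and the few boundary values of $j$ at which $c(n,j)$ turns negative (namely $j$ close to $n-3$) can be inspected directly, the monotonicity follows. This is the only calculation of substance in the argument.
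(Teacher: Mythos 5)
Your argument is correct, and it reaches Theorem \ref{pmax-thm4} by a genuinely different reduction than the paper's. The paper also passes to a spanning tree $G'$ with $k'\geq k$ pendant vertices, but instead of comparing extremal \emph{values} it keeps transforming the tree itself: as long as $G'$ has more than $k$ pendant vertices, it has a vertex of degree at least $3$ with two paths attached, and one grafting step (Lemma \ref{effect-3}) merges those paths, lowering the pendant count by exactly one while strictly decreasing $F$; iterating lands in $\mathfrak{T}_{n,k}$, where Theorem \ref{pmax-thm4} finishes. That route needs no computation, treats the case $k'=n-1$ uniformly (grafting applies to the star as well), and yields your monotonicity statement as a free by-product in the strict form $D(n,j+1)>D(n,j)$. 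Your route instead applies Theorem \ref{pmax-thm4} at level $k'$ and pays for it with the monotonicity of $D(n,\cdot)$, which you only sketch; for the record it does check out: the closed form gives
\[
D(n,j+1)-D(n,j)=
\begin{cases}
(n-j-4)\,2^{j/2}+2^{j}-(n-j)+3, & j \text{ even},\\
(n-j-5)\,2^{(j-1)/2}+2^{j}-(n-j)+3, & j \text{ odd},
\end{cases}
\]
and both expressions are positive for all $2\leq j\leq n-3$ (the coefficient of the middle term is negative only when $n-j\in\{3,4\}$, where $2^{j}$ dominates, exactly as you predicted). So your proof is complete once that computation is written out; it is more computational than the paper's grafting argument, but it has the mild advantage of exhibiting explicitly that the minimum core index over $\mathfrak{T}_{n,k}$ is strictly increasing in $k$, a fact the paper only obtains implicitly.
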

\begin{proof}
Let $G \in \mathfrak{H}_{n,k}.$  Construct a spanning tree $G'$ from $G$ by deleting some edges. Clearly $F(G')<F(G)$. The number of pendent vertices of $G'$ is greater than or equal to $k$.  Suppose $G'$ has more than $k$ pendant vertices. Since $k\geq 2$, $G'$ has at least one vertex of degree greater than $2.$

Consider a vertex $v$ of $G'$ with $d(v)\geq 3$ and two paths $P_{l_1},P_{l_2},\; l_1\geq l_2$ attached at $v.$ Using grafting of edge operation on $G'$, we will get a new tree $\tilde{G}$ with number of pendant vertices one less than the number of pendant vertices of $G'$ and by Lemma \ref{effect-3}, $F(\tilde{G})<F(G').$ Continue this process till we get a tree with $k$ pendant vertices from $\tilde{G}.$ By Lemma \ref{effect-3}, every step in this process the core index will decrease. So, we will reach at a tree of order $n$ with $k$ pendant vertices and the result follows from Theorem \ref{pmax-thm4}.
\end{proof}

\begin{theorem}\label{pmax-thm6}
The lollipop graph $U_{n,3}^l$ minimizes the core index over $\mathfrak{H}_{n,1}.$ 
\end{theorem}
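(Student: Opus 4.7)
The plan is to reduce to the unicyclic results of Section 4 by passing from $G \in \mathfrak{H}_{n,1}$ to a carefully chosen spanning unicyclic subgraph and then invoking Theorems \ref{main-thm4} and \ref{main-thm5}. The key observation is that the unique pendant vertex of $G$ prevents any spanning unicyclic subgraph from being the cycle $C_n$, so the girth always lies in $\{3,4,\ldots,n-1\}$ and the lollipop bounds apply.

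First, fix $G \in \mathfrak{H}_{n,1}$ (with $n \geq 4$) and let $w$ be its unique pendant vertex and $u$ its neighbor. Because every tree on $n \geq 2$ vertices has at least two leaves, $G$ is not a tree and therefore contains a cycle. I would take any spanning tree $T$ of $G$; since $w$ has degree one in $G$, the edge $\{w,u\}$ must lie in $T$. Adjoining one non-tree edge of $G$ to $T$ produces a spanning unicyclic subgraph $G'$ of $G$. The added edge is not incident to $w$ (as $w$ has no other edge available in $G$), so $w$ remains a pendant of $G'$, and in particular $G' \not\cong C_n$. Consequently the girth $g$ of $G'$ satisfies $3 \leq g \leq n-1$.

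The remaining steps chain the established inequalities. From $G' \subseteq G$ we have $F(G) \geq F(G')$; from Theorem \ref{main-thm4} applied with girth $g$ we get $F(G') \geq F(U_{n,g}^l)$; and from Theorem \ref{main-thm5} we get $F(U_{n,g}^l) \geq F(U_{n,3}^l)$ for every admissible $g$. Stringing these together yields $F(G) \geq F(U_{n,3}^l)$, and the bound is attained by $U_{n,3}^l$ itself, which lies in $\mathfrak{H}_{n,1}$ since the only pendant of the lollipop is the far end of its attached path.

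The proof is conceptually straightforward, but the one subtlety worth flagging as the main point is this: the spanning unicyclic subgraph must be chosen so that $w$ remains a pendant, for otherwise the argument would only yield $F(G) \geq F(C_n)$, which is strictly weaker than $F(U_{n,3}^l)$ in the small cases (for instance $n = 5$, where $F(C_5) = 26 < 27 = F(U_{5,3}^l)$). Preserving the unique edge at $w$ inside the spanning tree is exactly what secures the stronger bound and allows the lollipop minimization of Theorem \ref{main-thm5} to be invoked uniformly in $n$.
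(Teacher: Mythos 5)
Your proof is correct and follows essentially the same route as the paper's: pass to a spanning unicyclic subgraph of $G$ (the paper deletes edges from all cycles but one, you add a single non-tree edge to a spanning tree) and then chain Theorems \ref{main-thm4} and \ref{main-thm5}. Your explicit observation that the unique pendant vertex survives in the unicyclic subgraph, forcing its girth to be at most $n-1$ so that the lollipop bounds apply, is a point the paper leaves implicit but is indeed needed.
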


\begin{proof}
Let $G\in \mathfrak{H}_{n,1}.$ Since $G$ is connected and has exactly one pendent vertex, it must contain a cycle. Let $C_g$ be a cycle in $G.$ If $G$ has more than one cycle, then construct a new graph $G'$ from $G$ by deleting edges from all cycles other than $C_g$ so that the graph remains connected. Clearly $F(G')<F(G)$ and $G'$ is a unicyclic graph on $n$ vertices with girth $g.$ By Theorem \ref{main-thm4} and Theorem \ref{main-thm5}, $F(U_{n,3}^l)\leq F(G')$ and equality happens if and only if $G' = U_{n,3}^l.$ As $U_{n,3}^l\in \mathfrak{H}_{n,1}$, so the result follows. 
\end{proof}

The only case left in the problem of minimizing the core index over $\mathfrak{H}_{n,k}$ is when $k=0.$ For $k=0,$  $n$ must be at least $3.$ The cycle $C_3$ is the only element of $\mathfrak{H}_{3,0}.$ With respect to isomorphism, there are only three connected  graphs on $4$ vertices without any pendant vertex. It can be easily checked that $C_4$ has  the minimum core index over $\mathfrak{H}_{4,0}.$ For the rest of this section, $n$ is at least $5.$

\begin{figure}[h]
\centering
\includegraphics[scale=.9]{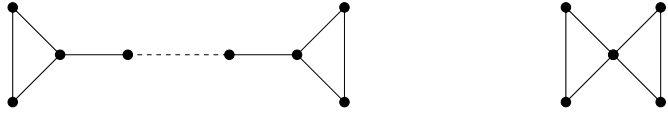}
\caption{The graphs $C_{3,3}^n$ and $C_{3,3}^5$}\label{fig:10}
\end{figure}

Let $m_1,m_2\geq 3$ be two integers. Let $n$ be an integer such that $n\geq m_1+m_2-1.$ Take a path on $n-(m_1+m_2)+2$ vertices and identify one pendant vertex of the path with a vertex of $C_{m_1}$ and another pendant vertex with a vertex of $C_{m_2}.$ If $n= m_1+m_2-1$  then identify one vertex of $C_{m_1}$ with one vertex of $C_{m_2}.$ We denote this graph by $C_{m_1,m_2}^n$ (see Figure \ref{fig:10}). Clearly $C_{m_1,m_2}^n$ is a connected graph on $n$ vertices without any pendant vertex. The next lemma compares the core index of $C_{3,3}^n$ and the cycle $C_n.$

\begin{lemma}\label{3cycles}
For $n\geq 6,$ $F(C_n)<F(C_{3,3}^n)$ if and only if $n \leq 16$.
\end{lemma}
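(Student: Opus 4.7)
My plan is to derive a closed-form expression for $F(C_{3,3}^n)$, compare it to $F(C_n)=n^2+1$ given by Lemma \ref{cycle-F}, and reduce the claim to a quadratic inequality in $n$.

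I would decompose $C_{3,3}^n$ at the cut vertex $u$ of the first triangle lying on the connecting path. This writes $C_{3,3}^n = A \cup R$, where $A \cong C_3$ and $R$ is the lollipop graph $U_{n-2,3}^l$ with $u$ as its pendant endpoint. Applying the glue-at-a-vertex identity
\[
F(X \cup Y) = F(X) + F(Y) - 1 + (f_X(w) - 1)(f_Y(w) - 1)
\]
(the same decomposition used inside the proof of Lemma \ref{effect-1}) together with the known values $F(C_3) = 10$ and $f_{C_3}(u) = 7$ from Lemma \ref{cycle-F}, and $F(U_{n-2,3}^l) = \tfrac{(n-5)(n+10)}{2} + 10$ from Theorem \ref{main-thm4}, it remains to compute $f_R(u)$.

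Any connected subgraph of $R$ containing the pendant endpoint $u$ is either a subpath of the stick starting at $u$, or else contains the full stick together with a nontrivial connected subgraph of the terminal triangle at its attachment vertex $v$. Since the stick has $n-4$ vertices and the triangle contributes $f_{C_3}(v) - 1 = 6$ nontrivial options, I get $f_R(u) = (n-4) + 6 = n+2$. Substituting everything and simplifying should yield
\[
F(C_{3,3}^n) = \frac{n(n+17)}{2}.
\]
The comparison
\[
F(C_{3,3}^n) - F(C_n) = \frac{n(n+17)}{2} - (n^2+1) = \frac{-n^2 + 17n - 2}{2}
\]
is positive precisely when $n^2 - 17n + 2 < 0$, i.e.\ when $n < \tfrac{17 + \sqrt{281}}{2} \approx 16.88$. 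Combined with the hypothesis $n \geq 6$, this gives $F(C_n) < F(C_{3,3}^n)$ exactly for $6 \leq n \leq 16$, as claimed.

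I do not anticipate any serious conceptual obstacle; the work is essentially bookkeeping. The main pitfall is in the computation of $f_R(u)$, where one must avoid double-counting the full stick of $R$ (which appears both as the longest subpath and as the \emph{trivial} extension into the triangle via $\{v\}$), which is why only the $f_{C_3}(v) - 1 = 6$ nontrivial triangle-extensions are added. A sanity check at $n = 6$ (where the stick collapses to a single edge $uv$ and the formula predicts $F(C_{3,3}^6) = 69$ versus $F(C_6) = 37$) gives quick confidence in the arithmetic.
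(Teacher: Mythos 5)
Your proof is correct and follows essentially the same route as the paper's: decompose $C_{3,3}^n$ into a triangle plus a smaller lollipop, obtain the closed form $F(C_{3,3}^n)=\tfrac{n^2+17n}{2}$, and compare with $F(C_n)=n^2+1$ via the quadratic $n^2-17n+2<0$. The only (harmless) difference is that you glue at the cut vertex rather than across the bridge as the paper does, which lets you treat $n=6$ uniformly instead of as a separate case.
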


\begin{proof}
For $n=6,$ let $v$ and $w$ be two vertices of $C_{3,3}^6$ with $d(v)=d(w)=3.$ Then $F(C_{3,3}^6)=F(C_3)+F(C_3)+f_{C_3}(v)f_{C_3}(w)>37=F(C_n).$ For $n\geq 7,$ let $v$  be a vertex of one of the  $3$-cycles with degree $3$ and let $w$ be the vertex adjacent to $v$ but not in that $3$-cycle of $C_{3,3}^n,$. Then

\begin{align*}
F(C_{3,3}^n)&=F(C_3)+F(U_{n-3,3}^l)+f_{C_3}(v)f_{U_{n-3,3}^l}(w)\\
&=10+\left(\frac{n-6}{2}\right)(n+9)+10+7(n+1)\\
&=\frac{n^2+17n}{2}
\end{align*}
So,
$F(C_{3,3}^n)-F(C_n)=\frac{n^2+17n}{2}-n^2-1=\frac{17n-n^2-2}{2}>0 $ if and only if  $n\leq 16.$ This completes the proof.
\end{proof}

\begin{lemma}\label{twocycles}
Let $m_1,m_2\geq 3$ be two integer and let $n=m_1+m_2-1.$  Then $F(C_n)<F(C_{m_1,m_2}^n)$.
\end{lemma}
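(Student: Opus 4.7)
The plan is to exploit the fact that when $n = m_1+m_2-1$ the graph $C_{m_1,m_2}^n$ is obtained by identifying one vertex of $C_{m_1}$ with one vertex of $C_{m_2}$, so its connected subgraphs split cleanly at the identification vertex, and then to compare with the closed-form $F(C_n)=n^2+1$ from Lemma \ref{cycle-F}.

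First I would let $v$ denote the common vertex of $C_{m_1}$ and $C_{m_2}$ in $C_{m_1,m_2}^n$, and classify connected subgraphs by whether or not they contain $v$. Every connected subgraph not containing $v$ lies entirely in $C_{m_1}-v$ or entirely in $C_{m_2}-v$; and every connected subgraph containing $v$ arises uniquely as the (identified) union of a connected subgraph of $C_{m_1}$ through $v$ with a connected subgraph of $C_{m_2}$ through $v$. This gives
\[
F(C_{m_1,m_2}^n)\;=\;F(C_{m_1})+F(C_{m_2})-1+(f_{C_{m_1}}(v)-1)(f_{C_{m_2}}(v)-1),
\]
the $-1$ accounting for the single-vertex subgraph $\{v\}$ being counted twice.

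Next I would substitute the values from Lemma \ref{cycle-F}, namely $F(C_{m_i})=m_i^2+1$ and $f_{C_{m_i}}(v)=2m_i+\binom{m_i-1}{2}$. A short simplification shows the convenient identity $f_{C_{m_i}}(v)-1=\binom{m_i+1}{2}=\tfrac{m_i(m_i+1)}{2}$, so
\[
F(C_{m_1,m_2}^n)\;=\;m_1^2+m_2^2+1+\frac{m_1 m_2(m_1+1)(m_2+1)}{4}.
\]
Subtracting $F(C_n)=(m_1+m_2-1)^2+1$ and expanding $(m_1+m_2-1)^2$ leaves
\[
F(C_{m_1,m_2}^n)-F(C_n)\;=\;-2m_1m_2+2m_1+2m_2-1+\frac{m_1m_2(m_1+1)(m_2+1)}{4}.
\]

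Finally I would note that when $m_1,m_2\geq 3$ we have $(m_1+1)(m_2+1)\geq 16$, so the quartic term satisfies $\tfrac{m_1m_2(m_1+1)(m_2+1)}{4}\geq 4m_1m_2$, which already dominates $-2m_1m_2$. Hence
\[
F(C_{m_1,m_2}^n)-F(C_n)\;\geq\;2m_1m_2+2m_1+2m_2-1\;>\;0,
\]
which is the desired inequality. There is essentially no obstacle here; the only delicate point is the algebraic identity $f_{C_m}(v)-1=\binom{m+1}{2}$, which makes the calculation clean and removes any need for a case analysis on $n$.
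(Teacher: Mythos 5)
Your proposal is correct and follows essentially the same route as the paper: the same cut-vertex decomposition $F(C_{m_1,m_2}^n)=F(C_{m_1})+F(C_{m_2})-1+(f_{C_{m_1}}(v)-1)(f_{C_{m_2}}(v)-1)$, the same substitution from Lemma \ref{cycle-F}, and the same final bound $(f_{C_{m_1}}(v)-1)(f_{C_{m_2}}(v)-1)=\tfrac{m_1m_2(m_1+1)(m_2+1)}{4}\geq 4m_1m_2$ yielding $F(C_{m_1,m_2}^n)-F(C_n)\geq 2m_1m_2+2m_1+2m_2-1>0$. The explicit identity $f_{C_m}(v)-1=\binom{m+1}{2}$ is a nice touch but does not change the argument.
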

\begin{proof}
 Let $v$ be the vertex of degree $4$ in $C_{m_1,m_2}^n.$ Then
\begin{align*}
F(C_{m_1,m_2}^n)&=F(C_{m_1})+F(C_{m_2})-1+(f_{C_{m_1}}(v)-1)(f_{C_{m_2}}(v)-1)\\
&=m_1^2+1+m_2^2+1-1+\left(2m_1+ {m_1-1 \choose 2} -1\right)\left(2m_2+ {m_2-1 \choose 2} -1\right)\\
&\geq m_1^2+m_2^2+1+4m_1m_2.
\end{align*}

So, the difference $F(C_{m_1,m_2}^n)-F(C_n) \geq 2m_1m_2+2m_1+2m_2-1>0.$
\end{proof}

\begin{corollary}\label{ctwo}
Let $m_1,m_2\geq 3$ be two integer and let $n=m_1+m_2-1.$ Let $G\in \mathfrak{H}_{n,0}$ with $C_{m_1,m_2}^n$ as a subgraph of $G.$ Then $F(G)>F(C_n).$
\end{corollary}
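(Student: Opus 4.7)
The plan is a one-line chaining argument that packages Lemma~\ref{twocycles} with the monotonicity of $F$ under subgraph inclusion. The crucial preliminary observation is purely a counting one: because $n=m_1+m_2-1$, the graph $C_{m_1,m_2}^n$ already has exactly $n$ vertices (the two cycles share a common vertex and no path is interposed), so the hypothesis ``$C_{m_1,m_2}^n$ is a subgraph of $G$'' forces $C_{m_1,m_2}^n$ to be a \emph{spanning} subgraph of $G$.

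With this recognized, the proof splits into two trivial ingredients. First, I would record the general fact that if $H$ is a subgraph of $G$, then every connected subgraph of $H$ is a connected subgraph of $G$, so $F(H)\leq F(G)$; applied with $H=C_{m_1,m_2}^n$, this yields $F(C_{m_1,m_2}^n)\leq F(G)$. Second, Lemma~\ref{twocycles} supplies the strict inequality $F(C_n)<F(C_{m_1,m_2}^n)$. Chaining the two gives $F(C_n)<F(G)$, which is exactly the claim.

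There is essentially no obstacle here: all of the genuine content lives in Lemma~\ref{twocycles}, and the corollary only wraps it in the elementary monotonicity principle. The only point that deserves an explicit remark in the write-up is the identity $|V(C_{m_1,m_2}^n)|=m_1+m_2-1=n$, which certifies that the subgraph in the hypothesis is spanning and hence licenses the inequality $F(C_{m_1,m_2}^n)\leq F(G)$.
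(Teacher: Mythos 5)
Your argument is correct and is exactly the intended one: the paper states this as an immediate corollary of Lemma~\ref{twocycles} combined with the monotonicity $F(H)\leq F(G)$ for a subgraph $H$ of $G$, which is precisely your chaining $F(C_n)<F(C_{m_1,m_2}^n)\leq F(G)$. (The spanning observation is harmless but not needed, since monotonicity holds for any subgraph.)
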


\begin{lemma}\label{24}
Let $u$ be the pendant vertex  and $v$ be a non-pendant vertex of the unicyclic graph $U_{n,g}^l$. Then $f_{U_{n,g}^l}(u)<f_{U_{n,g}^l}(v)$.
\end{lemma}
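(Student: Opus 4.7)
The plan is to compute $f_{U_{n,g}^l}(v)$ explicitly for each class of vertex and compare with $f_{U_{n,g}^l}(u)$. Label the cycle vertices $c_1,c_2,\ldots,c_g$ so that $c_1$ is the (unique) vertex of degree three where the tail attaches, and label the tail vertices $c_1=w_0,w_1,\ldots,w_{n-g}=u$. The key structural observation is that the cycle and the tail share only the vertex $c_1$, so any connected subgraph $H$ of $U_{n,g}^l$ splits canonically as $H=H_C\cup H_P$, where $H_C=H\cap C_g$ is a connected subgraph of $C_g$ and $H_P$ is a (possibly empty) contiguous piece of the tail. The only constraint linking the two pieces is that if $H_P$ contains some $w_j$ with $j\geq 1$, then $c_1$ must lie in $H_C$, since otherwise $H$ would be disconnected.

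First I will count $f(u)$: a connected subgraph containing $u$ is either an initial segment of the tail ending at $u$ that does not reach $c_1$ (giving $n-g$ subgraphs), or else it contains the entire tail together with an arbitrary connected subgraph of $C_g$ through $c_1$ (giving $f_{C_g}(c_1)=2g+\binom{g-1}{2}$ contributions by Lemma~\ref{cycle-F}). So $f(u)=(n-g)+f_{C_g}(c_1)$. The same decomposition, split according to whether $H_P$ is empty and whether $H_C$ contains $c_1$, gives
\begin{align*}
f(w_j) &= (n-g-j+1)\bigl(f_{C_g}(c_1)+j\bigr), \quad 0\leq j\leq n-g-1,\\
f(c_i) &= f_{C_g}(c_i)+(n-g)\,f_{C_g}(c_1,c_i), \quad 2\leq i\leq g.
\end{align*}

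A routine subtraction gives $f(w_j)-f(u)=(n-g-j)\bigl(f_{C_g}(c_1)+j-1\bigr)$, which is strictly positive because $n-g-j\geq 1$ and $f_{C_g}(c_1)\geq 7$ (the minimum is attained at $g=3$). Using the rotational symmetry $f_{C_g}(c_i)=f_{C_g}(c_1)$ of the cycle, I also obtain $f(c_i)-f(u)=(n-g)\bigl(f_{C_g}(c_1,c_i)-1\bigr)$. The crux of the proof is then showing $f_{C_g}(c_1,c_i)\geq 2$, which is immediate: the short and long arcs of $C_g$ joining $c_1$ to $c_i$ are two distinct connected subgraphs of $C_g$ containing both vertices. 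Combined with $n-g\geq 1$ (from the lollipop definition), this yields $f(c_i)>f(u)$.

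The main obstacle I foresee is not any hard inequality but keeping the decomposition bookkeeping clean: one must justify that $H_C$ is forced to be a \emph{connected} subgraph of $C_g$ (here using that the tail, being a dead-end attached only at $c_1$, cannot supply any alternative route between two cycle vertices), and one must handle the boundary case $n-g=1$ separately, in which the tail collapses to a single edge $c_1 u$ and the intermediate tail vertices $w_1,\ldots,w_{n-g-1}$ are absent, so the comparison only needs to be checked against $c_1,c_2,\ldots,c_g$.
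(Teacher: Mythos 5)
Your proof is correct and takes essentially the same approach as the paper: both decompose each connected subgraph at the cut vertex $c_1$ into a cycle part and a tail part, compute $f$ explicitly for the pendant vertex, the tail vertices, and the cycle vertices, and reduce the comparison to $f_{C_g}(c_1,c_i)\geq 2$ (which the paper leaves implicit but you rightly justify via the two arcs). Your closed-form expressions agree with the paper's formulas after substituting $f_{P_{n-g}}(g+1)=n-g$ and $f_{P_{n-g}}(g+1,w_j)=n-g-j+1$.
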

\begin{proof}
Let $g$ be the vertex of degree $3$ in $U_{n,g}^l$ and let $g+1$ be the  vertex adjacent to $g$ not on the $g$-cycle of $U_{n,g}^l$. Then $$f_{U_{n,g}^l}(u)=f_{P_{n-g}}(u)+f_{C_g}(g).$$

If $v$ is a vertex on the cycle $C_g$ of $U_{n,g}^l$ then

$$f_{U_{n,g}^l}(v)=f_{C_g}(v)+f_{C_g}(v,g)f_{P_{n-g}}(g+1)$$

and if $w$ is a non pendant vertex  of $U_{n,g}^l$ which is not on the cycle then

$$f_{U_{n,g}^l}(w)=f_{P_{n-g}}(w)+f_{C_g}(g)f_{P_{n-g}}(g+1,w).$$

Since $f_{P_{n-g}}(g+1)=f_{P_{n-g}}(u)$ and $f_{P_{n-g}}(w)\geq f_{P_{n-g}}(u),$ so
\begin{align*}
f_{U_{n,g}^l}(v)-f_{U_{n,g}^l}(u)&=f_{P_{n-g}}(u)(f_{C_g}(v,g)-1)>0.\\
f_{U_{n,g}^l}(w)-f_{U_{n,g}^l}(u)&=f_{P_{n-g}}(w)-f_{P_{n-g}}(u)+f_{C_g}(g)(f_{P_{n-g}}(g+1,w)-1)>0.
\end{align*}
\end{proof}

\begin{lemma}\label{3uni1}
Let $u$ be a vertex of a connected graph $G$ on at least two vertices. Suppose $v$ is the pendant vertex of $U_{n,g}^l$ and $w$ is a non-pendant vertex of $U_{n,g}^l$. Let $G_1$ be the graph obtained from $G$ and $U_{n,g}^l$ by identifying $u$ with $v$ and $G_2$ be the graph obtained by identifying $u$ with $w$. Then $F(G_1)<F(G_2).$ 
\end{lemma}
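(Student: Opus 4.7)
The plan is to reduce the comparison $F(G_1)$ versus $F(G_2)$ to the vertex-count inequality already established in Lemma \ref{24}, by partitioning the connected subgraphs of each composite graph according to whether or not they contain the identified vertex.

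First I would record the standard identification formula. If a graph $H$ is obtained from two graphs $A$ and $B$ by identifying $a \in V(A)$ with $b \in V(B)$ to form a single vertex $z$, then splitting the subgraphs of $H$ into those avoiding $z$ and those containing $z$ gives
\[
F(H) \;=\; F(A) + F(B) - 1 + \bigl(f_A(a) - 1\bigr)\bigl(f_B(b) - 1\bigr),
\]
since the subgraphs through $z$ are in bijection with pairs consisting of a connected subgraph of $A$ through $a$ and one of $B$ through $b$, while the subgraphs avoiding $z$ are the subgraphs of $A$ avoiding $a$ together with those of $B$ avoiding $b$, and the trivial subgraph $\{z\}$ would otherwise be counted twice.

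Applying this formula to $G_1$ (with $a=u$, $b=v$) and to $G_2$ (with $a=u$, $b=w$), the shared terms $F(G)$, $F(U_{n,g}^l)$, $-1$, and the factor $f_G(u)-1$ all cancel, leaving
\[
F(G_2) - F(G_1) \;=\; \bigl(f_G(u) - 1\bigr)\bigl(f_{U_{n,g}^l}(w) - f_{U_{n,g}^l}(v)\bigr).
\]
Since $G$ is connected on at least two vertices, $u$ has at least one neighbor, so the subgraphs $\{u\}$ and $\{u,u'\}$ witness $f_G(u) \geq 2$, making the first factor strictly positive. The second factor is strictly positive by Lemma \ref{24}, which guarantees $f_{U_{n,g}^l}(w) > f_{U_{n,g}^l}(v)$ whenever $w$ is non-pendant and $v$ is pendant in $U_{n,g}^l$.

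There is essentially no obstacle here: the entire argument is a one-line consequence of the identification formula together with Lemma \ref{24}. The only point meriting explicit comment is verifying $f_G(u) \geq 2$, which uses the hypothesis that $|V(G)| \geq 2$ and that $G$ is connected; without these assumptions the conclusion could degenerate to equality. I would conclude by combining the two positivities to obtain $F(G_1) < F(G_2)$.
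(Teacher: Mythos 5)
Your proof is correct and follows essentially the same route as the paper: both apply the identification formula $F(H)=F(A)+F(B)-1+(f_A(a)-1)(f_B(b)-1)$ to $G_1$ and $G_2$, cancel the common terms to get $F(G_2)-F(G_1)=(f_G(u)-1)(f_{U_{n,g}^l}(w)-f_{U_{n,g}^l}(v))$, and invoke Lemma \ref{24} for positivity of the second factor. Your explicit justification of the identification formula and of $f_G(u)\geq 2$ is a welcome addition that the paper leaves implicit.
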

\begin{proof}
 We have
\begin{align*}
F(G_1)&=F(G)+F(U_{n,g}^l)-1+(f_{G}(u)-1))(f_{U_{n,g}^l}(v)-1)\\
F(G_2)&=F(G)+F(U_{n,g}^l)-1+(f_{G}(u)-1)(f_{U_{n,g}^l}(w)-1).
\end{align*}
So, the difference
\begin{align*}
F(G_2)-F(G_1)=(f_{G}(u)-1)(f_{U_{n,g}^l}(w)-f_{U_{n,g}^l}(v))>0 .
\end{align*}
The last inequality follows from Lemma \ref{24}.
\end{proof}

\begin{theorem}\label{mainp-1}
Let $G\in \mathfrak{H}_{n,0}$ with at least one cut-vertex. Suppose $C_{m_1,m_2}^n$ with $m_1+m_2-1=n$ is  not a subgraph of $G$. Then $F(G)\geq F(C_{g_1,g_2}^n)$ for some $g_1,g_2\geq 3$ and the equality holds if and only if $G=C_{g_1,g_2}^n.$
\end{theorem}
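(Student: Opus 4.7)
The plan is to transform $G$ into some $C_{g_1, g_2}^n$ via a sequence of operations, each strictly decreasing the core index $F$, so that $F(G) \geq F(C_{g_1, g_2}^n)$ with equality if and only if no operation is applied, i.e., $G$ already equals $C_{g_1, g_2}^n$.

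First, I would reduce every $2$-connected block of $G$ that is not a cycle to a cycle, by iteratively deleting an edge of the block whose removal preserves $2$-connectivity of that block. Such an edge exists in any $2$-connected graph that is not itself a cycle (a standard consequence of ear decompositions). Each deletion keeps $G$ connected, preserves pendant-freeness (the reduced block is still $2$-connected so its vertices retain degree $\geq 2$ within the block), preserves at least one cut vertex of $G$ (deletions inside one block do not affect cut vertices lying in or between other blocks), and strictly decreases $F$ (since the deleted edge lies in at least one connected subgraph). After this phase every block of $G$ is a cycle or a bridge, and pendant-freeness forces every leaf block to be a cycle.

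Since edge deletions cannot create subgraphs, the hypothesis that $G$ does not contain $C_{m_1,m_2}^n$ with $m_1+m_2-1=n$ as a subgraph is preserved throughout. If the reduced $G$ has exactly two cycle blocks then they cannot meet at a single cut vertex (that is the excluded form), so they must be joined by a non-trivial path of bridge blocks; the block tree is then a path, $G = C_{g_1,g_2}^n$ for some $g_1, g_2 \geq 3$, and the equality case holds. Otherwise $G$ has either at least three cycle blocks or branching in its block tree, and I would pick a cut vertex $v$ and decompose $G$ into two connected pieces $A, B$ sharing only $v$. In the favorable case where $v$ is pendant in $A$ (i.e., $v$ has a unique edge in $A$, a bridge of $G$), the piece $A$ lies in $\mathfrak{H}_{|A|,1}$ and Theorem~\ref{pmax-thm6} gives $F(A) \geq F(U_{|A|,3}^l)$; Lemma~\ref{3uni1} then says replacing $A$ by $U_{|A|,3}^l$ attached to $B$ at its pendant vertex strictly decreases $F$, since the pendant is the vertex of smallest $f$-value on $U_{|A|,3}^l$ (Lemma~\ref{24}). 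If instead $v$ has degree $\geq 2$ on both sides, I would first use grafting (Lemma~\ref{effect-3}) together with the path-consolidation Lemma~\ref{effect-5} to reshape branching paths and expose a cut vertex of the favorable form. Iterating these $F$-decreasing moves terminates in a graph of the form $C_{g_1, g_2}^n$.

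The main obstacle is making this final step rigorous: identifying at each iteration the correct lollipop substructure inside $G$ on which to apply Lemma~\ref{3uni1}, handling the case where no cut vertex of $G$ is immediately of the favorable "pendant in one side" type, and verifying that all intermediate graphs remain in $\mathfrak{H}_{n,0}$ and retain a cut vertex so that the iteration can continue without getting stuck before reaching a $C_{g_1,g_2}^n$.
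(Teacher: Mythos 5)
Your overall strategy (an $F$-decreasing sequence of perturbations terminating at some $C_{g_1,g_2}^n$) is in the right spirit, but two steps have genuine gaps. First, your Phase~1 rests on a false claim: it is not true that every $2$-connected non-cycle graph has an edge whose deletion preserves $2$-connectivity. Take $K_{2,3}$, or any theta graph whose three $u$--$v$ paths all have length at least $2$: deleting any single edge leaves a vertex of degree $1$, so the result is not even pendant-free, let alone $2$-connected. (The correct ear-decomposition statement lets you remove a whole ear, not a single edge of a long ear.) So you cannot reduce each block to a cycle while staying inside $\mathfrak{H}_{n,0}$; you must allow pendant vertices to appear temporarily and then repair them. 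This is exactly what the paper does: it fixes two cycles $C_{g_1},C_{g_2}$ of $G$ sharing at most one vertex at the outset, deletes edges off these cycles down to $n+1$ edges (accepting intermediate pendant vertices), and then cleans up the hanging trees with grafting (Lemma~\ref{effect-3}) and path consolidation (Lemma~\ref{effect-5}).

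Second, the step you yourself flag as the ``main obstacle'' is not merely a matter of bookkeeping --- the cited results do not support it. If $G$ is obtained by identifying a vertex $v$ of $A$ with a vertex of $B$, then $F(G)=F(A)+F(B)-1+(f_A(v)-1)(f_B(v)-1)$, so replacing $A$ by the lollipop $U^l_{|A|,3}$ attached at its pendant vertex decreases $F$ only if you control \emph{both} $F(A)\geq F(U^l_{|A|,3})$ (which Theorem~\ref{pmax-thm6} gives) \emph{and} $f_A(v)\geq f_{U^l_{|A|,3}}(\text{pendant})$, which neither Theorem~\ref{pmax-thm6} nor Lemma~\ref{24} provides: Lemma~\ref{24} compares $f$-values of two vertices of the \emph{same} lollipop, and Lemma~\ref{3uni1} only compares two attachment sites on a \emph{fixed} lollipop, not a wholesale substitution of $A$. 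The paper avoids needing any such cross-graph $f$-comparison: having reduced $G$ to the two chosen cycles plus one attached path, it only ever uses Lemma~\ref{3uni1} to relocate where that single path meets the rest, which lands directly on $C_{g_1,g_2}^n$. As written, your argument does not close, and the missing substitution inequality is the crux rather than a technicality.
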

\begin{proof}
Since $G$ has a cut-vertex and no pendant vertices, so $G$ contains two cycles with at most one common vertex. Let $C_{g_1}$ and $C_{g_2}$ be two cycles of $G$ with at most one common vertex. Since $C_{m_1,m_2}^n$ with $m_1+m_2-1=n$ is  not a subgraph of $G$, so $g_1+g_2\leq n.$ Clearly $G$ has at least $n+1$ edges.

If $G$ has exactly $n+1$ edges, then there is no common vertex between $C_{g_1}$ and $C_{g_2}$ and $G=C_{g_1,g_2}^n.$ So, let $G$ has at least $n+2$ edges. Suppose $|E(G)|=n+k$, where $k\geq 2.$ Choose $k-1$ edges $\{e_1,\ldots,e_{k-1}\}\subset E(G)$ such that $e_i\notin E(C_{g_1})\cup E(C_{g_2}),\;i=1,\ldots, k-1$ and $G\setminus \{e_1,\ldots,e_{k-1}\}$ is connected. Let $G_1=G\setminus \{e_1,\ldots,e_{k-1}\}$ ($G_1$ may have some pendant vertices). Then $F(G_1)< F(G).$ If $G_1$ has no pendant vertices then $G_1=C_{g_1,g_2}^n.$

Let $G_1$ has some pendant vertices. Then for some $l<n,$ $C_{g_1,g_2}^l$ is a subgraph of $G_1.$ By grafting of edges operation(if required), we can form a new graph $G_2$ from $G_1$ where $G_2$ is a connected graph on $n$ vertices obtained by attaching some paths to some vertices of $C_{g_1,g_2}^l.$ Then by Lemma \ref{effect-3}, $F(G_2)< F(G_1).$ If more than one paths are attached to different vertices of $C_{g_1,g_2}^l$ in $G_2$, then using the graph operation as mentioned in Lemma \ref{effect-5}, form a new graph $G_3$ from $G_2$, where $G_3$ has exactly one path attached to $C_{g_1,g_2}^l.$ Then by Lemma \ref{effect-5}, $F(G_3)< F(G_2).$ 

Let the path attached to the vertex $u$ in $C_{g_1,g_2}^l$ of $G_3.$ Then we have two cases:\\
\textbf{Case-1:} $u \in V(C_{g_1})\cup V(C_{g_2})$\\
Without loss of generality, assume that $u\in V(C_{g_1}).$ Then the induced subgraph of $G_3$ containing the vertices of $C_{g_1}$ and the vertices of the path attached to it,  is the lollipop graph $U_{k,g_1}^l$ for some $k>g_1.$ Let $v$ be the pendant vertex of $U_{k,g_1}^l$. Since the two cycles $C_{g_1}$ and $C_{g_2}$ have at most one vertex in common, so we have two subcases:

\underline{Subcase-1}: $V(C_{g_1})\cap V(C_{g_2})=\{w\}$ 

Let $H_1$ be the induced subgraph of $G_3$ containing the vertices $\{V(G_3)\setminus V(U_{k,g_1}^l)\}\cup \{w\}.$ Clearly $H_1$ is the cycle $C_{g_2}.$ Then identify the vertex $v$ of $U_{k,g_1}^l$ with the vertex $w$ of $H_1$ to form a new graph $G_4.$  By Lemma \ref{3uni1}, $F(G_4)< F(G_3)$ and $G_4$ is the graph $C_{g_1,g_2}^n.$

\underline{Subcase-2}: $V(C_{g_1})\cap V(C_{g_2})=\phi$

Let $H_2$ be the induced subgraph of $G_3$ containing the vertices $V(G_3)\setminus V(U_{k,g_1}^l).$ In $G_3$ exactly one vertex $w_1\in U_{k,g_1}^l$ adjacent to exactly one vertex $w_2$ of $H_2.$ Form a new graph $G_5$ from $G_3$ by deleting the edge $\{w_1,w_2\}$ and adding the edge $\{v,w_2\}.$ By Lemma \ref{3uni1}, $F(G_5)< F(G_3)$ and $G_5$ is the graph $C_{g_1,g_2}^n.$\\

\textbf{Case-2:} $u \notin V(C_{g_1})\cup V(C_{g_2})$\\
Let $w$ be the pendant vertex of $G_3$ and let $w_3$  be a vertex in $C_{g_1,g_2}^l$ of $G_3$ adjacent to $u.$ Form a new graph $G_6$ from $G_3$ by deleting the edge $\{u,w_3\}$ and adding the edge $\{w,w_3\}.$ By Lemma \ref{3uni1}, $F(G_6)< F(G_3)$ and $G_6$ is the graph $C_{g_1,g_2}^n.$ This completes the proof.
\end{proof}

\begin{lemma}\label{3uni2}
Let $u$ be a vertex of a connected graph $G.$ For $m\geq 4,$ let $G_1$ be the graph obtained by identifying the vertex $u$ of $G$ with the pendant vertex of $U_{m+1,m}^l$  and $G_2$ be the graph obtained by identifying the vertex $u$ with the pendant vertex of $U_{m+1,3}^l$. Then $F(G_2)<F(G_1)$.
\end{lemma}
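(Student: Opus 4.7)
The plan is to compare $F(G_1)$ and $F(G_2)$ via the bridge decomposition at the identified cut vertex. Write $H_1 = U_{m+1,m}^l$ and $H_2 = U_{m+1,3}^l$, and let $v_i$ denote the pendant vertex of $H_i$ that is identified with $u$. The standard identity for a graph built from two pieces sharing a single vertex gives
\begin{align*}
F(G_i) = F(G) + F(H_i) - 1 + (f_G(u)-1)(f_{H_i}(v_i) - 1), \quad i = 1, 2,
\end{align*}
so that
\begin{align*}
F(G_1) - F(G_2) = \bigl(F(H_1) - F(H_2)\bigr) + (f_G(u)-1)\bigl(f_{H_1}(v_1) - f_{H_2}(v_2)\bigr).
\end{align*}
It therefore suffices to show both bracketed differences are strictly positive for $m \geq 4$.

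For the first difference, I would plug $(n,g) = (m+1,m)$ and $(n,g) = (m+1,3)$ into the closed form $F(U_{n,g}^l) = \frac{n-g}{2}(n+g^2+3) + g^2 + 1$ from Theorem \ref{main-thm4}. A short simplification should give $F(H_1) - F(H_2) = (m-2)(m-3)$, which is positive for $m \geq 4$.

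For the second difference I count directly. In $H_1$ the pendant $v_1$ is adjacent to a single cycle vertex $w$, so a connected subgraph through $v_1$ is either $\{v_1\}$ alone or $\{v_1,w\}$ extended by a connected subgraph of $C_m$ through $w$; Lemma \ref{cycle-F} then gives $f_{H_1}(v_1) = 1 + 2m + \binom{m-1}{2}$. In $H_2$ the pendant $v_2$ is the far endpoint of a path of $m-2$ edges meeting the triangle at a vertex $w'$; splitting by whether the subgraph reaches $w'$ yields $m-2$ initial path segments on one hand, and the full path combined with any of the $f_{C_3}(w') = 7$ connected subgraphs of $C_3$ through $w'$ on the other. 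Hence $f_{H_2}(v_2) = m+5$, and the difference $f_{H_1}(v_1) - f_{H_2}(v_2) = (m-4) + \binom{m-1}{2}$ is strictly positive for $m \geq 4$.

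The main obstacle will be the bookkeeping in the enumeration of $f_{H_2}(v_2)$: one has to read off the right path length (the nontrivial tree $T_1$ has $n-g+1 = m-1$ vertices, so the pendant sits $m-2$ edges from the cycle) and avoid double counting $\{v_2\}$ when partitioning by whether the triangle is reached. Once both positivity claims are in place, they combine to give $F(G_1) > F(G_2)$, with the degenerate case $f_G(u) = 1$ (that is, $G$ a single vertex) handled by the first difference alone.
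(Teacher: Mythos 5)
Your proposal is correct and follows essentially the same route as the paper: both decompose $F(G_i)$ at the cut vertex via $F(G_i)=F(G)+F(H_i)-1+(f_G(u)-1)(f_{H_i}(v_i)-1)$, and both reduce the claim to the positivity of $F(H_1)-F(H_2)$ and of $f_{H_1}(v_1)-f_{H_2}(v_2)=m-4+\binom{m-1}{2}$, with identical counts for the two pendant-vertex values. The only cosmetic difference is that you evaluate $F(H_1)-F(H_2)=(m-2)(m-3)$ explicitly from the closed form in Theorem \ref{main-thm4}, whereas the paper just cites the strict inequality from Theorem \ref{main-thm5}; your explicit handling of the degenerate case $f_G(u)=1$ is a small but welcome tightening of the paper's final ``$>0$''.
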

\begin{proof}
We have
\begin{align*}
F(G_1)&=F(G)+F(U_{m+1,m}^l)-1+(f_G(u)-1)(f_{U_{m+1,m}^l}(u)-1)\\
F(G_2)&=F(G)+F(U_{m+1,3}^l)-1+(f_G(u)-1)(f_{U_{m+1,3}^l}(u)-1).
\end{align*}
By Theorem \ref{main-thm5}, $F(U_{m+1,3}^l)<F(U_{m+1,m}^l)$. So, the difference 
\begin{align*}
F(G_1)-F(G_2)&>(f_G(u)-1)(f_{U_{m+1,m}^l}(u)-f_{U_{m+1,3}^l}(u))\\
&=(f_G(u)-1)\left(1+2m+{m-1 \choose 2}-m+2-7\right) \\ 
&=(f_G(u)-1)\left(m-4+{m-1 \choose 2}\right)\\
&>0
\end{align*}
\end{proof}

\begin{corollary}\label{c33}
Let $m_1,m_2\geq 3$ be two integers and let $m_1+m_2\leq n.$ Then $F(C_{m_1,m_2}^n)\geq F(C_{3,3}^n)$ and equality happens if and only if $m_1=m_2=3.$
\end{corollary}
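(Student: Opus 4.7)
The plan is to iterate Lemma \ref{3uni2} to shrink the cycle sizes one at a time. The hypothesis $m_1+m_2\le n$ is crucial because it forces $k:=n-m_1-m_2+2\ge 2$, meaning the two cycles of $C_{m_1,m_2}^n$ are joined by a genuine edge (or longer path) rather than sharing a vertex; this leaves room to splice in a lollipop at an appropriate pendant location.

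Concretely, I would label the cycle vertices as $a_1,\ldots,a_{m_1}$ on $C_{m_1}$ and $b_1,\ldots,b_{m_2}$ on $C_{m_2}$, and the connecting path as $p_1=a_1,p_2,\ldots,p_k=b_1$. Set $G'=C_{m_1,m_2}^n-\{a_2,\ldots,a_{m_1}\}$ and $u=p_2$. A direct check of degrees and adjacencies shows that $C_{m_1,m_2}^n$ is exactly the graph produced by identifying $u$ with the pendant vertex of $U_{m_1+1,m_1}^l$, since the unique neighbor of that pendant in the lollipop plays the role of $a_1$. So when $m_1\ge 4$, Lemma \ref{3uni2} gives a strictly smaller graph $G_2$ obtained by using $U_{m_1+1,3}^l$ instead. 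An edge count shows the distance in $G_2$ from the new triangle to $C_{m_2}$ is $(m_1-2)+(k-2)=n-m_2-2$, matching the definition of $C_{3,m_2}^n$; hence $G_2\cong C_{3,m_2}^n$ and therefore $F(C_{3,m_2}^n)<F(C_{m_1,m_2}^n)$.

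Since the definition of $C_{m_1,m_2}^n$ is symmetric in its two indices, applying the same reduction on the other side yields $F(C_{3,3}^n)<F(C_{3,m_2}^n)$ whenever $m_2\ge 4$. Chaining the two inequalities gives $F(C_{m_1,m_2}^n)\ge F(C_{3,3}^n)$ with equality if and only if $m_1=m_2=3$. The only mild obstacle is verifying the isomorphism $G_2\cong C_{3,m_2}^n$ (as opposed to some other graph of the same order), which the path-length computation above handles uniformly for the boundary case $k=2$ (where $u=b_1$ lies on $C_{m_2}$) and for the generic case $k\ge 3$ (where $u$ is an internal path vertex).
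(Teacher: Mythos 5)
Your overall strategy is exactly what the paper intends: Corollary \ref{c33} is stated without a separate proof as an immediate consequence of Lemma \ref{3uni2}, applied once to shrink $C_{m_1}$ to a triangle and once (by the symmetry of $C_{m_1,m_2}^n$ in its indices) to shrink $C_{m_2}$, with strictness at each nontrivial step giving the equality case. However, there is an off-by-one in your decomposition. With $G'=C_{m_1,m_2}^n-\{a_2,\ldots,a_{m_1}\}$ the attachment vertex $a_1=p_1$ survives in $G'$ as a pendant neighbour of $u=p_2$, so identifying $p_2$ with the pendant vertex of $U_{m_1+1,m_1}^l$ produces a graph on $(n-m_1+1)+(m_1+1)-1=n+1$ vertices, namely $C_{m_1,m_2}^n$ with a spurious extra leaf at $p_2$ --- the claimed isomorphism fails as written. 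The fix is to delete $a_1$ as well: take $G'=C_{m_1,m_2}^n-\{a_1,a_2,\ldots,a_{m_1}\}$, which is the lollipop $U_{n-m_1,m_2}^l$ with pendant vertex $p_2$ when $k\geq 3$, and is just $C_{m_2}$ with $u=b_1$ when $k=2$ (this is where the hypothesis $m_1+m_2\leq n$ is used, exactly as you say). With that correction the neighbour of the lollipop's pendant genuinely plays the role of $a_1$, the vertex count is $n$, your path-length computation identifying $G_2$ with $C_{3,m_2}^n$ goes through, and the rest of the argument (the $m_1\geq 4$ restriction matching the hypothesis of Lemma \ref{3uni2}, the symmetric second reduction, and the equality analysis) is correct.
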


\begin{theorem}\label{mainp-2}
Let $G$ be a $2$-connected graph on $n\geq 5$ vertices. Then $F(G) \geq F(C_n)$ and the equality holds if and only if $G=C_n$.
\end{theorem}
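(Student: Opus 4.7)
Plan: By Lemma \ref{cycle-F} we have $F(C_n)=n^2+1$, so it suffices to show $F(G)\geq n^2+1$ for every $2$-connected graph $G$ on $n$ vertices, with equality only when $G=C_n$.

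The approach is to partition the connected subgraphs of $G$ into three disjoint classes---single-vertex subgraphs, paths on at least two vertices, and connected subgraphs that are not paths---and to lower-bound each separately. Singletons contribute exactly $n$. For the second class I would invoke Menger's theorem: $2$-connectivity implies that every unordered pair $\{u,v\}$ of distinct vertices of $G$ admits at least two internally vertex-disjoint $u$-$v$ paths, which are distinct path subgraphs. Since every path subgraph on $\geq 2$ vertices has a unique unordered pair of endpoints, summing over pairs gives at least $2\binom{n}{2}=n(n-1)$ subgraphs in this class. For the third class, every $2$-connected graph contains at least one cycle (for instance, the base cycle of an open ear decomposition), and a cycle is not a path, so this class has at least one member. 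Together these estimates give $F(G)\geq n+n(n-1)+1=n^2+1=F(C_n)$.

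To upgrade to strict inequality when $G\neq C_n$, the plan is to exhibit one pair whose path count exceeds $2$. I would use an open ear decomposition $G=C\cup P_1\cup\cdots\cup P_k$, where $C$ is a cycle and each $P_i$ is an ear (a path whose endpoints lie on $C\cup P_1\cup\cdots\cup P_{i-1}$ and whose internal vertices are new). Since $G\neq C_n$ we have $k\geq 1$; let $u,v$ be the endpoints of $P_1$, both of which lie on $C$. Inside the subgraph $C\cup P_1\subseteq G$ there are three pairwise distinct $u$-$v$ paths, namely the two arcs of $C$ joining $u$ and $v$ and the ear $P_1$ itself. Hence the pair $\{u,v\}$ contributes at least $3$ paths to the count in the second class, which therefore has at least $n(n-1)+1$ members, and $F(G)\geq n^2+2>F(C_n)$, as required.

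The only nontrivial point to verify is that $P_1$ really is distinct from both arcs of $C$. In a simple graph this is automatic from the definition of an ear: either $P_1$ has length at least two, in which case its internal vertices lie outside $V(C)$ so it cannot coincide with either arc; or $P_1$ is a single chord $uv\notin E(C)$, which again differs from both arcs of $C$ (each of which is entirely contained in $C$).
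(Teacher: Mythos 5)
Your proof is correct, and it takes a genuinely different route from the paper's. The paper argues via the circumference $g$ of $G$: it takes a longest cycle $C_g$, counts all of its connected subgraphs, and then in three cases ($g=n$, $g=n-1$, $g\leq n-2$) exhibits enough additional connected subgraphs (extra edges on the cycle, paths between off-cycle vertices, and lollipop-type subgraphs built from $C_g$ and a path to an off-cycle vertex) to push the total past $n^2+1$; the final estimate is $F(G)\geq n^2+1+\tfrac{g(n-g)}{2}(g-3)$, whose positivity in the last case needs the observation that a $2$-connected graph on at least five vertices has circumference at least $4$. You instead count uniformly: $n$ singletons, at least $2\binom{n}{2}$ path subgraphs via Menger's theorem (equivalently, any two vertices of a $2$-connected graph lie on a common cycle, whose two arcs are distinct $u$--$v$ paths since a cycle in a simple graph has at least three edges), and one non-path subgraph, namely a cycle; strictness for $G\neq C_n$ comes from the first ear of an open ear decomposition supplying a third $u$--$v$ path for one pair. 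Your argument avoids the case analysis and the somewhat delicate lollipop counting entirely and rests only on classical facts about $2$-connectivity, so it is shorter and more robust; the paper's argument, in exchange, yields the sharper quantitative lower bound in terms of the circumference when $g$ is large. Your care in checking that the ear is distinct from both arcs of $C$ (whether or not it is a single chord) closes the one place where such an argument could silently fail.
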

\begin{proof}
Let $g$ be the circumference of $G$. Let $C_g$ be a $g$-cycle in $G$. Then every connected subgraph of $C_g$ is also a connected subgraph of $G$. 
If $g=n$ and $G$ is not isomorphic to $C_n$, then $G$ has at least $n+1$ edges. In this case clearly $F(G)>F(C_n).$ 

If $g=n-1$ then the number of connected subgraphs of $C_g$ is equal to $(n-1)^2+1.$ Let $v$ be the vertex of $G$ not on the cycle $C_g$ of $G.$ Since $G$ is  $2$-connected, so $v$ is adjacent to at least two vertices of $C_g$. Let $u$ be a vertex of $C_g$ such that $\{u,v\}\in E(G).$ Then $C_g\cup \{u,v\}$ is a connected subgraph of $G.$ By Lemma \ref{cycle-F}, the number of connected subgraphs of $C_g\cup \{u,v\}$ containing $\{u,v\}$ is $2(n-1)+ {n-2\choose 2}.$ So, $F(G)>(n-1)^2+1+2(n-1)+ {n-2\choose 2}>n^2+1=F(C_n).$

If $g\leq n-2$ then at least two vertices of $G$ are not on the cycle $C_g$. Since $G$ is  $2$-connected, so every pair of distinct vertices  $u,v \in V(G)\setminus V(C_g)$ there exists at least two distinct paths in $G$ with $u$ and $v$ as pendant vertices. Each of these paths is a connected subgraph of $G$. Apart from these subgraphs, also for every $v \in V(G)\setminus V(C_g)$ there exists a $w \in V(C_g)$ such that there is a path joining $v$ and $w$. This path together with $C_g$ forms a lolipop subgraph of $G$ with $v$ as a pendant vertex. Thus there are at least $(n-g)(f_{C_g}(w))$ more connected subgraphs in $G$ different from the above mentioned connected subgraphs of $G.$ Thus

\begin{align*}
F(G) &\geq F(C_g)+2{n-g \choose 2}+(n-g)(f_{C_g}(w))\\
&=g^2+1+(n-g)(n-g-1)+(n-g)\left(2g+{g-1 \choose 2}\right)\\
&=n^2+1+{\frac{g(n-g)}{2}}(g-3)> n^2+1=F(C_n).
\end{align*}
The last inequality follows from the fact that $g$ is the circumference of a $2$-connected graph on $n\geq 5$ vertices. Hence the result follows. 
\end{proof}

The above results leads to the  main theorem of the section.
\begin{theorem}\label{mlast}
Let $G\in \mathfrak{H}_{n,0}$. Then 
\begin{equation*}
F(G)
\begin{cases}
\geq F(C_n), &\text{if $n\leq 16$}\\
\geq F(C_{3,3}^n), &\text{if $n> 16$.}
\end{cases}
\end{equation*}
Moreover, $F(G)\geq \min\{n^2+1, \frac{n^2+17n}{2}\}.$ 
\end{theorem}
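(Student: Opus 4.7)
The plan is to stitch together the results of this section by dichotomizing $G$ according to $2$-connectivity and then reading off the threshold via Lemma~\ref{3cycles}. First, if $G$ is $2$-connected, Theorem~\ref{mainp-2} directly yields $F(G) \geq F(C_n) = n^2+1$. Otherwise $G$ has at least one cut-vertex, and I would split into two subcases depending on whether some $C_{m_1,m_2}^n$ with $m_1+m_2-1 = n$ occurs as a subgraph of $G$.

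In the subcase where such a $C_{m_1,m_2}^n$ sits inside $G$, Corollary~\ref{ctwo} immediately gives $F(G) > F(C_n)$. In the complementary subcase, Theorem~\ref{mainp-1} produces integers $g_1, g_2 \geq 3$ with $g_1 + g_2 \leq n$ (the excluded configuration $g_1+g_2 = n+1$ is precisely what the first subcase covers) satisfying $F(G) \geq F(C_{g_1,g_2}^n)$; Corollary~\ref{c33} then amplifies this to $F(G) \geq F(C_{3,3}^n) = \frac{n^2+17n}{2}$. Combining the three cases gives $F(G) \geq \min\{F(C_n),\, F(C_{3,3}^n)\}$ for all $G \in \mathfrak{H}_{n,0}$.

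To turn this minimum into the piecewise statement, I would apply Lemma~\ref{3cycles}, which for $n \geq 6$ says $F(C_n) < F(C_{3,3}^n)$ exactly when $n \leq 16$. Hence for $5 \leq n \leq 16$ the bound $F(G) \geq F(C_n) = n^2+1$ is the binding one, while for $n > 16$ the bound $F(G) \geq F(C_{3,3}^n) = \frac{n^2+17n}{2}$ takes over. The small values $n = 5, 6$ warrant a direct sanity check: for $n=5$ any cut-vertex graph in $\mathfrak{H}_{5,0}$ must contain $C_{3,3}^5$ (the bowtie), so Corollary~\ref{ctwo} applies and $F(G) > F(C_5)$; for $n=6$ the only available $(g_1,g_2)$ in the second subcase is $(3,3)$, and Lemma~\ref{3cycles} shows $F(C_6) < F(C_{3,3}^6)$, so again $F(C_n)$ is the binding bound.

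There is no genuine obstacle here; the argument is essentially an organizational exercise that weaves Theorems~\ref{mainp-1} and~\ref{mainp-2} together with Corollaries~\ref{ctwo} and~\ref{c33} and Lemma~\ref{3cycles}. The only point requiring a little care is verifying the hypothesis $g_1+g_2 \leq n$ when invoking Corollary~\ref{c33}, which is automatic once the first subcase has been separated off, and handling the boundary values $n = 5, 6$ so that the $n \leq 16$ branch of the theorem is covered uniformly.
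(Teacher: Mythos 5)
Your proposal is correct and follows essentially the same route as the paper: split on whether $G$ is $2$-connected, then on whether a spanning $C_{m_1,m_2}^n$ (with $m_1+m_2-1=n$) is present, and invoke Theorem~\ref{mainp-2}, Corollary~\ref{ctwo}, Theorem~\ref{mainp-1} with Corollary~\ref{c33}, and finally Lemma~\ref{3cycles} to decide which bound binds. Your extra care with the boundary cases $n=5,6$ is a welcome addition that the paper's own (very terse) proof omits.
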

\begin{proof}
If $G$ has no cut-vertices, the by Theorem \ref{mainp-2} and Lemma \ref{3cycles} the result follows. Suppose $G$ has a cut-vertex. If $C_{m_1,m_2}^n$ with $m_1+m_2-1=n$ is a subgraph of $G$ then by Corollary \ref{ctwo} and  and Lemma \ref{3cycles} the result follows. If $C_{m_1,m_2}^n$ with $m_1+m_2-1=n$ is not a subgraph of $G$ then by Theorem \ref{mainp-1},Corollary \ref{c33} and Lemma \ref{3cycles}, the result follows.   
\end{proof}

\vskip .5cm

\noindent{\bf Addresses}:\\

\noindent Dinesh Pandey, Kamal Lochan Patra\\

\noindent 1) School of Mathematical Sciences\\
National Institute of Science Education and Research (NISER), Bhubaneswar\\
P.O.- Jatni, District- Khurda, Odisha - 752050, India\medskip

\noindent 2) Homi Bhabha National Institute (HBNI)\\
Training School Complex, Anushakti Nagar\\
Mumbai - 400094, India\medskip

\noindent E-mails: dinesh.pandey@niser.ac.in, klpatra@niser.ac.in\\
\end{document}